\makeatletter\@addtoreset {equation}{section}\makeatother
\newtheorem{theorem}{Theorem}
\newtheorem{proposition}{Proposition}
\newtheorem{lemma}{Lemma}
\newtheorem{remark}{Remark}
\begin{document}

\title[Linear instability of breathers for the NLS equation]{Linear instability of breathers for the focusing nonlinear Schr\"{o}dinger equation}

\author{Mariana Haragus, Dmitry E. Pelinovsky}
\address[M. Haragus]{FEMTO-ST institute, Univ. Bourgogne Franche-Comt\' e, CNRS, 15b avenue des Montboucons, 25030 Besan\c con cedex, France}
\email{mharagus@univ-fcomte.fr}
\address[D.E. Pelinovsky]{Department of Mathematics, McMaster University, Hamilton, Ontario, Canada, L8S 4K1}
\email{dmpeli@math.mcmaster.ca}

\date{\today}

\begin{abstract}
Relying upon tools from the theory of integrable systems, we discuss the linear instability of the Kuznetsov--Ma breathers and the Akhmediev breathers of the focusing nonlinear Schr\"{o}dinger equation. We use the Darboux transformation to construct simultaneously the breathers and the exact solutions of the Lax system associated with the breathers. We obtain a full description of the Lax spectra for the two breathers, including multiplicities of eigenvalues. Solutions of the linearized NLS equations are then obtained from the eigenfunctions and generalized eigenfunctions of the Lax system. While we do not attempt to prove completeness of eigenfunctions, we aim to determine the entire set of solutions of the linearized NLS equations generated by the Lax system in appropriate function spaces. 
\end{abstract}

\maketitle

\section{Introduction}

The focusing nonlinear Schr\"{o}dinger (NLS) equation in the space of 
one dimension is a fundamentally important model which brings together  nonlinearity and dispersion of modulated waves in many physical systems \cite{NLS2,NLS1}. It has been used as the main testbed for rogue 
waves in fluids and optics \cite{Rogue1,Rogue2}, where the rogue waves 
appear from nowhere and disappear without any trace. One of the important 
properties of the focusing cubic NLS equation is its integrability, 
which allows to construct the basic solutions for the rogue waves 
in a closed analytical form. Although these solutions have been constructed 
long ago in the works of Akhmediev {\em et al.} \cite{Nail1}, Kuznetsov \cite{Kuznetsov}, Ma \cite{Ma}, Peregrine \cite{Peregrine}, 
and Tajiri \& Watanabe \cite{TW}
they have been studied a lot in the past few years in physics 
literature \cite{Kibler1,Kibler2}.

To explain the current state of art in the mathematical studies of these 
breather solutions, we set up the stage and take the NLS equation in the following dimensionless form:
\begin{equation}
i \psi_t + \frac{1}{2} \psi_{xx} + |\psi|^2 \psi = 0,
\label{nls}
\end{equation}
where the unknown $\psi$ is a complex-valued function depending on time $t\in\mathbb R$ and space $x\in\mathbb R$.
The NLS equation (\ref{nls}) is invariant under the scaling transformation 
\begin{equation*}
\mbox{\rm if} \; \psi(x,t) \;\; \mbox{\rm is a solution, so is} \; 
c \psi(c x,c^2 t),
\;\; \mbox{\rm for every} \;\; c \in \mathbb{R},
\end{equation*}
and under translations in $x$ and $t$.
Up to these symmetries, the NLS equation (\ref{nls}) admits the following exact solutions on the background of the constant-amplitude wave $\psi(x,t) = e^{it}$:
\begin{itemize}
        \item Akhmediev breather (AB)
\begin{equation}
\label{AB}
\psi(x,t) = \left[ - 1 + \frac{2(1-\lambda^2) \cosh(\lambda k t) + i \lambda k \sinh(\lambda k t)}{\cosh(\lambda k t) - \lambda \cos(kx)} \right] e^{it},
\end{equation}
where $k = 2 \sqrt{1-\lambda^2}$, and $\lambda \in (0,1)$ is the only free parameter.
	\item Kuznetsov--Ma breather (KMB)
\begin{equation}
\label{KM}
\psi(x,t) = \left[ - 1 + \frac{2(\lambda^2-1) \cos(\lambda \beta t) + i \lambda \beta \sin(\lambda \beta t)}{\lambda \cosh(\beta x) - \cos(\lambda \beta t)} \right] e^{it},
\end{equation}
where $\beta = 2 \sqrt{\lambda^2-1}$, and $\lambda \in (1,\infty)$ is the only free parameter.
        \item Peregrine's rogue wave (PRW)
\begin{equation}
\label{P}
\psi(x,t) = \left[ - 1 + \frac{4(1+2it)}{1 + 4(x^2 + t^2)} \right] e^{it}.
\end{equation}
\end{itemize}
Note that PRW can be obtained in the limit $\lambda \to 1$ from either AB or KMB. Also note the formal transformation $k = i \beta$ between AB and KMB. 
The main goal of this work is to study the linear instability of AB and KMB.

Stability of breathers is a challeging question that has been extensively studied in the mathematics literature. A major difficulty comes from the nontrivial dependence of the breathers on time $t$. Therefore, many of the analytical methods developed for stability of stationary or traveling waves in nonlinear partial differential equations do not apply to breathers. For instance, spectral methods are out of reach for AB and PRW which are localized in time $t$. Since KMB is periodic in time $t$, Floquet theory can be used, at least formally, to compute stable and unstable modes of KMB. This has been done numerically in \cite{Cuevas} after KMB were truncated on a spatially periodic domain in $x$. Further studies of KMB in discrete setting of the NLS equation can be found in \cite{Kevr-Salerno}.

Very recently, the authors of \cite{Z} set up a basis for a rigorous investigation of stability of breathers which are periodic in time $t$ and localized in space $x$. Using tools from the theory of semigroups and Fredholm operators, they analyzed properties of the monodromy operator for the linearization of the cubic-quintic complex Ginzburg--Landau equation about such solutions, and computed its essential spectrum. These results being obtained in a dissipative setting do not directly apply to KMB due to the Hamiltonian nature of the NLS equation.

Most of the existing instability results for breathers of the NLS equation strongly rely upon the integrability properties of the NLS equation.
Instability of AB was concluded by using the variational characterization 
of breathers in the energy space in \cite{AFM-2}. Perturbations 
to the AB were considered in the periodic space $H^s_{\rm per}$ for $s > 1/2$. Similar techniques were applied to KMB and PRW in \cite{AFM} (see also the review in \cite{Alejo}). It was shown that both KMB and PRW are unstable with respect 
to perturbations in $H^s(\mathbb{R})$ for $s > 1/2$.

Evolution of KMB and PRW under perturbations 
was studied in \cite{Garn} and \cite{BK}, where inverse scattering transform 
was applied to the NLS equation in the class of functions decaying 
to the nonzero boundary conditions.
Instability of PRW was visualized numerically in \cite{KH} by using time-dependent simulations of the NLS equation. 
Linear instability of PRW was also studied numerically in \cite{CalSch2019}. 
By using perturbation theory for embedded eigenvalues of the Lax system, 
it was shown in \cite{KPR} that the perturbed PRW is transformed 
to either KMB or two counter-propagating breathers, the latter solutions 
were later constructed explicitly in \cite{ZakGelash}.

Our approach to linear instability is closely related to the recent works
\cite{Bil2, CalSch2012,GS-2021}, where solutions of the linearized NLS equation are constructed from solutions of the associated Lax system. Eigenfunctions of the Lax system related to the Lax spectrum provide solutions of the linearized NLS equation relevant for the linear instability of breathers. The completeness of the resulting solution set is a particularly challenging question. In the class of spatially localized functions, it was shown in \cite[Section 3.4]{Bil2} how  to obtain a complete set of solutions of the linearized NLS equation at PRW. Stability of AB 
under periodic perturbations of the same period was stated in \cite{CalSch2012} without the proof of completeness. It was recently discovered in \cite{GS-2021} 
that the set of eigenfunctions constructed in \cite{CalSch2012} is incomplete 
and two unstable modes exist for AB. The spatially periodic unstable modes 
for AB were constructed in \cite{GS-2021} by taking a suitable combination of unbounded solutions of the linearized NLS equation.

The purpose of this paper is twofold. Firstly, we give a full description of the Lax spectra for AB and KMB, including algebraic multiplicities of eigenvalues. Secondly, we obtain all solutions of the linearized NLS equations at AB and KMB generated by eigenfunctions and generalized eigenfunctions of the Lax systems. These solutions are spatially periodic for AB and spatially localized for KMB. The completeness question is outside the scope of this paper and will be the subject of subsequent studies. 

Similar to \cite{Bil2,CalSch2012,GS-2021}, we use the Darboux transformation to obtain AB and KMB from the constant-amplitude wave and then to precisely determine the Lax spectra at AB and KMB from the Lax spectrum at the constant-amplitude wave. For AB we focus on solutions of the linearized NLS equation with the first three spatially periodic Fourier modes, whereas for KMB we focus on spatially localized solutions.

Aiming for a presentation accessible to readers who are not expert in integrable systems, we review some properties of the Lax system and the Darboux transformation in Section~\ref{s prelim}. In Section~\ref{s constant} we consider the constant-amplitude wave. We compute the Lax spectrum and establish the explicit relation between the solutions of the linearized NLS equation obtained by a standard Fourier analysis and the ones generated by  the Lax system. We focus on spatially periodic and spatially localized solutions. Then using the Darboux transformation, we determine the Lax spectra and the resulting solutions of the linearized NLS equations for AB in Section~\ref{s AB} and for KMB in Section~\ref{s KMB}. The paper is concluded 
at Section \ref{sec-conclusion} with a discussion of further directions. 

\medskip

\noindent
{\bf Acknowledgments:}
M. Haragus was partially supported by the project Optimal (ANR-20-CE30-0004) and the EUR EIPHI program (ANR-17-EURE-0002). D. E. Pelinovsky was partially supported by the National Natural Science Foundation of China (No. 11971103).

\section{Preliminaries}\label{s prelim}

We recall the Lax system for the NLS equation (\ref{nls}), its connection with the linearized NLS equation, and the Darboux transformation for the NLS equation and its Lax system.

For our purpose, it is convenient to write $\psi(x,t) = u(x,t) e^{it}$, where $u$ satisfies the normalized NLS equation 
\begin{equation}
i u_t + \frac{1}{2}u_{xx} + (|u|^2 - 1) u = 0.
\label{nls-u}
\end{equation}
The constant-amplitude wave $\psi(x,t) = e^{it}$ of the NLS equation \eqref{nls} becomes $u(x,t) =1$ and the breathers \eqref{AB}, \eqref{KM},  and \eqref{P} provide exact solutions of the normalized equaiton \eqref{nls-u} without the factor $e^{it}$ in these formulas.

\subsection{Lax system}\label{ss lin nls}

The normalized NLS equation \eqref{nls-u} for $u = u(x,t)$ is a compatibility condition $\varphi_{xt} = \varphi_{tx}$ for a $2$-vector $\varphi = \varphi(x,t)$ satisfying the Lax system
\begin{equation}
\label{lax-1}
\varphi_x = U(u,\lambda) \varphi, \quad 
U(u,\lambda) = \left(\begin{array}{cc}
\lambda & u \\
- \bar{u} & -\lambda
\end{array}
\right)
\end{equation}
and
\begin{equation}
\label{lax-2}
\varphi_t = V(u,\lambda) \varphi, \quad 
V(u,\lambda) = i \left(\begin{array}{cc}
\lambda^2 + \frac{1}{2} (|u|^2 - 1) & \lambda u + \frac{1}{2} u_x \\
-\lambda \bar{u} + \frac{1}{2} \bar{u}_x & -\lambda^2 - \frac{1}{2} (|u|^2-1)
\end{array}
\right),
\end{equation}
where $\lambda$ a complex number. The $x$-derivative equation (\ref{lax-1}) is the Zakharov--Shabat (ZS) spectral problem, which is a particular case of the AKNS spectral problem; see pioneering works \cite{ZS} and \cite{AKNS}. 
The $t$-derivative equation (\ref{lax-2}) gives the time evolution 
of the solution $\varphi(x,t)$ of the ZS spectral problem (\ref{lax-1}).

Spatially bounded solutions of the Lax system are referred to as {\em eigenfunctions} and the corresponding values $\lambda$ as {\em eigenvalues}. The set of eigenvalues $\lambda$ form the {\em Lax spectrum} of the ZS spectral problem (\ref{lax-1}). Rigorously, this terminology corresponds to considering the ZS spectral problem in the space $C_b^0(\mathbb R)$ of $x$-dependent functions which are bounded and continuous on~$\mathbb R$. However, depending on the properties of the solution $u = u(x,t)$ to the NLS equation (\ref{nls-u}) other function spaces may be considered as, for instance, the space of $L$-periodic functions $L^2_{\rm per}(0,L)$, or the space of $L$-antiperiodic functions $L^2_{\rm antiper}(0,L)$, or the space of localized functions $L^2(\mathbb{R})$. The choice of the function space affects the nature of the Lax spectrum, as this is usual for spectra of differential operators. For the spaces mentioned above, the Lax spectrum is a purely point spectrum consisting of isolated eigenvalues for $L^2_{\rm per}(0,L)$, or $L^2_{\rm antiper}(0,L)$, whereas it is a purely continuous spectrum, up to possibly a finite number of eigenvalues for $L^2(\mathbb{R})$.

The ZS spectral problem (\ref{lax-1}) can be rewritten as a classical eigenvalue problem
\begin{equation}
\label{eigen}
\left( \mathcal{L} - \lambda I \right) \varphi = 0, \quad 
\mathcal{L}:= \left( \begin{array}{cc} \partial_x & -u \\
-\bar{u} & -\partial_x \end{array} \right). 
\end{equation}
In particular, this allows to define generalized eigenfunctions and algebraic multiplicities of eigenvalues in the usual way by the bounded solutions 
of $(\mathcal{L}- \lambda I)^k \varphi = 0$ for $k \in \mathbb{N}$. 
If $\lambda$ is a double eigenvalue with the only eigenfunction $\varphi$ satisfying (\ref{eigen}), then there exists a generalized eigenfunction $\varphi_g$ satisfying the nonhomogeneous linear equation 
\begin{equation}
\label{eigen-generalized}
\left( \mathcal{L} - \lambda I \right) \varphi_g = \varphi.
\end{equation}
In this case, $\lambda$ has geometric multiplicity {\em one} and algebraic 
multiplicity {\em two}.

\begin{remark}
	\label{remark-symmetry}
Solutions of the Lax equations (\ref{lax-1}) and (\ref{lax-2}) satisfy the following symmetry. If $\varphi = (p,q)^T$ is a solution for $\lambda$, 
then $\varphi = (-\bar{q},\bar{p})^T$ is a solution for $-\bar{\lambda}$.
\end{remark}

Taking a solution $u = u(x,t)$ to the normalized NLS equation (\ref{nls-u}), solutions $v = v(x,t)$ of the corresponding linearized NLS equation 
	\begin{equation}
	i v_t + \frac{1}{2} v_{xx} + (2 |u|^2 - 1) v + u^2 \bar{v} = 0,
	\label{nls-lin}
	\end{equation}
 can be constructed from solutions $\varphi = \varphi(x,t)$ of the Lax system (\ref{lax-1})--(\ref{lax-2}). The following well-known property is a result of a straightforward calculation.

\begin{proposition}
	\label{prop-NLS-lin} Assume $u$ is a solution to the normalized NLS equation (\ref{nls-u}).
If $\varphi = (\varphi_1,\varphi_2)^T$ is a solution 
	to the Lax system (\ref{lax-1})--(\ref{lax-2}) for some $\lambda$, then 
	\begin{equation}
	\label{v-relation}
	v = \varphi_1^2 - \bar{\varphi}_2^2,  \quad \bar{v} = -\varphi_2^2 + \bar{\varphi}_1^2
	\end{equation}
	and
	\begin{equation}
	\label{v-relation-another}
	v = i(\varphi_1^2 + \bar{\varphi}_2^2), \quad \bar{v} = -i(\varphi_2^2 + \bar{\varphi}_1^2) 
	\end{equation}
are solutions to the linearized NLS equation (\ref{nls-lin}).
\end{proposition}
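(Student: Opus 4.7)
The proof is a direct verification that uses only the Lax system \eqref{lax-1}--\eqref{lax-2} and, at most, the NLS equation \eqref{nls-u} satisfied by $u$. I would organize it as follows.

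First, read off the Lax system in scalar form to obtain expressions for $\varphi_{1,x}, \varphi_{2,x}, \varphi_{1,t}, \varphi_{2,t}$ purely algebraically in $\varphi_1, \varphi_2$ (with coefficients depending on $\lambda$, $u$, $u_x$, $|u|^2$), together with the complex conjugate identities, which give $\bar\varphi_{1,x}, \bar\varphi_{2,x}, \bar\varphi_{1,t}, \bar\varphi_{2,t}$ in terms of $\bar\lambda$, $\bar u$, $\bar u_x$.

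Second, for $v = \varphi_1^2 - \bar\varphi_2^2$, compute $v_t = 2\varphi_1\varphi_{1,t} - 2\bar\varphi_2\bar\varphi_{2,t}$ by direct substitution of the $V$-equations. Compute $v_x = 2\varphi_1(\lambda\varphi_1 + u\varphi_2) - 2\bar\varphi_2(-u\bar\varphi_1 - \bar\lambda\bar\varphi_2)$ from the $U$-equations, then differentiate once more in $x$ and apply the $U$-equations a second time to $\varphi_{1,x}, \varphi_{2,x}, \bar\varphi_{1,x}, \bar\varphi_{2,x}$, so that $v_{xx}$ becomes algebraic in $\varphi_1, \varphi_2, \bar\varphi_1, \bar\varphi_2$ with coefficients involving $\lambda, u, u_x, \bar u, \bar u_x, |u|^2$.

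Third, assemble the combination $iv_t + \tfrac{1}{2}v_{xx} + (2|u|^2 - 1)v + u^2\bar v$ and group the result according to the monomials $\varphi_1^2, \bar\varphi_2^2, \varphi_1\varphi_2, \bar\varphi_1\bar\varphi_2$ (the remaining products like $\varphi_1\bar\varphi_1$ should not appear, once the computation is done correctly). The coefficient of each monomial should vanish identically as a polynomial in $\lambda$ and $\bar\lambda$. The $\lambda^2$-contributions from $iv_t$ cancel against those from $\tfrac12 v_{xx}$ for $\varphi_1^2$ and $\bar\varphi_2^2$; the $\lambda$-contributions cancel on the cross terms $\varphi_1\varphi_2$ and $\bar\varphi_1\bar\varphi_2$; the $\lambda$-independent terms cancel against the potential terms $(2|u|^2-1)v + u^2\bar v$. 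No use of the NLS equation itself is needed for the spatial Lax equation plus the $V$-part as written, since the Lax pair is already designed so that compatibility is equivalent to \eqref{nls-u}; the structural identities used above are encoded in that compatibility.

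For the second formula $v = i(\varphi_1^2 + \bar\varphi_2^2)$ I would repeat the same bookkeeping: the sign flip between $\varphi_1^2$ and $\bar\varphi_2^2$ (compared to the first formula) is exactly compensated by the factor of $i$, because the contributions from $iv_t$ versus $\tfrac12 v_{xx}$ carry opposite relative signs between $\varphi_1^2$ and $\bar\varphi_2^2$. Alternatively, one can note that the two formulas span the two-dimensional space of squared-eigenfunction solutions at a given $\lambda$ and check linear independence using Remark~\ref{remark-symmetry}; still, the safest route is the direct check. The only obstacle is the algebraic bookkeeping of many monomial contributions, which is standard in integrable-systems computations and involves no conceptual difficulty.
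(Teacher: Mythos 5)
Your proposal is correct and is essentially the paper's approach: a direct verification in which the Lax equations are used to eliminate all derivatives of $\varphi$ and the resulting expression is checked to vanish monomial by monomial (note only that $\varphi_2^2$ and $\bar{\varphi}_1^2$ also appear before cancelling, via $(\varphi_{1x})^2$ and the potential term $u^2\bar{v}$, in addition to the monomials you list). The paper halves this bookkeeping by first invoking the symmetry of Remark~\ref{remark-symmetry} and linear superposition to reduce to the single check $v=\varphi_1^2$, $\bar{v}=-\varphi_2^2$ with $v$ and $\bar{v}$ treated as independent components of the linearized system; the full formulas \eqref{v-relation} and \eqref{v-relation-another}, in which $\bar{v}$ is genuinely the complex conjugate of $v$, are then assembled from this solution and its image under the symmetry at $-\bar{\lambda}$.
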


\begin{proof}
Due to the symmetry in Remark \ref{remark-symmetry} and the linear superposition principle, it is sufficient 
	to confirm the relations (\ref{v-relation}) and (\ref{v-relation-another}) by using $v = \varphi_1^2$ and $\bar{v} = -\varphi_2^2$. This is obtained directly:
	\begin{eqnarray*}
		&& i v_t + \frac{1}{2} v_{xx} + (2 |u|^2 - 1) v + u^2 \bar{v} \\
		&& = \varphi_1 (2 i \varphi_{1t} + \varphi_{1xx}) + (\varphi_{1x})^2 
		+ (2|u|^2-1) \varphi_1^2 - u^2 \varphi_2^2 \\
		&& = \varphi_1 ((1-|u|^2) \varphi_1 - 2 \lambda^2 \varphi_1 
		-2 \lambda u \varphi_2 - u_x \varphi_2 + \lambda (\lambda \varphi_1 + u \varphi_2) + u_x \varphi_2 + u (-\bar{u} \varphi_1 - \lambda \varphi_2)) \\
		&& \phantom{t} + (\lambda \varphi_1 + u \varphi_2)^2 + 
		(2|u|^2 -1) \varphi_1^2 - u^2 \varphi_2^2 \\
		&& = 0.
	\end{eqnarray*}
Extending the solution by using (\ref{v-relation}) and (\ref{v-relation-another}) ensures that $\bar{v}$ is a complex conjugate of $v$.
\end{proof}

\begin{remark}
	\label{remark-completeness}
Solutions $\varphi = \varphi(x,t)$ to the Lax system (\ref{lax-1})--(\ref{lax-2}) which are bounded functions in $x$ generate bounded solutions $v = v(x,t)$ to the linearized NLS equation (\ref{nls-lin}) by means of the transformations (\ref{v-relation}) and (\ref{v-relation-another}). On the other hand, solutions $\varphi = \varphi(x,t)$ which are unbounded functions in $x$ generate unbounded solutions $v = v(x,t)$ but the linear superposition of unbounded solutions may become bounded \cite{GS-2021}. This latter property must be taken into account when constructing solutions to the linearized NLS equation (\ref{nls-lin}) either in $L^2_{\rm per}(0,L)$ or in $L^2(\mathbb{R})$ by using Proposition \ref{prop-NLS-lin}.
\end{remark}

The result in Proposition~\ref{prop-NLS-lin} can be extended by taking two linearly independent solutions $\varphi = (\varphi_1,\varphi_2)^T$ and $\phi = (\phi_1,\phi_2)^T$ to the Lax system (\ref{lax-1})--(\ref{lax-2}) for the same value of $\lambda$. Then from these two solutions we can construct the three pairs of solutions of the linearized NLS equation (\ref{nls-lin}) given in Table \ref{table-1}. The symmetry of the Lax system in Remark~\ref{remark-symmetry} implies that the solutions of the Lax system for $-\bar\lambda$ lead, up to sign, to the same solutions of the linearized NLS equation~(\ref{nls-lin}).

\begin{table}[hbtp!]
	\begin{center}
		\begin{tabular}{|c|c|c|}
			\hline
		Pair I  & Pair II & Pair III 	\\				 
			\hline
			& & \\
$v = \varphi_1^2 - \bar{\varphi}_2^2$ & $v = \varphi_1 \phi_1 - \bar{\varphi}_2 \bar{\phi}_2$ & $v = \phi_1^2 - \bar{\phi}_2^2$	\\
			\hline
				& & \\
			$v = i \varphi_1^2 + i \bar{\varphi}_2^2$ & $v = i \varphi_1 \phi_1  + i \bar{\varphi}_2 \bar{\phi}_2$ & $v = i\phi_1^2 + i \bar{\phi}_2^2$	\\
			\hline
		\end{tabular}
        \end{center}
\vspace*{1ex}        
        \caption{Table of possible solutions of the linearized NLS equation (\ref{nls-lin}) generated from two solutions $\varphi = (\varphi_1,\varphi_2)^T$ and $\phi = (\phi_1,\phi_2)^T$  to the Lax system (\ref{lax-1})--(\ref{lax-2}) for the same value of $\lambda$.}
	\label{table-1}
\end{table}

\begin{remark}
	\label{remark-double}
	If $\lambda$ is a double eigenvalue with the only eigenfunction 
	$\varphi = (\varphi_1,\varphi_2)^T$ satisfying (\ref{eigen}) 
	and the generalized eigenfunction 
	$\varphi_g = (\varphi_{g1},\varphi_{g2})^T$ satisfying  (\ref{eigen-generalized}), then the linearized NLS equation 
	(\ref{nls-lin}) admits the solutions 
\begin{equation}
\label{solutions-generalized}
	v = 2 \varphi_1 \varphi_{g1} - 2 \bar{\varphi}_2 \bar{\varphi}_{g2}, \quad
	v = 2 i \varphi_1 \varphi_{g1} + 2 i \bar{\varphi}_2 \bar{\varphi}_{g2},
\end{equation}
	 in addition to the two solutions in Pair I of Table \ref{table-1}.
\end{remark}

\subsection{Darboux transformation}

For the construction of breathers, we use the following version of the one-fold Darboux transformation from \cite[Propositions 2.2 and 3.1]{ContPel}. 

\begin{proposition}
  \label{prop-Darboux}
Assume that $u = u_0(x,t)$ is a solution to the normalized NLS equation (\ref{nls-u}) and pick $\lambda_0 \in \mathbb{C}$. If $\varphi = (p_0,q_0)^T$ is a particular solution of the Lax system (\ref{lax-1})--(\ref{lax-2}) with $u = u_0$ and $\lambda = \lambda_0$, then
\begin{equation}
\label{DT-potential}
\hat{u}_0 = u_0 + \frac{2(\lambda_0 + \bar{\lambda}_0) p_0 \bar{q}_0}{|p_0|^2 + |q_0|^2}
\end{equation}
is a solution to the normalized NLS equation (\ref{nls-u}) and
  $\varphi = (\hat{p}_0,\hat{q}_0)^T$ with
\begin{equation}
\label{DT-eigen}
\left[ \begin{array}{l} \hat{p}_0 \\ \hat{q}_0 \end{array} \right] = \frac{\lambda_0 + \bar{\lambda}_0}{|p_0|^2 + |q_0|^2} \left[ \begin{array}{l} -\bar{q}_0 \\ \bar{p}_0\end{array} \right]
\end{equation}
 is a particular solution of the Lax system (\ref{lax-1})--(\ref{lax-2}) with $u = \hat{u}_0$ and $\lambda = \lambda_0$. Furthermore, the following identity holds:
\begin{equation}
\label{DT-squared}
|\hat{u}_0|^2 = |u_0|^2 + \frac{\partial^2}{\partial x^2} \log(|p_0|^2 + |q_0|^2).
\end{equation}
\end{proposition}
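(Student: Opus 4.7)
The plan is to verify all three claims by direct computation, exploiting both the structure of the Darboux transformation and the symmetry stated in Remark \ref{remark-symmetry}. A useful preliminary observation is that, by that remark, the vector $(-\bar q_0, \bar p_0)^T$ is already a solution of the Lax system with potential $u_0$ at spectral parameter $-\bar\lambda_0$, so $\hat\varphi$ defined by (\ref{DT-eigen}) is just a rescaling of it by the real factor $(\lambda_0 + \bar\lambda_0)/(|p_0|^2 + |q_0|^2)$. The nontrivial content of the proposition is that this rescaling converts it into a solution of the Lax system with the \emph{new} potential $\hat u_0$ and the \emph{shifted} parameter $\lambda_0$.

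First, I would establish the Lax claim for $\hat\varphi$. The approach is to differentiate $\hat p_0$ and $\hat q_0$ in $x$ using the Leibniz rule, eliminate $p_{0,x}$ and $q_{0,x}$ via the $x$-Lax equations $p_{0,x} = \lambda_0 p_0 + u_0 q_0$ and $q_{0,x} = -\bar u_0 p_0 - \lambda_0 q_0$ (and their conjugates), and check that the result matches $\lambda_0 \hat p_0 + \hat u_0 \hat q_0$ and $-\bar{\hat u}_0 \hat p_0 - \lambda_0 \hat q_0$ respectively, with $\hat u_0$ given by (\ref{DT-potential}). A key intermediate identity, coming directly from the ZS equations, is
\[
\partial_x (|p_0|^2 + |q_0|^2) = (\lambda_0 + \bar\lambda_0)(|p_0|^2 - |q_0|^2),
\]
which controls the derivative of the denominator in (\ref{DT-eigen}). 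The analogous computation based on the $t$-Lax equations (\ref{lax-2}) verifies the time evolution.

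Second, to show that $\hat u_0$ solves the normalized NLS equation (\ref{nls-u}), the cleanest route is conceptual: the computation above in fact produces a Darboux matrix $D(\lambda)$ that intertwines the Lax operators at $u_0$ and $\hat u_0$ for \emph{all} $\lambda$, not only at $\lambda = \lambda_0$. The compatibility condition $\varphi_{xt} = \varphi_{tx}$ for the transformed Lax pair is then precisely the NLS equation for $\hat u_0$. Alternatively, one may substitute (\ref{DT-potential}) directly into (\ref{nls-u}), eliminate all derivatives of $p_0$ and $q_0$ via the Lax equations (\ref{lax-1})--(\ref{lax-2}), and verify the resulting polynomial identity.

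Finally, for the identity (\ref{DT-squared}), setting $\rho := |p_0|^2 + |q_0|^2$, the formula for $\rho_x$ above gives
\[
\partial_x^2 \log \rho = \frac{\rho_{xx}}{\rho} - \frac{\rho_x^2}{\rho^2},
\]
and a further application of the Lax equations yields $\rho_{xx}$ explicitly. On the other hand, $|\hat u_0|^2$ is computed from (\ref{DT-potential}) and shown to equal $|u_0|^2$ plus the same expression. The main obstacle throughout is the algebraic bookkeeping: every claim reduces to a polynomial identity in $p_0, \bar p_0, q_0, \bar q_0, u_0, \bar u_0, \lambda_0, \bar\lambda_0$ once derivatives are eliminated, and organizing the many cross-terms so that the cancellations are transparent is the real practical challenge. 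A convenient device is to rewrite $\hat u_0$ and $\hat\varphi$ in terms of the rank-one matrix $P = (p_0, q_0)^T (\bar p_0, \bar q_0)/\rho$, which reduces each verification to a short matrix computation.
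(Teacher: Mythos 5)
Your proposal is correct in strategy, but note that the paper itself does not prove Proposition~\ref{prop-Darboux}: it is imported verbatim from \cite[Propositions 2.2 and 3.1]{ContPel}, so there is no in-text proof to compare against. Your direct-verification route is the standard one and it does go through. In particular, your key identity
\[
\partial_x\bigl(|p_0|^2+|q_0|^2\bigr)=(\lambda_0+\bar\lambda_0)\bigl(|p_0|^2-|q_0|^2\bigr)
\]
follows immediately from (\ref{lax-1}) and is exactly what makes the denominator in (\ref{DT-eigen}) manageable; I checked that with it the first component of the $x$-equation for $\hat\varphi$ closes (the cross terms cancel using $|p_0|^2+|q_0|^2-2|p_0|^2=-(|p_0|^2-|q_0|^2)$), and the same identity yields (\ref{DT-squared}) after one more differentiation, since $\rho_{xx}/\rho-\rho_x^2/\rho^2$ reproduces precisely the two extra terms in $|\hat u_0|^2-|u_0|^2$. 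The one place where you are too quick is the second step: verifying the Lax system for $\hat\varphi$ at the single value $\lambda=\lambda_0$ does not by itself ``produce'' the intertwining relation $D_x+DU(u_0,\lambda)=U(\hat u_0,\lambda)D$ for all $\lambda$; that is a separate (finite) family of identities obtained by matching powers of $\lambda$ and the pole at $\lambda_0$, and it is this stronger statement that feeds the zero-curvature argument for $\hat u_0$. Since you also offer the direct-substitution alternative, this is a presentational shortcut rather than a gap, but in a written-out proof you should either establish the intertwining identically in $\lambda$ or carry out the substitution of (\ref{DT-potential}) into (\ref{nls-u}) explicitly.
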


\begin{remark}
	By the symmetry in Remark \ref{remark-symmetry}, $\varphi = (-\bar{q}_0,\bar{p}_0)^T$ is a solution of the Lax system (\ref{lax-1})--(\ref{lax-2}) with $u = u_0$ and $\lambda = -\bar{\lambda}_0$, 
	whereas $\varphi = (-\bar{\hat{q}}_0,\bar{\hat{p}}_0)^T$ is a solution of the Lax system for $u = \hat{u}_0$ and $\lambda = -\bar{\lambda}_0$.
\end{remark}

\begin{remark}
The result in Proposition~\ref{prop-Darboux} provides new solutions to the normalized NLS equation (\ref{nls-u}), and to the associated Lax system (\ref{lax-1})--(\ref{lax-2}), when $\lambda_0+\bar\lambda_0\not=0$, i.e., when $\lambda_0$ is not purely imaginary. When 
$\lambda_0+\bar\lambda_0=0$, it gives the same solution $\hat u_0=u_0$ to the normalized NLS equation (\ref{nls-u}) and the trivial solution $\varphi = (0,0)^T$ to the Lax system (\ref{lax-1})--(\ref{lax-2}). Breathers are found by taking $u_0=1$ and positive values $\lambda_0$: $\lambda_0\in(0,1)$ for AB, $\lambda_0\in(1,\infty)$ for KMB, and $\lambda_0=1$ for PRW. 
\end{remark}

In addition to the Darboux transformation $u_0 \mapsto \hat{u}_0$ in Proposition~\ref{prop-Darboux}, we have a Darboux transformation 
$\Phi(\lambda) \mapsto \hat{\Phi}(\lambda)$ between solutions of the Lax system 
(\ref{lax-1})--(\ref{lax-2}). More precisely, assuming that $\Phi(\lambda)$ is a $2\times2$ matrix solution to the Lax system with $u = u_0$, then 
\begin{equation}
\label{fund-matrix}
\hat{\Phi}(\lambda) = D(\lambda) \Phi(\lambda) 
\end{equation}
is a $2\times2$ matrix solution to the Lax system with $u = \hat{u}_0$ 
if $\lambda \neq \{ \lambda_0,-\bar{\lambda}_0\}$, 
where the Darboux matrix $D(\lambda)$ is given by
\begin{equation}
\label{DT-matrix}
D(\lambda) := I + \frac{1}{\lambda - \lambda_0} \left[ \begin{array}{l} \hat{p}_0 \\ \hat{q}_0 \end{array} \right] \left[ -q_0 \;\; p_0 \right] 
\end{equation}
and $I$ stands for the $2\times2$ identity matrix. Since 
$$
\det D(\lambda) = \frac{\lambda+\bar\lambda_0}{\lambda-\lambda_0}, 
$$
the matrix $D(\lambda)$ is invertible, and the correspondence between the $2\times2$ matrix solutions  $\Phi(\lambda)$ and  $\hat\Phi(\lambda)$ is one-to-one, when $\lambda \neq \{ \lambda_0,-\bar{\lambda}_0\}$.

\section{Constant-amplitude background}\label{s constant}

Here we discuss the simple case of the constant solution $u=1$. We determine the Lax spectrum and compare the set of solutions of the linearized NLS equation
	\begin{equation}
i v_t + \frac{1}{2} v_{xx} + v + \bar{v} = 0
\label{nls-lin-constant}
        \end{equation}
obtained using standard tools of Fourier analysis with the one given by the solutions of the Lax system \eqref{lax-1}--\eqref{lax-2}. This comparison will be useful in the study of linear instability of AB and KMB in Sections \ref{s AB} and \ref{s KMB} respectively.

\subsection{Lax spectrum}

Since the solution $u=1$ is constant, the Lax system (\ref{lax-1})--(\ref{lax-2}) can be solved explicitly. 
Two linearly independent solutions exist for every 
$\lambda$ since the Lax system (\ref{lax-1})--(\ref{lax-2}) is of the second order. We only consider real and purely imaginary values of $\lambda$ because the solutions found for the other complex values $\lambda$ are unbounded.
  
For $\lambda \in \mathbb{R}_+$, two solutions to the Lax equations (\ref{lax-1})--(\ref{lax-2}) are given by:
\begin{equation}
\label{eigenvectors-constant-background}
\varphi = \left[ \begin{array}{l} \sqrt{\lambda - \frac{i}{2} k(\lambda)} \\ 
- \sqrt{\lambda + \frac{i}{2} k(\lambda)} \end{array} \right] e^{-\frac{1}{2} i k(\lambda) (x + i\lambda t)}, \quad 
\phi = \left[ \begin{array}{l} \sqrt{\lambda + \frac{i}{2} k(\lambda)} \\ 
- \sqrt{\lambda - \frac{i}{2} k(\lambda)} \end{array} \right] e^{+\frac{1}{2} ik(\lambda) (x + i\lambda t)}, 
\end{equation}
where $k(\lambda) := 2 \sqrt{1-\lambda^2}$.
These solutions are bounded for $\lambda \in (0,1]$ and are linearly independent for $\lambda \neq 1$, that is, for  $k(\lambda)\neq 0$. For $\lambda=1$, two linearly independent solutions are given by
\begin{equation}
\label{eigenvectors-constant-background-zero}
\lambda = 1 : \quad 
\varphi = \left[ \begin{array}{l} 1\\ 
-1 \end{array} \right], \quad 
\phi = \left[  \begin{array}{l} x+it + 1 \\ -x-it \end{array} \right].
\end{equation}
Solutions for $\lambda\in\mathbb R_-$, and in particular, for $\lambda \in [-1,0)$, are found from the symmetry property of the Lax equations in Remark~\ref{remark-symmetry}. This implies that any $\lambda\in(-1,0)\cup(0,1)$ is a geometrically double eigenvalue, whereas $\lambda=\pm1$ are geometrically simple. 

For $\lambda = i \gamma$ with $\gamma \in \mathbb{R}$, two solutions to the Lax equations (\ref{lax-1})--(\ref{lax-2}) are given by:
\begin{equation}
\label{eigenvectors-constant-background-gamma}
\varphi = \left[ \begin{array}{l} \sqrt{\frac{1}{2} k(\gamma) - \gamma} \\ 
- i \sqrt{\frac{1}{2} k(\gamma) + \gamma} \end{array} \right] e^{-\frac{1}{2} i k(\gamma) (x- \gamma t)}, \quad 
\phi = \left[ \begin{array}{l} \sqrt{\frac{1}{2} k(\gamma) + \gamma} \\ 
i \sqrt{\frac{1}{2} k(\gamma) - \gamma} \end{array} \right] e^{+\frac{1}{2} ik(\gamma) (x- \gamma t)}, 
\end{equation}
where $k(\gamma) := 2 \sqrt{1 + \gamma^2}$.
These solutions are bounded and linearly independent for every $\gamma \in \mathbb R_+$. Solutions for $\gamma \in \mathbb R_-$ are found from the symmetry property of the Lax equations in Remark~\ref{remark-symmetry}. Consequently, any $\lambda = i \gamma$ with $\gamma \in \mathbb{R} \backslash \{0\}$ is a geometrically double eigenvalue.

For $\lambda = 0$ ($\gamma = 0$), there are two linearly independent solutions,
\begin{equation}
\label{eigenvectors-constant-background-two}
\lambda = 0 : \quad \varphi = \left[ \begin{array}{c} 1\\ 
-i \end{array} \right] e^{-i x}, \quad 
\phi = \left[  \begin{array}{c} 1 \\ i \end{array} \right] e^{+ix},
\end{equation}
implying that $\lambda=0$ is a geometrically double eigenvalue. In contrast to the eigenvalues above, the eigenvalue $\lambda = 0$ has algebraic multiplicity four because the bounded solutions of $\mathcal{L}^2 \varphi = 0$ are spanned by (\ref{eigenvectors-constant-background-two}) and two additional solutions 
\begin{equation}
\label{eigenvectors-constant-background-two-generalized}
\lambda = 0 : \quad \varphi_{\rm g} = \left[ \begin{array}{c} t\\ 
-1 - i t \end{array} \right] e^{-i x}, \quad 
\phi_{\rm g} = \left[ \begin{array}{c} -t \\ 
-1 - it \end{array} \right] e^{+i x}.
\end{equation}

These computations are summarized in the following lemma, where 
we have also checked algebraic multiplicities of all eigenvalues.

\begin{lemma}
	\label{lemma-spectrum}
The Lax spectrum of the spectral problem (\ref{lax-1}) with $u = 1$ in the space $C_b^0(\mathbb{R})$ of bounded continuous functions is the set
\begin{equation}
	\label{spectrum-constant}
	\Sigma_0 =  i\mathbb{R} \cup [-1,1],
\end{equation}
and the following properties hold:
\begin{enumerate}
\item $\lambda = \pm 1$  are algebraically simple eigenvalues;
\item each $\lambda \in \Sigma_0 \backslash \{0,\pm 1\}$ is a geometrically and algebraically  double eigenvalue;
\item $\lambda = 0$ is an eigenvalue with geometric multiplicity two and algebraic multiplicity four.
\end{enumerate}
\end{lemma}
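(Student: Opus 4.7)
My plan is to exhibit a complete set of bounded eigenfunctions and generalized eigenfunctions by combining a constant-coefficient ODE analysis with a variation-of-parameters calculation on $\mathbb{R}$. The statement is almost read off from the explicit formulas (\ref{eigenvectors-constant-background})--(\ref{eigenvectors-constant-background-two-generalized}) given above, so the main task is to argue that (i) these formulas exhaust the bounded solutions of $(\mathcal{L}-\lambda I)\varphi=0$, and (ii) the algebraic multiplicities in $C_b^0(\mathbb{R})$ are exactly as claimed.

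For (i), the matrix $U(1,\lambda)$ is constant with eigenvalues $\pm\sqrt{\lambda^2-1}$, so bounded solutions of $\varphi_x=U(1,\lambda)\varphi$ on $\mathbb{R}$ exist if and only if these eigenvalues lie on $i\mathbb{R}$, that is $\lambda^2-1\leq 0$, which is precisely $\lambda\in i\mathbb{R}\cup[-1,1]$. At $\lambda=\pm 1$ the matrix has a nontrivial Jordan block at $0$, so only the eigenvector direction is bounded (geometric multiplicity one); at every other point of $\Sigma_0$ both eigendirections yield bounded oscillations (geometric multiplicity two). This matches the displayed formulas and uses the symmetry in Remark~\ref{remark-symmetry} to transfer the analysis from $\lambda\in(0,1]$ to $\lambda\in[-1,0)$ and from $\gamma>0$ to $\gamma<0$.

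For (ii), I would rewrite $\mathcal{L}=\sigma\partial_x+M$ with $\sigma=\mathrm{diag}(1,-1)$ and $M=\bigl(\begin{smallmatrix}0&-1\\-1&0\end{smallmatrix}\bigr)$, so that $(\mathcal{L}-\lambda I)\varphi_g=\varphi$ becomes $\varphi_{g,x}=U(1,\lambda)\varphi_g+\sigma\varphi$. Letting $\Phi$ be a fundamental matrix of the homogeneous system, variation of parameters gives $\varphi_g=\Phi c$ with $c'=\Phi^{-1}\sigma\varphi$, and a bounded $\varphi_g$ exists if and only if the mean of $\Phi^{-1}\sigma\varphi$ vanishes (otherwise $c$ grows linearly in $x$ and the unbounded term cannot be absorbed into the homogeneous piece). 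A direct computation using (\ref{eigenvectors-constant-background}) or (\ref{eigenvectors-constant-background-gamma}) shows that this mean is nonzero at every point of $\Sigma_0\setminus\{0,\pm 1\}$ (proportional to $\lambda$ in the real case and to $\gamma$ in the imaginary case), so the algebraic multiplicity equals the geometric one, namely two. At $\lambda=\pm 1$ the fundamental solutions already carry linear growth and an analogous calculation rules out a bounded $\varphi_g$, giving algebraic multiplicity one. At $\lambda=0$ the mean of $\Phi^{-1}\sigma\varphi$ is exactly zero---a genuine non-resonance---which upon integration yields the two bounded generalized eigenfunctions displayed in (\ref{eigenvectors-constant-background-two-generalized}); repeating the construction one level higher with $\varphi_g$ as right-hand side, the corresponding $\Phi^{-1}\sigma\varphi_g$ now has a nonzero mean and hence no bounded $\psi$ solves $\mathcal{L}\psi=\varphi_g$, so the Jordan chain terminates and the algebraic multiplicity is exactly four.

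The main subtle point is the $\lambda=0$ case, where one must verify both the existence of bounded generalized eigenfunctions and the termination of the Jordan chain at length two. The exceptional structure at the origin reflects the algebraic cancellation of the two homogeneous frequencies $\pm k(0)/2=\pm 1$ against the conjugation-type rotation by $\sigma$ in the forcing, which eliminates the resonance that is present at every other point of $\Sigma_0$; elsewhere the argument is a uniform resonance computation, and the only bookkeeping to be careful about is that each of the two linearly independent eigenfunctions in (\ref{eigenvectors-constant-background})--(\ref{eigenvectors-constant-background-two}) must be tested separately as a right-hand side so that the algebraic multiplicity count accounts for both Jordan chains.
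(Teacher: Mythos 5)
Your argument is correct and reaches all three conclusions of the lemma, but it implements the two main steps differently from the paper, so a brief comparison is worthwhile. For the location of the spectrum and the geometric multiplicities, the paper simply exhibits the bounded solutions (\ref{eigenvectors-constant-background})--(\ref{eigenvectors-constant-background-two}) and asserts that all other $\lambda$ give unbounded solutions, whereas you derive $\Sigma_0=i\mathbb{R}\cup[-1,1]$ from the eigenvalues $\pm\sqrt{\lambda^2-1}$ of the constant matrix $U(1,\lambda)$; this closes the ``only if'' direction that the paper leaves implicit and identifies $\lambda=\pm1$ as the Jordan-block points in one stroke. For the algebraic multiplicities, the paper passes to $L^2_{\rm per}(0,L)$ and checks the Fredholm condition through pairings of the eigenfunctions with the adjoint eigenfunctions $\varphi^*=(\bar\varphi_2,\bar\varphi_1)^T$, while you test for secular growth in the variation-of-parameters formula, i.e., whether the mean of $\Phi^{-1}\sigma\varphi$ vanishes. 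These are the same condition in different clothing: the rows of $\Phi^{-1}$ solve the transposed homogeneous system, so the mean of $\Phi^{-1}\sigma\varphi$ over a period is, up to the symmetry of the ZS system, exactly the pairing of the right-hand side with the adjoint kernel. Your observation that each eigenfunction resonates only with its own exponential mode (so the cross-means vanish and the two Jordan chains decouple) matches the paper's statement that the cross pairings vanish, and the outcomes you assert for the deferred computations are the correct ones: the resonant coefficient is proportional to $\lambda$ on $(-1,1)\setminus\{0\}$ and to $\gamma$ on $i\mathbb{R}\setminus\{0\}$, it cancels exactly at $\lambda=0$ (yielding the generalized eigenfunctions (\ref{eigenvectors-constant-background-two-generalized}) and a nonzero mean at the next level, hence two chains of length two and total multiplicity four), and at $\lambda=\pm1$ the forced solution acquires quadratic growth in $x$ that no homogeneous solution can absorb. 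The only thing missing is the explicit evaluation of these means, which is routine; I would write out at least the $\lambda\in(0,1)$ and $\lambda=0$ cases to make the resonance coefficients concrete.
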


\begin{proof}
	Geometric multiplicity of all eigenvalues has been checked with direct computations resulting in (\ref{eigenvectors-constant-background}), (\ref{eigenvectors-constant-background-zero}), (\ref{eigenvectors-constant-background-gamma}), and (\ref{eigenvectors-constant-background-two}). 
In order to check the algebraic multiplicity of eigenvalues, 
we note that for each eigenvalue $\lambda$, 
the bounded eigenfunctions $\varphi$ and $\phi$ in $C^0_b(\mathbb{R})$ are periodic in $x$ with some spatial period $L$. For 
the algebraic multiplicity of $\lambda$, we need to solve $(\mathcal{L} - \lambda I) \varphi_g = \varphi$ and $(\mathcal{L} - \lambda I) \phi_g = \phi$ in the space of periodic functions with the same period $L$. Consequently, we can check the Fredholm condition in $L^2(0,L)$ equipped with the standard inner product $\langle \cdot, \cdot \rangle$.

Let $\varphi = (\varphi_1,\varphi_2)$ be the bounded eigenfunction of the eigenvalue problem $(\mathcal{L}- \lambda I) \varphi = 0$. By the symmetry, the adjoint problem $\left( \mathcal{L}^* - \bar{\lambda} I \right) \varphi^* = 0$ admits the eigenfunction $\varphi^* = (\bar{\varphi}_2,\bar{\varphi}_1)^T$. If $\lambda \in \Sigma_0 \backslash \{+1,-1\}$, there exists another linearly independent eigenfunction $\phi = (\phi_1,\phi_2)^T$, for which we have similarly $\phi^* = (\bar{\phi}_2,\bar{\phi}_1)^T$. Since 
$\langle \psi^*,\varphi \rangle = \langle \varphi^*, \phi \rangle = 0$, the generalized eigenfunctions $\varphi_g$ and $\phi_g$ 
exist if and only if $\langle \varphi^*, \varphi \rangle = 0$ and $\langle \phi^*, \phi \rangle = 0$.

For $\lambda \in (0,1)$, we obtain 
	\begin{eqnarray*}
\langle \varphi^*, \varphi \rangle = -2 \lambda L e^{\lambda k(\lambda) t}, \quad 
\langle \phi^*, \phi \rangle = -2 \lambda L e^{-\lambda k(\lambda) t},
	\end{eqnarray*}
which are both nonzero for $\lambda \neq 0$. For $\lambda = 1$, 
only one linearly independent eigenfunction $\varphi$ in  (\ref{eigenvectors-constant-background-zero}) exists and we 
check that $\langle \varphi^*,\varphi \rangle = -2 L \neq 0$.
For $\lambda = i \gamma$ with $\gamma \in \mathbb{R}$, we obtain 
	\begin{eqnarray*}
	\langle \varphi^*, \varphi \rangle = -2 i \gamma L e^{i \gamma k(\gamma) t}, \quad 
	\langle \phi^*, \phi \rangle = -2 i \gamma L e^{-i \gamma k(\gamma) t},
\end{eqnarray*}
which are both nonzero for $\gamma \neq 0$.
Hence, the algebraic multiplicity of all nonzero eigenvalues is equal to their geometric multiplicity.

For the eigenvalue $\lambda=0$ with the eigenfunctions (\ref{eigenvectors-constant-background-two}), we obtain $\langle \varphi^*, \varphi \rangle = \langle \phi^*, \phi \rangle = 0$, in agreement with the existence of the generalized eigenfunctions (\ref{eigenvectors-constant-background-two-generalized}). On the other hand, 
we also have 
$$
\langle \varphi^*, \varphi_g \rangle = -L, \quad 
\langle \phi^*, \phi \rangle = -L,
$$
which implies that no new generalized eigenfunctions satisfying $\mathcal{L}^3 \varphi = 0$ exist. Hence, the zero eigenvalue has algebraic multiplicity equal to four.	
\end{proof}


Replacing the space $C_b^0(\mathbb{R})$ by $L^2(\mathbb R)$ in Lemma~\ref{lemma-spectrum} the Lax spectrum does not change, the difference being  that $\Sigma_0$ becomes a purely continuous spectrum in $L^2(\mathbb R)$. In the space $L^2_{\rm per}(0,L)$ of $L$-periodic functions, the Lax spectrum only contains the eigenvalues $\lambda\in\Sigma_0$ with $L$-periodic associated eigenfunctions, hence the purely point spectrum is located at 
\begin{equation}
\label{spectrum-constant-periodic}
 \Sigma_0^{(P)}= \{ \pm \lambda^{(P)}_m, \;\; m \in \{0,\mathbb{N}_{\rm even}\}\}, \quad
\lambda^{(P)}_m := \sqrt{1-\frac{\pi^2}{L^2} m^2}.
\end{equation}   
Similarly, in the space $L^2_{\rm antiper}(0,L)$ of $L$-antiperiodic functions, the Lax spectrum only contains the eigenvalues $\lambda\in\Sigma_0$ with $L$-antiperiodic associated eigenfunctions, hence the purely point spectrum is located at 
\begin{equation}
\label{spectrum-constant-antiperiodic}
 \Sigma_0^{(A)}= \{ \pm \lambda^{(A)}_m, \;\; m \in \mathbb{N}_{\rm odd}\}, \quad
\lambda^{(A)}_m := \sqrt{1-\frac{\pi^2}{L^2} m^2}.
\end{equation}
The algebraic and geometric multiplicities of these eigenvalues remain the same, as given by Lemma~\ref{lemma-spectrum}. Notice that $\lambda=0$ is an eigenvalue only for particular periods $L\in\pi\mathbb N$.

Figure \ref{fig-Lax} illustrates these results. The left panel shows the purely continuous spectrum of $\Sigma_0$ in $L^2(\mathbb{R})$ given by (\ref{spectrum-constant}). The right panel shows the union $\Sigma_0^{(P)} \cup \Sigma_0^{(A)}$ of the purely point spectra in $L^2_{\rm per}(0,L)$ and $L^2_{\rm antiper}(0,L)$ given by (\ref{spectrum-constant-periodic}) and (\ref{spectrum-constant-antiperiodic}), respectively.

\begin{figure}[htb]
	\begin{center}
		\includegraphics[width=6cm,height=4cm]{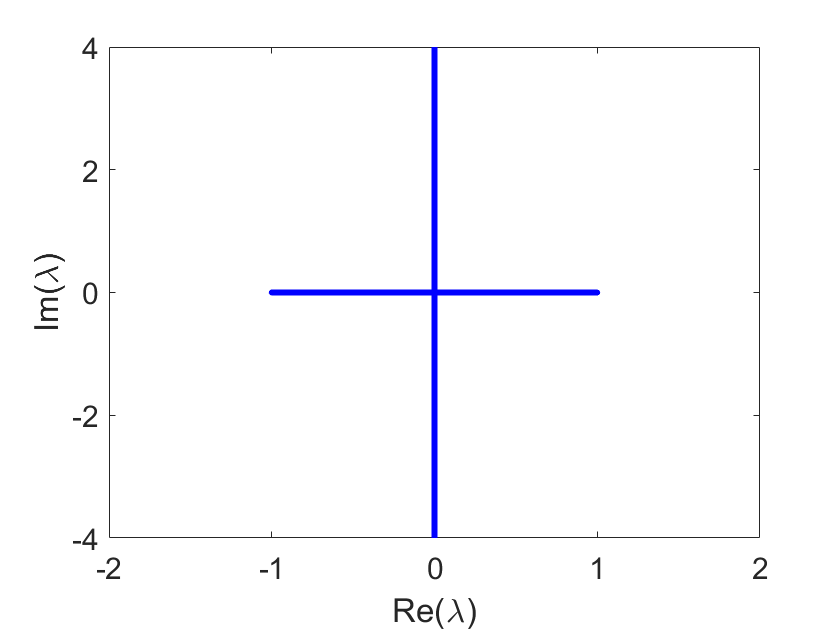}
		\includegraphics[width=6cm,height=4cm]{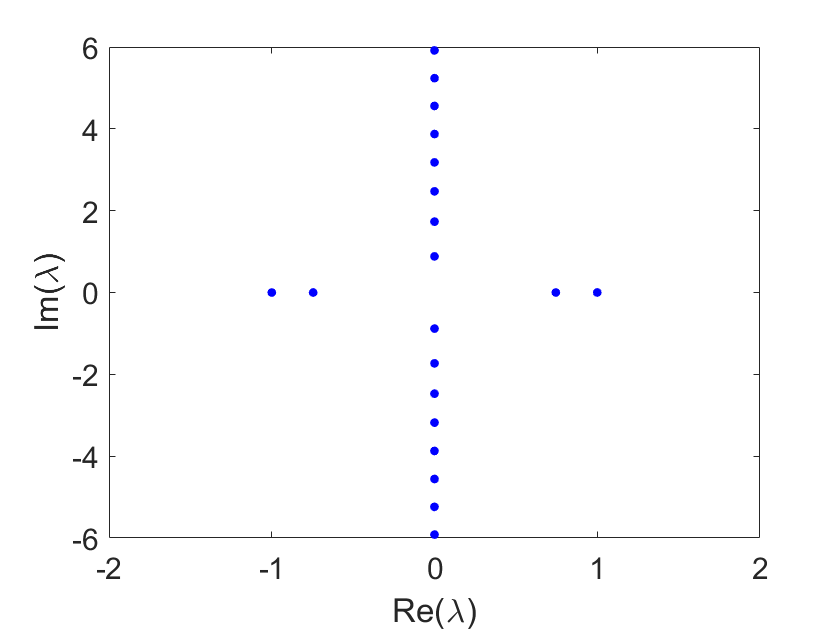}
	\end{center}
	\caption{Left: The Lax spectrum $\Sigma_0$ in $L^2(\mathbb{R})$. 
		Right: The union  $\Sigma_0^{(P)} \cup \Sigma_0^{(A)}$ of the Lax spectra in $L^2_{\rm per}(0,L)$ and $L^2_{\rm antiper}(0,L)$ for some positive $L\notin\pi\mathbb N$.}
	\label{fig-Lax}
\end{figure}

\subsection{Localized solutions}

Since the linearized NLS equation \eqref{nls-lin-constant} has constant coefficients, the Fourier transform provides a basis of bounded solutions in $x$ which can be used to represent a general solution in $L^2(\mathbb{R})$. 
The following proposition gives the result.

\begin{proposition}
\label{theorem-localized solutions}
For every $v_0 \in L^2(\mathbb{R})$, there exists a unique solution 
$v \in C^0(\mathbb{R},L^2(\mathbb{R}))$ to the linearized NLS equation (\ref{nls-lin-constant}) satisfying $v(x,0) = v_0(x)$ in the form of a linear superposition
\begin{equation}\label{basis Fourier}
	v(x,t)=\int_0^\infty \left[c_k^+ \widetilde v_k^+(t) + c_k^- \widetilde v_k^-(t)\right] \cos(kx)dk + 
	\int_0^\infty\left[ d_k^+ \widetilde v_k^+(t) + d_k^- \widetilde v_k^-(t)\right]\sin(kx)dk,
\end{equation}
where coefficients $c_k^\pm$ and $d_k^\pm$ are uniquely found from $v_0 \in L^2(\mathbb{R})$, and the functions ${\widetilde v}_k^{\pm}(t)$ for $k \geq 0$ are given as follows:
\begin{eqnarray}
\label{vk0}
k = 0 : & \quad & \left\{ \begin{array}{l} \widetilde v_0^+(t)= 2i,\\ \widetilde v_0^-(t)= 1+2it,\end{array} \right. \\
\label{vkpm}
k \in (0,2) : & \quad & \left\{ \begin{array}{l} \widetilde v_k^+(t)=(2i\lambda+k)e^{k\lambda t},\\
\widetilde v_k^-(t)=(2i\lambda-k)e^{-k\lambda t},\end{array} \right. 
\quad \lambda=\lambda(k)=\frac{1}{2} \sqrt{4 - k^2}, \\
\label{vk2}
k = 2 : & \quad & \left\{ \begin{array}{l} \widetilde v_2^+(t)= 2,\\ 
\widetilde v_2^-(t)= i+2t, \end{array} \right. \\
\label{vkpm2}
k \in (2,\infty) : & \quad & \left\{ \begin{array}{l} \widetilde v_k^+(t)=k\cos(k\gamma t)-2i\gamma\sin(k\gamma t), \\
\widetilde v_k^-(t)=2i\gamma\cos(k\gamma t)+k\sin(k\gamma t), \end{array} \right.
\quad \gamma=\gamma(k)=\frac{1}{2} \sqrt{k^2 - 4}.
\end{eqnarray}
\end{proposition}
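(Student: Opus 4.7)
The plan is to exploit the constant-coefficient nature of \eqref{nls-lin-constant} by diagonalizing in $x$. Because the equation is not $\mathbb{C}$-linear due to the $\bar v$ term, I would first write $v=v_1+iv_2$ with real $v_1,v_2$ and split \eqref{nls-lin-constant} into its real and imaginary parts, obtaining the real system
\begin{equation*}
v_{1,t}=-\tfrac{1}{2}v_{2,xx},\qquad v_{2,t}=\tfrac{1}{2}v_{1,xx}+2v_1.
\end{equation*}
Taking cosine and sine transforms in $x$ reduces this, for each $k\ge 0$, to the $2\times 2$ ODE system
\begin{equation*}
\frac{d}{dt}\begin{pmatrix}\widehat v_1\\ \widehat v_2\end{pmatrix}
=A(k)\begin{pmatrix}\widehat v_1\\ \widehat v_2\end{pmatrix},\qquad
A(k):=\begin{pmatrix}0 & k^2/2\\ 2-k^2/2 & 0\end{pmatrix},
\end{equation*}
whose characteristic roots are $\pm\sigma(k)$ with $\sigma(k)^2=\tfrac{k^2}{4}(4-k^2)$.

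The next step is a case analysis on the sign of $4-k^2$. For $k\in(0,2)$ the roots are real, equal to $\pm k\lambda(k)$, with eigenvectors $(k,\pm 2\lambda)^T$, which recombine through $v_1+iv_2$ into the two exponentials in \eqref{vkpm}. For $k>2$ the roots are purely imaginary, $\pm ik\gamma(k)$, and the real and imaginary parts of the complex orbit $(k,2i\gamma)^T e^{ik\gamma t}$ reproduce the trigonometric solutions in \eqref{vkpm2}. At the transition values $k=0$ and $k=2$ the matrix $A(k)$ is nilpotent with a single Jordan block of size two; the eigenvector together with a generalized eigenvector produces one constant and one linear-in-$t$ solution, matching \eqref{vk0} and \eqref{vk2} after a convenient rescaling. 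In every case one checks directly that $\widetilde v_k^+(0)$ and $\widetilde v_k^-(0)$, viewed as vectors in $\mathbb{R}^2$ via their real and imaginary parts, are linearly independent (the four corresponding determinants are $4k\lambda$, $2k\gamma$, $-2$, $2$ respectively).

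With the basis in hand, I would construct the solution $v\in C^0(\mathbb R;L^2(\mathbb R))$ as follows. Given $v_0\in L^2(\mathbb R)$, expand it in the usual cosine/sine Fourier representation
\begin{equation*}
v_0(x)=\int_0^\infty\bigl[\alpha(k)\cos(kx)+\beta(k)\sin(kx)\bigr]dk,
\end{equation*}
with complex-valued $\alpha,\beta\in L^2(\mathbb R_+)$. The matching at $t=0$ determines $c_k^{\pm}$ and $d_k^{\pm}$ uniquely as bounded linear functionals of $\alpha(k)$ and $\beta(k)$, by the linear independence verified above; substituting them into \eqref{basis Fourier} defines the candidate solution. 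Term-by-term differentiation then confirms that $v$ solves \eqref{nls-lin-constant}, and uniqueness in $C^0(\mathbb R;L^2(\mathbb R))$ follows because any such solution must have Fourier coefficients governed by the same $2\times 2$ ODE, which has a unique trajectory for every initial condition.

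The one point that requires care, and which I would regard as the main technical obstacle, is ensuring that $v(\cdot,t)\in L^2(\mathbb R)$ and that $t\mapsto v(\cdot,t)$ is continuous with values in $L^2$, in spite of the exponentially growing modes present for $k\in(0,2)$. Because this unstable band of wavenumbers is bounded and the growth rate $k\lambda(k)=\tfrac{k}{2}\sqrt{4-k^2}$ itself is bounded by $1$, one has a uniform estimate of the form $|\widetilde v_k^{\pm}(t)|\le C(1+k)e^{|t|}$ for every $k\ge 0$ and $t\in\mathbb R$; combining it with Plancherel's theorem transfers the $L^2$ bound for $\alpha,\beta$ at $t=0$ to an $L^2$ bound for $v(\cdot,t)$ at each $t$, and dominated convergence yields the continuity in $t$.
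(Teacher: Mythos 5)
Your proof is correct and follows essentially the same route as the paper's: separation of variables/Fourier analysis in $x$ reducing the constant-coefficient equation to a $t$-ODE for each wavenumber $k\geq 0$, with the case analysis on the sign of $4-k^2$ and the degenerate values $k=0,2$; the paper simply works with the scalar $\mathbb{R}$-linear ODE $i\widetilde v_k' + (1-\tfrac{k^2}{2})\widetilde v_k + \overline{\widetilde v}_k = 0$ rather than your equivalent real $2\times2$ system. The paper's proof is a two-line appeal to ``straightforward computations'' and Fourier theory, so your version is in fact more detailed, including the uniform-in-$k$ bound needed for the $L^2$-continuity in $t$, which the paper leaves implicit.
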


\begin{proof}
	The proof is based on separation of variables and straightforward computations. Indeed, substituting $v(t,x) = \widetilde v_k(t) e^{ikx}$ into (\ref{nls-lin-constant}) yields the linear differential equation 
	\[
	i \frac d{dt} \widetilde v_k +\left(1-\frac{k^2}2\right) \widetilde v_k +\overline{\widetilde v}_k = 0,
	\]
	with two linearly independent solutions $\widetilde v_k^+(t)$ and $\widetilde v_k^-(t)$ 
	given by (\ref{vk0}), (\ref{vkpm}), (\ref{vk2}), and (\ref{vkpm2}) for different values of $k \geq 0$. Completeness of the basis of bounded 
	functions in $L^2(\mathbb{R})$ is given by the Fourier theory.
\end{proof}

\begin{remark}\label{spectrum bounded}
Since the points $k = 0$ and $k = 2$ are of measure zero in the integral (\ref{basis Fourier}) we actually do not need the solutions for $k=0$ and $k=2$. However these solutions play a role when the space $L^2(\mathbb R)$ is replaced by the space $L^2_{\rm per}(0,L)$ of $L$-periodic functions.
\end{remark}

\begin{remark}
From the Fourier decomposition \eqref{basis Fourier} we can determine the spectrum of the linear operator from the linearized NLS equation (\ref{nls-lin-constant}), when acting in the space $L^2(\mathbb{R})$. We find 
that the purely continuous spectrum is located at 
$\{\pm k\lambda(k) \;;\; k\geq0\} = i\mathbb R \cup[-1,1]$. This implies the spectral instability of $u=1$ in the linearized NLS equation (\ref{nls-lin-constant}).
\end{remark}

It follows from Proposition \ref{prop-NLS-lin} that solutions of the linearized NLS equation \eqref{nls-lin-constant} can be constructed from solutions of the Lax equations (\ref{lax-1})--(\ref{lax-2}) with $u = 1$ and $\lambda\in\mathbb C$.
We show below how to recover the Fourier basis in the decomposition \eqref{basis Fourier} from the eigenfunctions associated to the Lax spectrum in Lemma~\ref{lemma-spectrum}. We use the three pairs of solutions given in Table~\ref{table-1}.

\vspace*{1.5ex}

{\bf Pair~II of Table~\ref{table-1}.} Using $\varphi$ and $\phi$ in either \eqref{eigenvectors-constant-background} or \eqref{eigenvectors-constant-background-gamma},  
we obtain the same constant solutions
\begin{equation}\label{v0+}
v=0,\quad \widetilde v_0^+(t)=2i,
\end{equation}
where $\widetilde v_0^+$ is the same as in (\ref{vk0}). Using $\varphi$ and $\phi$ from \eqref{eigenvectors-constant-background-zero}, we find the solutions
\begin{equation}\label{v0-}
{\widetilde v}_0^-(t) = 1 + 2it,\quad v(x) = i(2x+1),
\end{equation}
where $\widetilde v_0^-$ is the same as in (\ref{vk0}). The two bounded solutions in the Fourier decomposition \eqref{basis Fourier} with $k=0$ are recovered.

\vspace*{1.5ex}

{\bf Pairs~I and III of Table~\ref{table-1}.}
Using the eigenfunction $\varphi$ from \eqref{eigenvectors-constant-background-zero} associated to the simple eigenvalue $\lambda=1$ we obtain the solutions from \eqref{v0+}, again. By the symmetry of the Lax system in Remark~\ref{remark-symmetry}, the solutions obtained for $\lambda=-1$ are, up to sign, the same.

Next, using $\varphi$ and $\phi$ in \eqref{eigenvectors-constant-background}  for $\lambda\in(0,1)$,  we find the following four linearly independent bounded solutions:
\begin{equation}
  \label{Fourier-basis-I}
  v_\lambda^{+}(x,t) = -(2i \lambda + k) e^{\lambda k t}  \sin(kx) ,
  \quad
v_{-\lambda}^{+}(x,t) = (2i \lambda + k) e^{\lambda k t} \cos(k x),
\end{equation}
and
\begin{equation}
\label{Fourier-basis-III}
v_\lambda^{-}(x,t) = (2i \lambda - k) e^{-\lambda k t} \sin(k x), \quad
v_{-\lambda}^{-}(x,t) = (2i \lambda - k) e^{-\lambda k t} \cos(k x),
\end{equation}
in which $k=k(\lambda)\in(0,2)$. These are, up to sign, equal to the four solutions in the Fourier decomposition \eqref{basis Fourier} given by \eqref{vkpm} so that we have a one-to-one correspondence between the solutions provided by the Lax system with $\lambda\in(0,1)$ and the solutions in \eqref{basis Fourier} with $k\in(0,2)$ through the equalities $k=k(\lambda)$ and $\lambda=\lambda(k)$. By the symmetry of the Lax system in Remark~\ref{remark-symmetry}, the solutions obtained for $\lambda=(-1,0)$ are, up to sign, the same.

Using $\varphi$ and $\phi$ in \eqref{eigenvectors-constant-background-gamma}  for $\lambda=i\gamma$ with $\gamma\in\mathbb R_+$, we only find two linearly independent solutions
\begin{equation}
\label{Fourier-basis-1-new}
\begin{array}{l}
v_{\lambda}^+(x,t) =  k\cos( k x- k\gamma t) + 2 i \gamma \sin( kx - k\gamma t),\\
v_{-\lambda}^+(x,t) =  -2 i \gamma \cos(kx -k \gamma t) + k\sin( kx-k\gamma t),
\end{array}
\end{equation}
in which $k=k(\gamma) \in (2,\infty)$. However, using $\varphi$ and $\phi$ in \eqref{eigenvectors-constant-background-gamma} with $-\gamma$ instead of $\gamma$, 
we obtain other two linearly independent solutions, 
\begin{equation}
\label{Fourier-basis-2-new}
\begin{array}{l}
v_{\lambda}^-(x,t) = k\cos (kx+ k\gamma t) - 2 i \gamma \sin (kx + k\gamma t), \\
v_{-\lambda}^-(x,t) = 2 i \gamma \cos (kx +k \gamma t) + k \sin (kx +k \gamma t).
\end{array}
\end{equation}
Solutions (\ref{Fourier-basis-1-new}) and (\ref{Fourier-basis-2-new}) are linear combinations of the four solutions in the Fourier decomposition \eqref{basis Fourier} with $k\in(2,\infty)$ given by \eqref{vkpm2}, and we have a one-to-one correspondence between these solutions through the equalities $k = k(\gamma)$ and $\gamma = \gamma(k)$.

Finally, using $\varphi$ and $\phi$ in \eqref{eigenvectors-constant-background-two}  for $\lambda=0$, we obtain two linearly independent solutions
\begin{equation}\label{Fourier 0}
v_0^+(x,t) = -2 \sin(2x),\quad 
v_{-0}^+(x,t) = 2 \cos(2x).
\end{equation}
These recover the two solutions with $k=2$ in the Fourier decomposition \eqref{basis Fourier} corresponding to $\widetilde v_2^+$ in \eqref{vk2}. 
In order to recover the two solutions given by $\widetilde v_2^-$ in (\ref{vk2}), we use (\ref{solutions-generalized}) with the eigenfunctions (\ref{eigenvectors-constant-background-two}) and the generalized eigenfunctions (\ref{eigenvectors-constant-background-two-generalized}) to obtain
\begin{equation}\label{Fourier 0 gen}
v_0^-(x,t) = 2 (i + 2t) \cos(2x) - 2 \sin(2x),\quad 
v_{-0}^-(x,t) = 2 (i + 2t) \sin(2x) + 2 \cos(2x).
\end{equation}
Using (\ref{solutions-generalized}) with $\phi$ and $\phi_g$ produces 
the same solutions as (\ref{Fourier 0 gen}) up to the change of signs. 
Solutions (\ref{Fourier 0}) and (\ref{Fourier 0 gen}) for $\lambda = 0$ recover the four solutions in the Fourier decomposition \eqref{basis Fourier} given by (\ref{vk2}) for $k=2$.

Summarizing, the set of eigenfunctions of the Lax equations (\ref{lax-1})--(\ref{lax-2}) with $u = 1$ and $\lambda \in \Sigma_0$ allows us to recover the Fourier basis in the decomposition \eqref{basis Fourier}, except for the two functions $\widetilde v_2^-(t)\cos(2x)$ and $\widetilde v_2^-(t)\sin(2x)$ with $k=2$. The entire basis is recovered when also using the generalized eigenfunctions (\ref{eigenvectors-constant-background-two-generalized})
associated to the eigenvalue $\lambda=0$. This leads to an alternative expansion for solutions $v \in C^0(\mathbb{R},L^2(\mathbb{R}))$ to the linearized NLS equation (\ref{nls-lin-constant}),
\begin{equation}\label{basis Fourier 2}
	v(x,t)=\int_0^\infty \left[c_k^+  v_{\lambda(k)}^+(x,t) + c_k^-v_{\lambda(k)}^-(x,t)  + c_{-k}^+  v_{-\lambda(k)}^+(x,t) + c_{-k}^-v_{-\lambda(k)}^-(x,t)\right] dk,
\end{equation}
where coefficients $c_{\pm k}^\pm$ are uniquely defined from the initial condition $v(\cdot,0)=v_0 \in L^2(\mathbb R)$, and $v_{\pm\lambda(k)}^\pm(x,t)$ are given by  \eqref{Fourier-basis-I}--\eqref{Fourier-basis-III} if $k\in(0, 2)$ and by \eqref{Fourier-basis-1-new}--\eqref{Fourier-basis-2-new} if $k\in(2,\infty)$. Since the points $k = 0$ and $k = 2$ are of measure zero in the integral (\ref{basis Fourier 2}) we do not need 
solutions (\ref{v0+}), (\ref{v0-}), (\ref{Fourier 0}), and (\ref{Fourier 0 gen}).

\begin{remark}\label{localized solutions}
Since the solutions for $k = 0$ and $k = 2$ are not used in the expansion (\ref{basis Fourier 2}), the solutions found from Pair~II of Table~\ref{table-1} and from the eigenvalues $\lambda=0$ and $\lambda=\pm1$ play no role in the dynamics of localized perturbations on the background of $u = 1$. In particular, linearly growing in $t$ solutions play no role in this dynamics.
All relevant solutions are obtained using the eigenfunctions of the Lax system for $\lambda\in\Sigma_0\setminus\{0,\pm1\}$ in Pairs I and III of Table \ref{table-1}. 
\end{remark}

\subsection{Periodic solutions}\label{ss periodic1}

Solutions of the linearized NLS equation \eqref{nls-lin-constant} in the space $L^2_{\rm per}(0,L)$ of periodic functions with the fundamental period $L>0$ are found by restricting the continuous Fourier decomposition (\ref{basis Fourier}) to the discrete values 
\begin{equation}
\label{k-m}
k_m := \frac{2\pi m}{L}, \quad 
m \in \mathbb{N}_0 := \{ 0, \mathbb{N}\}.
\end{equation}
This leads to a decomposition in Fourier series
\begin{equation}\label{basis Fourier per}
	v(x,t)=\sum_{m\in\mathbb N_0} \left[c_{k_m}^+ \widetilde v_{k_m}^+(t) + c_{k_m}^- \widetilde v_{k_m}^-(t)\right] \cos({k_m}x) + 
	\sum_{m\in\mathbb N}\left[ d_{k_m}^+ \widetilde v_{k_m}^+(t) + d_{k_m}^- \widetilde v_{k_m}^-(t)\right]\sin({k_m}x),
\end{equation}
where coefficients $c_{k_m}^\pm$ and $d_{k_m}^\pm$ are uniquely found from the initial condition $v(\cdot,0)=v_0 \in L^2_{\rm per}(0,L)$, and the functions ${\widetilde v}_{k_m}^{\pm}(t)$ are given by \eqref{vk0}--\eqref{vkpm2}.

We obtain an equivalent decomposition using the eigenfunctions of the Lax system. For the Lax system we have to consider both $L$-periodic and $L$-antiperiodic solutions, because the solutions of the linearized NLS equation \eqref{nls-lin-constant} are constructed using squares of solutions of the Lax system. 

The Lax spectra $\Sigma_0^{(P)}$ in $L^2_{\rm per}(0,L)$ and $ \Sigma_0^{(A)}$ in $L^2_{\rm antiper}(0,L)$ are given in \eqref{spectrum-constant-periodic} and \eqref{spectrum-constant-antiperiodic}, respectively.
 For notational simplicity we set $\lambda(k_m)=\lambda^{(P)}_m$, if $m$ is even, and $\lambda(k_m)=\lambda^{(A)}_m$, if $m$ is odd, so that $\Sigma_0^{(P)} \cup \Sigma_0^{(A)} = \{\pm\lambda(k_m), \; m\in\mathbb N_0\}$.
  The arguments above show that all functions in the Fourier series \eqref{basis Fourier per} are recovered from the eigenfunctions of the Lax system associated to the eigenvalues $\lambda\in\Sigma_0^{(P)} \cup \Sigma_0^{(A)}$. Indeed,
for $m=0$ we have the eigenvalues $\pm\lambda(0)=\pm1\in\Sigma_0^{(P)}$ leading to the solutions $\widetilde v_0^+$ and $\widetilde v_0^-$ given by \eqref{v0+} and \eqref{v0-}, respectively, which are constant in $x$. If $0<\pi m < L$, then $\lambda(k_m)\in(0,1)$, and we have the four linearly independent solutions in \eqref{Fourier-basis-I}--\eqref{Fourier-basis-III} with $\lambda= \lambda(k_m)$. If $\pi m > L$, then $\lambda(k_m) = i \gamma(k_m)$ is purely imaginary, and we have the four linearly independent solutions in \eqref{Fourier-basis-1-new}--\eqref{Fourier-basis-2-new} with $\gamma= \gamma(k_m)$. In the particular case $L=\pi m$, for some $m\in\mathbb N$, we have $\lambda(k_m)=0$ and  four linearly independent solutions are given in \eqref{Fourier 0} and \eqref{Fourier 0 gen}.

As a consequence, an arbitrary solution of the linearized NLS equation (\ref{nls-lin-constant}) in $L^2_{\rm per}(0,L)$ can be written in the series form:
\begin{eqnarray}
  \nonumber
  v(x,t)&=& c_0^+ \widetilde v_0^+(t) + c_0^- \widetilde v_0^-(t) \\
&&  + \sum_{m \in \mathbb{N}}\left[ c_m^+ v_{\lambda(k_m)}^+(x,t) + c_m^- v_{\lambda(k_m)}^-(x,t) + c_{-m}^+ v_{-\lambda(k_m)}^+(x,t) + c_{-m}^- v_{-\lambda(k_m)}^-(x,t)\right]\!, \qquad\
\label{v-arbitrary-constant}
\end{eqnarray}
where coefficients $c_{\pm m}^\pm$ are uniquely defined from the initial condition $v(\cdot,0)=v_0 \in L^2_{\rm per}(0,L)$, and  $v_{\pm\lambda(k_m)}^\pm(x,t)$ are given by  \eqref{Fourier-basis-I}--\eqref{Fourier-basis-III} if $0<\pi m < L$,  by \eqref{Fourier 0}--\eqref{Fourier 0 gen} if $\pi m =L$, and by \eqref{Fourier-basis-1-new}--\eqref{Fourier-basis-2-new} if $\pi m > L$. 

\begin{remark}\label{r periodic}
When $L\notin \pi\mathbb N$, the functions $v_{\pm\lambda(k_m)}^\pm(x,t)$ in the decomposition \eqref{v-arbitrary-constant} are all obtained from the eigenfunctions associated to nonzero eigenvalues $\pm \lambda(k_m)$. When $L=\pi m$, the eigenvalues $\pm \lambda(k_m)$ vanish and the associated eigenfunctions only provide the two linearly independent solutions \eqref{Fourier 0}. The generalized eigenfunctions associated to the eigenvalue $\lambda(k_m)=0$ must be used in this case to obtain the other two solutions in \eqref{Fourier 0 gen}.
\end{remark}

\section{Akhmediev breather (AB)}
\label{s AB}

By using the Darboux transformation in Proposition~\ref{prop-Darboux},
we obtain AB from the constant solution $u = 1$. We describe 
the associate Lax spectrum in Section~\ref{ss AB Lax} and construct periodic solutions of the linearized NLS equation in Section~\ref{ss AB LNLS}.

Let $\lambda_0 \in (0,1)$ and define 
the particular solution $\varphi = (p_0,q_0)^T$ 
of the Lax system (\ref{lax-1})--(\ref{lax-2})
 with $u = 1$ and $\lambda = \lambda_0$:
\begin{equation}
\label{AB-eigen}
\left\{ \begin{array}{l} 
\displaystyle
p_0(x,t) = \sqrt{\lambda_0 - \frac{i}{2} k_0} \; e^{\frac{1}{2} (-ik_0 x + \sigma_0 t)} - \sqrt{\lambda_0 + \frac{i}{2} k_0} \; 
e^{\frac{1}{2} (ik_0 x - \sigma_0 t)}, \\
\displaystyle
q_0(x,t) = -\sqrt{\lambda_0 + \frac{i}{2} k_0} \; e^{\frac{1}{2} (-i k_0 x + \sigma_0 t)} + \sqrt{\lambda_0 - \frac{i}{2} k_0} \; 
e^{\frac{1}{2} (i k_0 x - \sigma_0 t)},
\end{array}
\right.
\end{equation}
where $k_0 = 2 \sqrt{1-\lambda_0^2} \,\in (0,2)$ and $\sigma_0 = \lambda_0 k_0$. 
Elementary computations give 
\begin{eqnarray*}
&&|p_0|^2 + |q_0|^2 = 4 \left[ \cosh(\sigma_0 t) - \lambda_0 \cos(k_0 x) \right], \\
&&|p_0|^2 - |q_0|^2 = 2 k_0 \sin(k_0 x), \\
&& p_0 \bar{q}_0 = 2 \cos(k_0 x) -2  \lambda_0 \cosh(\sigma_0 t) + i k_0 \sinh(\sigma_0 t). 
\end{eqnarray*}
and the one-fold Darboux transformation (\ref{DT-potential}) yields the formula for AB:
\begin{equation}
\label{AB-u}
\hat{u}_0(x,t) = - 1 + \frac{2(1-\lambda_0^2) \cosh(\sigma_0 t) + i \sigma_0 \sinh(\sigma_0 t)}{\cosh(\sigma_0 t) - \lambda_0 \cos(k_0 x)}.
\end{equation}
The AB solution $\hat u_0$ is $L$-periodic in $x$ with $L = 2\pi/k_0 \,>\pi$ and 
\[
\lim_{t \to \pm \infty} \hat{u}_0(x,t) = 1 - 2 \lambda_0^2 \pm i k_0 \lambda_0 
= 
\left(\sqrt{1-\lambda_0^2} \pm i \lambda_0 \right)^2,
\]
from which it follows that $\lim\limits_{t \to \pm \infty} |\hat{u}_0(x,t)| = 1$.
The complementary transformation (\ref{DT-squared}) gives 
\begin{equation*}
| \hat{u}_0(x,t)|^2 = 1 + \lambda_0 k_0^2 \frac{\cosh(\sigma_0 t) \cos(k_0 x) - \lambda_0}{\left[ \cosh(\sigma_0 t) - \lambda_0 \cos(k_0 x)\right]^2},
\end{equation*}
which is consistent with the exact solution (\ref{AB-u}).

\subsection{Lax spectrum at AB}
\label{ss AB Lax}

For the Lax system (\ref{lax-1})--(\ref{lax-2}), we consider both $L$-periodic and $L$-antiperiodic eigenfunctions $\varphi = \varphi(x,t)$ in $x$. We use the Darboux transformation (\ref{fund-matrix}) and the result of Lemma \ref{lemma-spectrum} to determine the Lax spectrum for AB, which is illustrated in  Figure~\ref{fig-Lax-2}.

\begin{figure}[htb]
	\begin{center}
		\includegraphics[width=8cm,height=6cm]{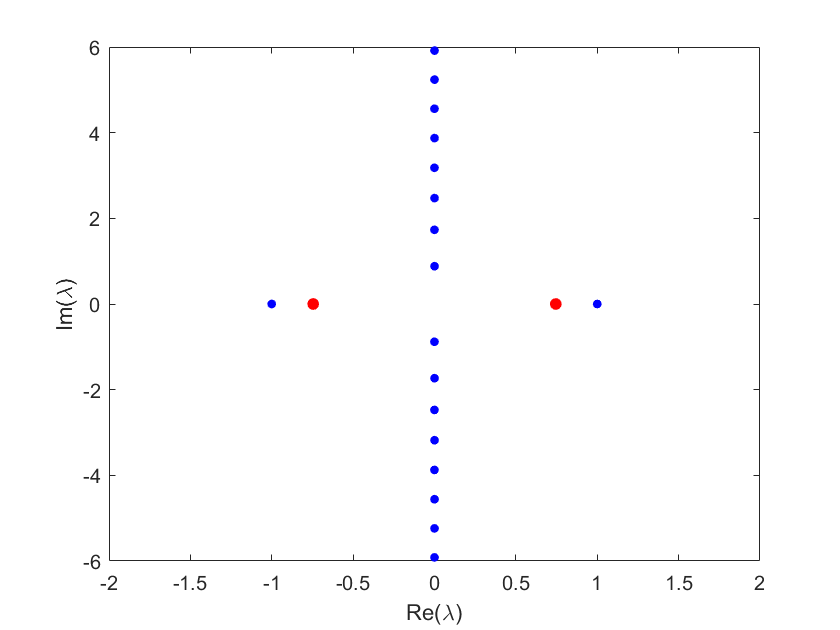}
	\end{center}
	\caption{The union  $\Sigma_{AB}^{(P)} \cup \Sigma_{AB}^{(A)}$ of the Lax spectra in $L^2_{\rm per}(0,L)$ and $L^2_{\rm antiper}(0,L)$ for AB. The red dots represent the eigenvalues $\{ +\lambda_0, -\lambda_0 \}$.}
	\label{fig-Lax-2}
\end{figure}

In the space of $L$-antiperiodic functions, we show that Lax spectrum of AB consists of the same eigenvalues (\ref{spectrum-constant-antiperiodic}) as for the constant-amplitude solution $u = 1$. The only difference between the two spectra is that the eigenvalues $\{ +\lambda_0, -\lambda_0 \}$ are geometrically double for $u = 1$, while they are geometrically simple and algebraically double for $u = \hat{u}_0$. 

\begin{lemma}\label{lax antiper}
  Consider AB given by (\ref{AB-u}) and assume $L \notin \pi \mathbb{N}_{\rm odd}$. The spectrum of the ZS spectral problem (\ref{lax-1}) with $u = \hat{u}_0$ 
  in $L^2_{\rm antiper}(0,L)$ consists of isolated eigenvalues
\[
 \Sigma_{\rm AB}^{(A)}= \{ \pm \lambda^{(A)}_m, \;\; m \in \mathbb{N}_{\rm odd}\}, \quad
\lambda^{(A)}_m := \sqrt{1-\frac{\pi^2}{L^2} m^2}, 
\]
with the following properties:
\begin{enumerate}
\item For each $m\in\mathbb{N}_{\rm odd}$, $m\not=1$, the eigenvalues $\pm\lambda^{(A)}_m$ are geometrically and algebraically double.

\item The eigenvalues $\lambda^{(A)}_1 = \lambda_0$ and $-\lambda^{(A)}_1 = -\lambda_0$ are geometrically simple and algebraically double with associated eigenfunctions $\varphi = (\hat{p}_0,\hat{q}_0)^T$ and $\varphi = (-\bar{\hat{q}}_0,\bar{\hat{p}}_0)^T$ and generalized eigenfunctions $\varphi_g = (\varphi_{1,1},\varphi_{1,2})^T$ and  $\varphi_g = (-\bar\varphi_{1,2},\bar\varphi_{1,1})^T$, where $\varphi_0 = (\hat{p}_0,\hat{q}_0)^T$ and $\varphi_1 =(\varphi_{1,1},\varphi_{1,2})^T$ are given by \eqref{norming-factor} and \eqref{gen-eigenvector} in Appendix \ref{a hatphi}.
\end{enumerate}
\end{lemma}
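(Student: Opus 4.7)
The plan is to transport the Lax spectrum from the constant background $u = 1$ (Lemma~\ref{lemma-spectrum}) to $u = \hat u_0$ via the Darboux correspondence \eqref{fund-matrix}, treating $\lambda = \pm \lambda_0$ separately since $D(\lambda)$ is singular there. A preliminary observation is that $D(\lambda)$ defined in \eqref{DT-matrix} has $L$-periodic coefficients in $x$: the components $p_0, q_0, \hat p_0, \hat q_0$ are all $L$-antiperiodic because the exponentials $e^{\pm i k_0 x/2}$ in \eqref{AB-eigen} satisfy $k_0 L/2 = \pi$ and the denominator $|p_0|^2 + |q_0|^2$ is $L$-periodic, so each bilinear product in the rank-one correction of \eqref{DT-matrix} is $L$-periodic. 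Consequently, $D(\lambda)$ maps $L$-antiperiodic $2$-vectors to $L$-antiperiodic $2$-vectors, and the standing assumption $L \notin \pi \mathbb N_{\rm odd}$ excludes the exceptional zero eigenvalue from Lemma~\ref{lemma-spectrum}(3).

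For $\lambda \notin \{\lambda_0, -\lambda_0\}$ the matrix $D(\lambda)$ is invertible, so $\Phi \mapsto D(\lambda)\Phi$ is a linear bijection between the $L$-antiperiodic solution spaces of $(\mathcal L - \lambda I)\Phi = 0$ and $(\hat{\mathcal L} - \lambda I)\hat\Phi = 0$, preserving both geometric multiplicity and algebraic multiplicity; the latter follows by differentiating $\hat\Phi(\lambda) = D(\lambda)\Phi(\lambda)$ in $\lambda$, using the identities $(\mathcal L - \lambda I)\partial_\lambda \Phi = \Phi$ and $(\hat{\mathcal L} - \lambda I)\partial_\lambda \hat\Phi = \hat\Phi$, together with analyticity of $D(\lambda)$ at these points. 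Combined with Lemma~\ref{lemma-spectrum}(2), this identifies the $L$-antiperiodic Lax spectrum of $\hat u_0$ away from $\pm\lambda_0$ with $\Sigma_0^{(A)}\setminus\{\pm\lambda_0\}$, establishing part~(1).

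The core of the proof is the exceptional eigenvalue $\lambda = \lambda_0 = \lambda^{(A)}_1$; the case $-\lambda_0$ then follows from Remark~\ref{remark-symmetry}. One $L$-antiperiodic eigenfunction, $(\hat p_0, \hat q_0)^T$, is supplied by \eqref{DT-eigen} of Proposition~\ref{prop-Darboux}, and the explicit $L$-antiperiodic function $\varphi_1$ of \eqref{gen-eigenvector} provides a generalized eigenfunction, showing that the algebraic multiplicity is at least two. To obtain the matching upper bounds, I would analyze the limit $\lambda \to \lambda_0$ of $D(\lambda)\Phi(\lambda)$ on a smooth matrix family $\Phi(\lambda) = [\psi_1(\lambda), \psi_2(\lambda)]$ of solutions for $u = 1$ with $\psi_1(\lambda_0) = (p_0, q_0)^T$ and $\psi_2(\lambda_0) = \phi_0$ taken from \eqref{eigenvectors-constant-background}. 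Since the rank-one factor $A$ of $D(\lambda)$ satisfies $A(p_0, q_0)^T = 0$ while $A\phi_0 \neq 0$ (a short computation using \eqref{AB-eigen} and \eqref{eigenvectors-constant-background} yields $A\phi_0 = ik_0 (\hat p_0, \hat q_0)^T$), the first column is regular at $\lambda_0$ and the second has a simple pole whose residue is proportional to $(\hat p_0, \hat q_0)^T$. A standard expansion in powers of $\lambda-\lambda_0$ then identifies the residue as the eigenfunction and the regular part as a generalized eigenfunction for $\hat{\mathcal L}$. Restricting to $L$-antiperiodic directions, only the eigenfunction $(\hat p_0, \hat q_0)^T$ and the generalized eigenfunction $\varphi_1$ emerge; the absence of a second linearly independent $L$-antiperiodic eigenfunction can then be read off from the monodromy of the $\hat u_0$ Lax system over one period $L$, whose eigenvalue $-1$ at $\lambda = \lambda_0$ is shown to form a nontrivial Jordan block, and the absence of a higher-order Jordan block follows from the rank-one structure of $A$.

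The main obstacle is this last step: carefully tracking how the two-dimensional $L$-antiperiodic eigenspace of $\mathcal L$ at $\lambda_0$ collapses under the singular Darboux transformation into a one-dimensional true eigenspace plus a one-dimensional generalized extension for $\hat{\mathcal L}$, and ruling out any larger Jordan block. Everything else reduces to routine bookkeeping using the invertibility of $D(\lambda)$ and the spectral structure already established in Lemma~\ref{lemma-spectrum}.
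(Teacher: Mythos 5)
Your overall strategy coincides with the paper's: transport the spectrum through the Darboux matrix $D(\lambda)$ away from $\pm\lambda_0$ (this gives part~(1) essentially as in the paper), and handle $\lambda=\lambda_0$ by a Laurent expansion of $D(\lambda)\Phi(\lambda)$, whose residue is the eigenfunction $\varphi_0=(\hat p_0,\hat q_0)^T$ and whose regular part supplies the generalized eigenfunction $\varphi_1$. Up to the lower bound ``algebraic multiplicity $\geq 2$'' your argument is sound and matches the paper.

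The genuine gap is exactly where you flag ``the main obstacle'': the two upper bounds in part~(2) are asserted, not proved. First, geometric simplicity. Your appeal to the monodromy of the $\hat u_0$ system having a nontrivial Jordan block at the eigenvalue $-1$ is a restatement of what must be shown, not a proof of it: conjugation by $D(\lambda)$ fails precisely at $\lambda_0$, and the trace/determinant of the monodromy only tell you that $-1$ is an algebraically double Floquet multiplier, which is consistent with both $-I$ (two antiperiodic solutions) and a nontrivial Jordan block (one). The paper settles this by computing the second linearly independent solution $\phi_0$ at $\lambda=\lambda_0$ explicitly, equation \eqref{non-periodic-vector}, and observing that it contains a term proportional to $x\,(\hat p_0,\hat q_0)^T$, hence is not $L$-antiperiodic; since the solution space of the first-order system is two-dimensional, geometric simplicity follows. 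Second, the claim that ``the absence of a higher-order Jordan block follows from the rank-one structure of $A$'' is unsubstantiated: the algebraic multiplicity of $\lambda_0$ as an eigenvalue of $\mathcal L$ in $L^2_{\rm antiper}(0,L)$ is not a priori bounded by the size of the ODE system (compare $\lambda=0$ for $u=1$, which has algebraic multiplicity four), and the rank of the Darboux correction does not control the length of the Jordan chain. The paper closes this either by exhibiting $\varphi_2$ in \eqref{second-generalized-eigenvector}, which solves $(\mathcal L-\lambda_0 I)\varphi_2=\varphi_1$ but carries an $x^2$ term that no addition of kernel elements can cancel, or, in the remark following the lemma, by the Fredholm computation $\langle\varphi_0^*,\varphi_1\rangle = 2\lambda_0^2 L/k_0^2\neq 0$ with $\varphi_0^*=(\bar{\hat q}_0,\bar{\hat p}_0)^T$. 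Your Laurent-expansion machinery is exactly what is needed to produce $\phi_0$ and $\varphi_2$, so the gap is closable along your own lines, but as written the decisive computations are missing. (A smaller point: in part~(1), your $\partial_\lambda$ argument produces candidate generalized eigenfunctions that are generally not $L$-antiperiodic because $\partial_\lambda e^{\pm ik(\lambda)x/2}$ grows linearly in $x$, so preservation of algebraic multiplicity also needs a Fredholm-type check rather than differentiation alone.)
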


\begin{proof}
  The Darboux matrix $D(\lambda)$ given by (\ref{DT-matrix}) is $L$-periodic in $x$ and invertible for every $\lambda \neq \pm \lambda_0$. It follows 
  from the relation (\ref{fund-matrix}) that there is one-to-one correspondence between the $L$-antiperiodic solutions of the Lax systems with $u=1$ and $u= \hat{u}_0$ when  $\lambda \neq \pm \lambda_0$. Consequently, 
  with the exception of $m = 1$, the $L$-antiperiodic Lax spectrum for $u=\hat u_0$ is the same as the $L$-antiperiodic Lax spectrum for $u=1$ in (\ref{spectrum-constant-antiperiodic}) and the property (1) holds. 
The linearly independent eigenfunctions for the eigenvalues $\lambda=\pm \lambda^{(A)}_m$ are given in the form
  \begin{equation}
  \label{mode-continuous}
  \hat{\varphi} := \varphi + \frac{1}{\lambda - \lambda_0} \left[ \begin{array}{l} \hat{p}_0 \\ \hat{q}_0 \end{array} \right] \left[ -q_0 \;\; p_0 \right] \varphi, \quad 
  \hat{\phi} := \phi + \frac{1}{\lambda - \lambda_0} \left[ \begin{array}{l} \hat{p}_0 \\ \hat{q}_0 \end{array} \right] \left[ -q_0 \;\; p_0 \right] \phi,
  \end{equation}
where the two linearly independent eigenfunctions $\varphi$ and $\phi$ are
given by (\ref{eigenvectors-constant-background}) if $0<L<\pi m$ 
and by (\ref{eigenvectors-constant-background-gamma}) if $L>\pi m$. 
The marginal case $L=\pi m$ is excluded by the assumption.

For $\lambda=\lambda_0$, transformation (\ref{DT-eigen}) gives the eigenfunction $\varphi_0 = (\hat{p}_0,\hat{q}_0)^T$ of the Lax system with $u = \hat{u}_0$ and it is easy to check that $\varphi_0$ is $L$-antiperiodic in $x$. For $\lambda = -\lambda_0$ we have the eigenfunction $\varphi = (-\bar{\hat{q}}_0,\bar{\hat{p}}_0)^T$ due to the symmetry in Remark \ref{remark-symmetry}.  Hence $\{ +\lambda_0, -\lambda_0 \}$ belong to the  $L$-antiperiodic Lax spectrum for $u=\hat u_0$. It remains to show that $\lambda_0$ is geometrically simple and algebraically double, the result for $-\lambda_0$ following then by the symmetry of the Lax system.

For this part of the proof, we rely on the explicit computation of the expansion into Laurent series of the $2\times2$ matrix solution $\hat{\Phi}(\lambda)$ to the Lax system with $u=\hat u_0$ given in Appendix~\ref{a hatphi}.
The vector $\phi_0$ given by \eqref{non-periodic-vector} is a second linearly independent solution to  the Lax system (\ref{lax-1})--(\ref{lax-2}) for $u = \hat{u}_0$ and $\lambda = \lambda_0$. Since it is not $L$-antiperiodic in $x$, we deduce that $\lambda_0$ is geometrically simple. Next, $\varphi_1$ given by \eqref{gen-eigenvector} is $L$-antiperiodic and satifies $(\mathcal L-\lambda_0 I)\varphi_1=\varphi_0$, whereas
 $\varphi_2$ given by \eqref{second-generalized-eigenvector} satisfies $(\mathcal L-\lambda_0 I)\varphi_2=\varphi_1$, but it is not $L$-antiperiodic. This implies that $\lambda_0$ is algebraically double and completes the proof.
\end{proof}

\begin{remark}  
For an alternative proof that $\lambda_0$ is algebraically double, 
we can check the Fredholm condition for the eigenfunction $\varphi_0$ and the first generalized eigenfunction $\varphi_1$. Taking the eigenfunction $\varphi_0^* = (\bar{\hat{q}}_0,\bar{\hat{p}}_0)^T$ of the adjoint problem $\left( \mathcal{L}^* - \lambda_0 I \right) \varphi_0^* = 0$ and the inner product $\langle \cdot, \cdot \rangle$ in $L^2(0,L)$, we find that
\begin{eqnarray*}
	\langle \varphi_0^*, \varphi_0 \rangle = 
\lambda_0^2 \int_{0}^L \frac{\cosh(\sigma_0 t - ik_0 x) - \lambda_0}{(\cosh(\sigma_0 t) - \lambda_0 \cos(k_0 x))^2} dx = 0
\end{eqnarray*}
and
  \begin{eqnarray*}
	\langle \varphi_0^*, \varphi_1 \rangle 
	&=& \frac{\lambda_0}{2} \int_{0}^L \frac{\cos(k_0 x)}{\cosh(\sigma_0 t) - \lambda_0 \cos(k_0 x)} dx \\
	&& 
	+ \frac{2\lambda_0^2}{k_0^2} \int_{0}^L \frac{(\cosh(\sigma_0 t + i k_0 x) - \lambda_0) (\cosh(\sigma_0 t - i k_0 x) - \lambda_0)}{(\cosh(\sigma_0 t) - \lambda_0 \cos(k_0 x))^2} dx\\
	&=&
	\frac{2 \lambda_0^2}{k_0^2} \int_{0}^L \frac{\cosh^2(\sigma_0 t) + \lambda_0^2 \cos^2(k_0 x) - 2\lambda_0^2}{(\cosh(\sigma_0 t) - \lambda_0 \cos(k_0 x))^2} dx
	=\frac{2 \lambda_0^2}{k_0^2} L \not=0.
  \end{eqnarray*}
Since $\langle \varphi_0^*, \varphi_0 \rangle=0$, there exists the generalized eigenfunction $\varphi_1$ satisfying $(\mathcal{L} - \lambda_0 I) \varphi_1 = \varphi_0$ in $L^2_{\rm antiper}(0,L)$. Since $\langle \varphi_0^*, \varphi_1 \rangle \neq 0$, there is no the second generalized eigenfunction 
$\varphi_2$ satisfying $(\mathcal{L} - \lambda_0 I) \varphi_2 = \varphi_1$  in $L^2_{\rm antiper}(0,L)$. This implies that $\lambda_0$ is algebraically double. 
\end{remark}

For $L$-periodic solutions, we show that Lax spectrum of AB consists of the same eigenvalues (\ref{spectrum-constant-periodic}) as for the constant-amplitude solution $u = 1$. Moreover, algebraic multiplicitilies of the eigenvalues coincide.

\begin{lemma}\label{lax per}
  Consider AB given by (\ref{AB-u}) and assume $L \notin \pi \mathbb{N}_{\rm even}$. The spectrum of the ZS spectral problem (\ref{lax-1}) with $u = \hat{u}_0$ in $L^2_{\rm per}(0,L)$ consists of isolated eigenvalues
\[
 \Sigma_{\rm AB}^{(P)}= \{ \pm \lambda^{(P)}_m, \;\; m \in \{0,\mathbb{N}_{\rm even}\}\}, \quad
\lambda^{(P)}_m := \sqrt{1-\frac{\pi^2}{L^2} m^2}, 
\]
with the following properties:
\begin{enumerate}
\item For each $m\in\mathbb{N}_{\rm even}$, the eigenvalues $\pm\lambda^{(P)}_m$ are geometrically and algebraically double.
  \item The eigenvalues $\lambda^{(P)}_0 = 1$ and $-\lambda^{(P)}_0 = -1$ are algebraically simple with associated eigenfunctions $\varphi = (\hat{\varphi}_1,\hat{\varphi}_2)^T$ and $\varphi = (-\bar{\hat{\varphi}}_2,\bar{\hat{\varphi}}_1)^T$ respectively, where 
  $\hat{\varphi} = (\hat{\varphi}_1,\hat{\varphi}_2)^T$ is given~by 
\begin{equation}
\label{varphi-1}
\hat{\varphi} =
\left[ \begin{array}{c} 1\\ -1\end{array} \right] - 
\frac{p_0 + q_0}{1-\lambda_0} \left[ \begin{array}{c} \hat{p}_0 \\ \hat{q}_0   \end{array} \right].
\end{equation}
\end{enumerate}
\end{lemma}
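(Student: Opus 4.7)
The strategy I would follow parallels that of Lemma \ref{lax antiper}: use the Darboux matrix $D(\lambda)$ from \eqref{DT-matrix} to transfer the periodic Lax spectrum from the constant-amplitude case (Lemma \ref{lemma-spectrum}) to that of AB. The crucial observation is that $D(\lambda)$ is $L$-\emph{periodic} in $x$, even though its building blocks $p_0, q_0, \hat{p}_0, \hat{q}_0$ are $L$-antiperiodic. Indeed, from \eqref{AB-eigen} and \eqref{DT-eigen} the functions $p_0, q_0, \hat{p}_0, \hat{q}_0$ are $L$-antiperiodic with $L=2\pi/k_0$, and the rank-one correction $[\hat{p}_0,\hat{q}_0]^T[-q_0\;p_0]$ in \eqref{DT-matrix} is a product of two $L$-antiperiodic factors, hence $L$-periodic.

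Since $\pm\lambda_0$ belong to $\Sigma_0^{(A)}$ but not to $\Sigma_0^{(P)}$, the matrix $D(\lambda)$ is invertible on the entire unperturbed periodic spectrum. The map $\Phi(\lambda)\mapsto\hat{\Phi}(\lambda)=D(\lambda)\Phi(\lambda)$ then provides a one-to-one correspondence between $L$-periodic solutions of the Lax system for $u=1$ and for $u=\hat{u}_0$ at every $\lambda^*\in\Sigma_0^{(P)}$, yielding $\Sigma_{\rm AB}^{(P)}=\Sigma_0^{(P)}$. To transport algebraic multiplicities, I would pick a holomorphic family of Lax solutions $\Phi(\lambda)$ whose Taylor expansion $\Phi(\lambda)=\Phi_0+(\lambda-\lambda^*)\Phi_1+\cdots$ represents a Jordan chain $(\mathcal{L}_{u_0}-\lambda^*I)\Phi_n=\Phi_{n-1}$, and observe that the expansion of $\hat{\Phi}(\lambda)=D(\lambda)\Phi(\lambda)$ produces a Jordan chain of the same length for $\mathcal{L}_{\hat{u}_0}$, since $D(\lambda)$ is holomorphic and invertible at $\lambda^*$ and $L$-periodic in $x$. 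Combining this with Lemma \ref{lemma-spectrum} yields the stated multiplicities: algebraically and geometrically double for $m\in\mathbb{N}_{\rm even}$, algebraically simple for $m=0$.

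To obtain the explicit eigenfunction at $\lambda=1$, I would apply $D(1)$ to the eigenfunction $(1,-1)^T$ of the $u=1$ Lax system listed in \eqref{eigenvectors-constant-background-zero}. A direct computation using \eqref{DT-matrix} gives
\[
D(1)\begin{pmatrix}1\\-1\end{pmatrix}=\begin{pmatrix}1\\-1\end{pmatrix}-\frac{p_0+q_0}{1-\lambda_0}\begin{pmatrix}\hat{p}_0\\\hat{q}_0\end{pmatrix},
\]
which is exactly \eqref{varphi-1}; the eigenfunction at $\lambda=-1$ then follows from Remark \ref{remark-symmetry}. The main technical point in this plan is the rigorous transport of algebraic multiplicity through $D(\lambda)$. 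If the holomorphic-family argument feels too indirect, a clean fallback is to mimic the remark following Lemma \ref{lax antiper} and check the Fredholm solvability conditions directly, verifying that $\langle\hat{\varphi}^*,\hat{\varphi}\rangle\neq 0$ at $\lambda=\pm 1$ in $L^2_{\rm per}(0,L)$ to rule out the existence of a generalized eigenfunction.
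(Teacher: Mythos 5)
Your proposal is correct and follows essentially the same route as the paper: the $L$-periodicity and invertibility of the Darboux matrix $D(\lambda)$ at every $\lambda=\pm\lambda_m^{(P)}$ gives the one-to-one correspondence with the $L$-periodic Lax spectrum of $u=1$, and the eigenfunction \eqref{varphi-1} is obtained exactly as $D(1)(1,-1)^T$. For the algebraic simplicity of $\lambda=\pm1$ the paper uses precisely your fallback rather than the holomorphic-transport argument --- it checks the Fredholm condition $\langle\varphi^*,\varphi\rangle=-2\tfrac{1+\lambda_0}{1-\lambda_0}L\neq0$ with $\varphi^*=(\bar{\hat{\varphi}}_2,\bar{\hat{\varphi}}_1)^T$ --- which is the safer choice here since the branch point of $k(\lambda)=2\sqrt{1-\lambda^2}$ at $\lambda=1$ makes a holomorphic family of Lax solutions awkward to set up at that point.
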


\begin{proof}
As in the proof of Lemma~\ref{lax antiper}, the set of eigenvalues and their geometric and algebraic multiplicities are found from the Darboux matrix $D(\lambda)$ in (\ref{DT-matrix}) and the transformation (\ref{fund-matrix}). 
Since $D(\lambda)$ is $L$-periodic in $x$ and invertible for every $\lambda = \pm \lambda_m^{(P)}$, there is one-to-one correspondence between the $L$-periodic solutions of the Lax systems with $u = 1$ and $u = \hat{u}_0$. Moreover, the explicit expressions (\ref{mode-continuous}) for 
eigenfunctions $\hat{\varphi}$ and $\hat{\phi}$ hold for every $\lambda = \pm \lambda_m^{(P)}$.

For the eigenvalue $\lambda = \lambda^{(P)}_0 = 1$, only one linearly independent eigenfunction 
$\varphi = \hat{\varphi} = (\hat{\varphi}_1,\hat{\varphi}_2)^T$ in $L^2_{\rm per}(0,L)$ exists in the form (\ref{varphi-1}). In order to check the algebraic multiplicity of $\lambda=1$, we take eigenfunction $\varphi^* = (\bar{\hat{\varphi}}_2,\bar{\hat{\varphi}}_1)^T$ of the adjoint problem 
$(\mathcal{L}^* - I) \varphi^* = 0$ and compute the scalar product
\begin{eqnarray*}
	\langle \varphi^*, \varphi \rangle 
	&=& -2 \int_{0}^L \left[ 1 + \frac{2\lambda_0}{1-\lambda_0} 
	\frac{p_0 \bar{q}_0 + \bar{p}_0 q_0 + |p_0|^2 + |q_0|^2}{|p_0|^2 + |q_0|^2} + \frac{4 \lambda_0^2}{(1-\lambda_0)^2} \frac{p_0 q_0 (\bar{p}_0 + \bar{q}_0)^2}{(|p_0|^2 + |q_0|^2)^2} \right] dx \\
	&=& -2 \frac{1+\lambda_0}{1-\lambda_0} \int_{0}^L \left[ 1 + 2 \lambda_0 
	\frac{\cosh(\sigma_0 t) \cos(k_0 x) - \lambda_0 - i \lambda_0  \sinh(\sigma_0 t) \sin(k_0 x)}{\left[\cosh(\sigma_0 t) - \lambda_0 \cos(k_0 x)\right]^2} \right] dx \\
	&=& -2 \frac{1+\lambda_0}{1-\lambda_0} L.
\end{eqnarray*}
Since $\langle \varphi^*, \varphi \rangle \neq 0$, there exists no generalized eigenfunction satisfying  $(\mathcal{L} - I) \hat{\varphi}_g = \hat{\varphi}$ 
in $L^2_{\rm per}(0,L)$ so that the eigenvalue $\lambda = 1$ is algebraically simple. The result for $\lambda=-1$ is a consequence of the symmetry of the Lax system in Remark~\ref{remark-symmetry}.
\end{proof}

\subsection{Linearized NLS equation at AB}\label{ss AB LNLS}

As in the case of the constant-amplitude solution $u=1$, we construct $L$-periodic solutions of the linearized NLS equation at AB from the $L$-periodic and $L$-antiperiodic solutions of the Lax equations. These solutions are generated by the eigenvalues $\lambda^{(A)}_m$ and $\lambda^{(P)}_m$ in Lemmas~\ref{lax antiper} and~\ref{lax per}, respectively. 

We focus on the solutions related to the positive eigenvalues $\lambda^{(A)}_1 = \lambda_0$ and $\lambda^{(P)}_0 = 1$. By the symmetry of the Lax system, the negative eigenvalues $-\lambda^{(A)}_1 = -\lambda_0$ and $-\lambda^{(P)}_0 = -1$ provide the same solutions up to the sign change. The particular goal is to identify six linearly independent solutions of the linearized NLS equation at AB which  correspond to the six linearly independent solutions  in the decomposition (\ref{v-arbitrary-constant}) with $m=0$ and $m=1$ for the solutions of the linearized NLS equation at $u=1$. The correspondence is established by showing that the solutions constructed for AB become identical to the ones for $u=1$ asymptotically as $t\to\pm\infty$. 

The following theorem presents the main result of these computations. 

\begin{theorem}\label{six solutions}
	Consider AB given by (\ref{AB-u}). Solutions of the Lax system (\ref{lax-1})--(\ref{lax-2}) with $u=\hat u_0$ for the eigenvalues $\lambda= 1$ and $\lambda=\lambda_0$ generate the following six linearly independent $L$-periodic solutions of the linearized NLS equation (\ref{nls-lin}) at AB:
	\begin{enumerate}
		\item the solutions $v_1$ in (\ref{v1}) and $v_2$ in (\ref{v2}), which are asymptotically equivalent to the solutions $v_{\lambda(k_1)}^-$ and $\widetilde v_0^+$ respectively, in the decomposition (\ref{v-arbitrary-constant});
		\item the solution $w_2$ in (\ref{w2}), which is asymptotically equivalent to the solution $v_{-\lambda(k_1)}^-$ in the decomposition (\ref{v-arbitrary-constant});
		\item the solution $v$ in (\ref{element-basis}), which is asymptotically equivalent to the solution $\widetilde v_{0}^-$ in the decomposition (\ref{v-arbitrary-constant});
		\item the solutions $v$ in (\ref{new+}) and (\ref{new-}), which are asymptotically equivalent to the solutions $v_{-\lambda(k_1)}^+$ and $v_{\lambda(k_1)}^+$ respectively, in the decomposition (\ref{v-arbitrary-constant}).
	\end{enumerate}
\end{theorem}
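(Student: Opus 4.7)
The plan is to construct the six claimed linearized-NLS solutions at AB directly from the Lax eigenfunctions and generalized eigenfunction identified in Lemmas~\ref{lax antiper} and~\ref{lax per}, and then to identify each by matching its behavior as $t\to\pm\infty$ with the corresponding element of the constant-background decomposition \eqref{v-arbitrary-constant} at $m=0,1$.

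First I would assemble the available Lax data at $\lambda=1$ and $\lambda=\lambda_0$. By Lemma~\ref{lax per}, $\lambda=1$ is algebraically simple with the single $L$-periodic eigenfunction $\hat\varphi=(\hat\varphi_1,\hat\varphi_2)^T$ given by \eqref{varphi-1}. By Lemma~\ref{lax antiper}, $\lambda=\lambda_0$ is algebraically double with $L$-antiperiodic eigenfunction $\varphi_0=(\hat p_0,\hat q_0)^T$ and $L$-antiperiodic generalized eigenfunction $\varphi_1=(\varphi_{1,1},\varphi_{1,2})^T$. By the symmetry of Remark~\ref{remark-symmetry}, the eigenvalues $-1$ and $-\lambda_0$ produce, up to a sign, the same linearized-NLS solutions, so working at $\lambda=1$ and $\lambda=\lambda_0$ suffices.

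Second, I would apply Proposition~\ref{prop-NLS-lin} to $\hat\varphi$ via \eqref{v-relation} and \eqref{v-relation-another} to obtain the two $L$-periodic solutions $v_1$ and $v_2$; $L$-periodicity is inherited from $\hat\varphi$. Next, applying Proposition~\ref{prop-NLS-lin} to $\varphi_0$ produces two further solutions: since $\varphi_0$ is $L$-antiperiodic, the squares $\hat p_0^2$ and $\bar{\hat q}_0^2$ are $L$-periodic, and one of the two solutions is $w_2$. Finally, I would invoke formula \eqref{solutions-generalized} from Remark~\ref{remark-double} with the pair $(\varphi_0,\varphi_1)$ to obtain two additional $L$-periodic solutions; appropriate linear combinations with the Pair~I solutions coming from $\varphi_0$ alone then yield the $v$'s in \eqref{element-basis}, \eqref{new+}, and \eqref{new-}. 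This accounts for exactly $2+2+2=6$ solutions from the three available sources.

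Third, I would carry out the asymptotic matching. Since $\hat u_0(x,t)\to(\sqrt{1-\lambda_0^2}\pm i\lambda_0)^2$ as $t\to\pm\infty$, the linearized NLS at AB reduces, up to a $t$-independent phase, to the linearized NLS at $u=1$. Using the Darboux formula \eqref{DT-eigen} together with the explicit expressions \eqref{AB-eigen} for $(p_0,q_0)$, one computes the pointwise limits of $\hat p_0$, $\hat q_0$, $\hat\varphi_1$, $\hat\varphi_2$, and of the components of $\varphi_1$, as $t\to\pm\infty$; squaring these limits and comparing with the Fourier basis functions $\widetilde v_0^\pm$ and $v_{\pm\lambda(k_1)}^\pm$ in \eqref{v-arbitrary-constant} gives each of the six claimed identifications. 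Linear independence of the six solutions follows from the linear independence of the six limit functions at the constant background. The main obstacle is the bookkeeping in this last step: the generalized-eigenfunction formulas \eqref{solutions-generalized} produce polynomially-in-$t$ growing contributions whose coefficients must be tracked precisely to recover the secular solution $\widetilde v_0^-(t)=1+2it$ and to cancel unwanted pieces, and the four Pair~I and generalized-eigenfunction solutions attached to $\lambda_0$ generically match the $v_{\pm\lambda(k_1)}^\pm$ only through explicit linear combinations rather than term by term.
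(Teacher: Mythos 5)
There is a genuine gap in the step that is supposed to produce the solutions \eqref{element-basis}, \eqref{new+}, and \eqref{new-}. You propose to obtain them from formula \eqref{solutions-generalized} of Remark~\ref{remark-double} applied to the pair $(\varphi_0,\varphi_1)$, together with linear combinations of the Pair~I solutions coming from $\varphi_0$ and $\hat\varphi$. This cannot work: the products $2\varphi_{0,1}\varphi_{1,1}-2\bar\varphi_{0,2}\bar\varphi_{1,2}$ and $2i\varphi_{0,1}\varphi_{1,1}+2i\bar\varphi_{0,2}\bar\varphi_{1,2}$ are exactly the cross terms arising from Pair~I applied to the Laurent expansion \eqref{varphilambda}, and the paper verifies that all such Pair~I contributions are linear combinations of the previously obtained solutions --- this is precisely the ``surprising'' obstruction flagged in the third subsubsection of the proof. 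Every object you use ($\hat\varphi$, $\varphi_0$, $\varphi_1$, and their Pair~I/generalized products) is a bounded, $L$-periodic (or $L$-antiperiodic) function of $x$, and within their span one only recovers $v_1$, $v_2$, $w_1\propto v_1$, $w_2$, and dependents: at most four independent periodic solutions, not six. In particular the exponentially growing unstable modes \eqref{new+} and \eqref{new-} are unreachable by this route; this is exactly the incompleteness phenomenon of \cite{CalSch2012} corrected in \cite{GS-2021}. Your count ``$2+2+2=6$'' also overcounts, since $w_1$ is proportional to $v_1$.

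The missing idea is that the remaining three solutions must be built from Pair~II of Table~\ref{table-1} using the \emph{non-periodic, linearly growing in $x$} second solutions of the Lax system: $\hat\phi$ in \eqref{varphi-2} at $\lambda=1$, and $\phi_0$, $\phi_1$ in \eqref{non-periodic-vector}, \eqref{second-non-periodic-vector} at $\lambda=\lambda_0$, the latter extracted from the Laurent expansion of $\hat\Phi(\lambda)$ in Appendix~\ref{a hatphi}. These yield non-periodic solutions $v_3$, $v_4$, $w_3$, $w_4$, $v_\pm$ whose $x$-linear parts are then cancelled in the specific combinations \eqref{element-basis}, \eqref{new+}, \eqref{new-}, in line with Remark~\ref{remark-completeness} (bounded solutions arising as superpositions of unbounded ones). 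Your asymptotic-matching strategy in the final step is sound in spirit and close to what the paper does, but it is applied to a solution set that, as constructed, does not contain the three solutions in items (3) and (4) of the theorem.
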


The six solutions in this theorem are computed explicitly in the next three subsections.

\begin{remark}
Due to the two exponentially growing solutions in item (4) of Theorem \ref{six solutions}, AB is linearly unstable. This agrees with the main conclusion of \cite{GS-2021} based on symbolic computations.
For periods $L\in(\pi,2\pi)$ the eigenvalues $\lambda_0$ and $1$ are the only positive eigenvalues of the Lax system, see Figure \ref{fig-Lax-2}. For larger periods $L>2\pi$, there are additional positive eigenvalues which lead to  exponentially growing solutions for the linearized NLS equation at~AB.
  \end{remark}

\subsubsection{Solutions related to $\lambda = 1$}

Recall from Lemma \ref{lax per} that $\lambda=1$ is an algebraically simple eigenvalue in $\Sigma_{\rm AB}^{(P)}$ associated with  eigenfunction $\hat\varphi$ given by \eqref{varphi-1}.  The second linearly independent solution of the Lax system (\ref{lax-1})--(\ref{lax-2}) for $u = \hat{u}_0$ and $\lambda = 1$ 
is obtained from the second vector in (\ref{eigenvectors-constant-background-zero}) by using the transformation formula (\ref{fund-matrix})--(\ref{DT-matrix}) with $\lambda = 1$:
\begin{equation}
\label{varphi-2}
\hat\phi = \left[ \begin{array}{c} x+it+1 \\ -x-it\end{array} \right] - 
\frac{(x+it)(p_0 + q_0)+q_0}{1 - \lambda_0} \left[ \begin{array}{c} \hat{p}_0 \\ \hat{q}_0   \end{array} \right].
\end{equation}

By using the $L$-periodic eigenfunction $\hat\varphi$ in (\ref{varphi-1}) in 
Pair I in Table \ref{table-1}, we obtain 
the following two $L$-periodic solutions of the linearized NLS equation (\ref{nls-lin}):
\begin{equation}\label{v1}
v_1(x,t) = -\frac{2\lambda_0 (1+\lambda_0)}{1-\lambda_0} \; 
\frac{\sin(k_0 x) \left[ k_0 \cosh(\sigma_0 t) + 2 i \lambda_0 \sinh(\sigma_0 t)\right]}{\left[ \cosh(\sigma_0 t) - \lambda_0 \cos(k_0 x)\right]^2}.
\end{equation}
and
\begin{eqnarray}
\nonumber
 v_2(x,t) &=&
 \frac{2i (1+\lambda_0)}{1-\lambda_0} \; 
\left[ \frac{i k_0 \lambda_0 \sinh(\sigma_0 t) \cosh(\sigma_0 t)}{\left[ \cosh(\sigma_0 t) - \lambda_0 \cos(k_0 x)\right]^2} \right. \\
  && \left. + \frac{(1-2\lambda_0^2) \cosh^2(\sigma_0 t) - \lambda_0^2 \cos^2(k_0 x) +2 \lambda_0^2}{\left[ \cosh(\sigma_0 t) - \lambda_0 \cos(k_0 x)\right]^2} \right].
\label{v2}
\end{eqnarray}
As $t \to \pm \infty$, the periodic solution $v_1$ decays to $0$, whereas the periodic solution $v_2$ approaches  a nonzero constant. These two solutions 
are asymptotically equivalent to the solutions $v_{\lambda(k_1)}^-$ and $\widetilde v_0^+$ in the decomposition (\ref{v-arbitrary-constant}).

By using both the $L$-periodic eigenfunction $\hat\varphi$ in (\ref{varphi-1}) and the non-periodic solution $\hat\phi$ in (\ref{varphi-2}) in 
Pair II in Table \ref{table-1}, we obtain the following two 
non-periodic solutions of the linearized NLS equation (\ref{nls-lin}):
\begin{eqnarray}
\label{v3}
&& v_3(x,t) = x v_1(x,t) + t v_2(x,t) + f_1(x,t), \\
\label{v4}
&& v_4(x,t) = x v_2(x,t) - t v_1(x,t) + f_2(x,t), 
\end{eqnarray}
where the periodic parts $f_1$ and $f_2$ are given by 
\begin{eqnarray*}
f_1(x,t)& = & \frac{1+\lambda_0}{1-\lambda_0} \; 
\left[ 1 + \frac{4 i \lambda_0^2}{k_0} \sinh(\sigma_0 t) \frac{\lambda_0 \cosh(\sigma_0 t) - \cos(k_0 x)}{\left[ \cosh(\sigma_0 t) - \lambda_0 \cos(k_0 x) \right]^2} \right. \\
  &&
  \left. + 
  2 \lambda_0^2 \frac{\cosh^2(\sigma_0 t) - \cos^2(k_0 x) - i \sin(k_0 x) \sinh(\sigma_0 t)}{\left[ \cosh(\sigma_0 t) - \lambda_0 \cos(k_0 x)\right]^2}
 \right. \\
&& 
\left.  - \lambda_0 k_0 \frac{\cosh(\sigma_0 t) \sin(k_0x)}{\left[ \cosh(\sigma_0 t) - \lambda_0 \cos(k_0 x)\right]^2} \right].
\end{eqnarray*}
and
\begin{eqnarray*}
 f_2(x,t) &= & i \frac{1+\lambda_0}{1-\lambda_0} \; 
\left[ 1 - \frac{k_0 \lambda_0}{1-\lambda_0^2} \frac{\sin(k_0 x)}{\cosh(\sigma_0 t) - \lambda_0 \cos(k_0 x)}  + \frac{i \lambda_0 k_0  \cosh(\sigma_0 t) \sinh(\sigma_0 t)}{\left[ \cosh(\sigma_0 t) - \lambda_0 \cos(k_0 x)\right]^2} \right. \\
  && 
  \left. +
 \frac{4 \lambda_0 (1-\lambda_0) \cos(k_0 x)}{\cosh(\sigma_0 t) - \lambda_0 \cos(k_0 x)} -
\lambda_0^2 \frac{\cosh(2 \sigma_0 t) + \cos(2k_0 x)}{\left[ \cosh(\sigma_0 t) - \lambda_0 \cos(k_0 x)\right]^2} \right.\\
&&  
\left.  +
\frac{\lambda_0 (1+\lambda_0) (1-\lambda_0^2) \cos(k_0 x)^2}{\left[ \cosh(\sigma_0 t) - \lambda_0 \cos(k_0 x)\right]^2}
\frac{2 \lambda_0^3 \cosh(\sigma_0 t) \cos(k_0 x)}{\left[ \cosh(\sigma_0 t) - \lambda_0 \cos(k_0 x)\right]^2}\right].
\end{eqnarray*}
Both solutions grow linearly in $x$ and are not $L$-periodic.
As $t \to \pm \infty$, the non-periodic solution $v_3$ becomes asymptotically periodic, because $v_1$ decays to $0$, and could represent ${v}_0^-$ in the decomposition (\ref{v-arbitrary-constant}). However, one needs to cancel the polynomial term in $x$ by using a linear combination with other solutions of the linearized NLS equation (\ref{nls-lin}). 

Finally, by using the non-periodic solution (\ref{varphi-2}) in 
Pair III in Table \ref{table-1}, we obtain two other non-periodic solutions of the linearized NLS equation (\ref{nls-lin}), which are quadratic with respect to $x$. As is described in the recent symbolic computations in \cite{GS-2021}, 
such quadratic solutions in $x$ play no role in the proof of Theorem \ref{six solutions}.

\subsubsection{Solutions related to $\lambda = \lambda_0$}

By Lemma~\ref{lax antiper}, the eigenvalue $\lambda_0$ is geometrically simple with $L$-antiperiodic eigenfunction $\varphi_0 = (\hat{p}_0,\hat{q}_0)^T$ given by  (\ref{norming-factor}). A second linearly independent solution of the Lax system is the non-periodic solution $\phi_0$ given by (\ref{non-periodic-vector}) in Appendix~\ref{a hatphi}.

By using  Pair I in Table \ref{table-1} with $\varphi_0$, we obtain the following two $L$-periodic solutions of the linearized NLS equation (\ref{nls-lin}):
\begin{equation}
w_1 = \hat{p}_0^2 - \bar{\hat{q}}_0^2  = \frac{\lambda_0^2 \sin(k_0 x) \left[ 
	k_0 \cosh(\sigma_0 t) + 2 i \lambda_0 \sinh(\sigma_0 t)\right]}{2 \left[\cosh(\sigma_0 t) - \lambda_0 \cos(k_0 x)\right]^2} 
= -\lambda_0 k_0^{-2} \frac{\partial \hat{u}_0}{\partial x} 
\label{w1}
\end{equation}	
and
\begin{equation}
w_2 = i(\hat{p}_0^2 + \bar{\hat{q}}_0^2)  = \frac{\lambda_0^2 \left[ 
	k_0 \sinh(\sigma_0 t) \cos(k_0 x) + 2 i \lambda_0 \cosh(\sigma_0 t) \cos(k_0 x) - 2i \right]}{2 \left[\cosh(\sigma_0 t) - \lambda_0 \cos(k_0 x)\right]^2} = -k_0^{-2} \frac{\partial \hat{u}_0}{\partial t}.
\label{w2}
\end{equation}
These are neutral modes generated by the translational symmetries of the NLS equation (\ref{nls-u}) in $x$ and $t$. Note that $w_1$ is proportional to the solution $v_1$ in \eqref{v1},
\begin{equation}
\label{v1-w1}
v_1 = -\frac{4(1+\lambda_0)}{\lambda_0 (1-\lambda_0)} w_1.
\end{equation}
As $t \to \pm \infty$, the two periodic solutions $w_1$ and $w_2$ decay to $0$. These two solutions are asymptotically equivalent to the solutions 
$v_{\lambda(k_1)}^-$ and $v_{-\lambda(k_1)}^-$ in the decomposition (\ref{v-arbitrary-constant}). 

Next, we record the following algebraic computations:
\begin{eqnarray*}
&&-\bar{q}_0 p_+(\lambda_0) + p_0 \bar{q}_+(\lambda_0) = 4i \sin(k_0 x), \\
&&-\bar{q}_0 p_-(\lambda_0) + p_0 \bar{q}_-(\lambda_0) = 0, \\
&&-\bar{q}_0 p_+(\lambda_0) - p_0 \bar{q}_+(\lambda_0) = 4 \lambda_0 \sinh(\sigma_0 t) - 2 i k_0 \cosh(\sigma_0 t), \\ 
&&-\bar{q}_0 p_-(\lambda_0) - p_0 \bar{q}_-(\lambda_0) = 4 \left[ \lambda_0 \cosh(\sigma_0 t) - \cos(k_0x) \right] - 2i k_0 \sinh(\sigma_0 t).
\end{eqnarray*}
Then, by using Pair II in Table \ref{table-1} with the $L$-antiperiodic eigenfunction $\varphi_0$ in (\ref{norming-factor}) and the non-periodic solution $\phi_0$ in (\ref{non-periodic-vector}), we obtain 
the following two non-periodic solutions of the linearized NLS equation (\ref{nls-lin}):
	\begin{eqnarray}
	\label{w3}
	&& w_3(x,t) = -4 \lambda_0 x w_1(x,t) + 4(1-2\lambda_0^2) t w_2(x,t) + g_1(x,t), \\
	\label{w4}
	&& w_4(x,t) = -4 \lambda_0 x w_2(x,t) - 4(1-2\lambda_0^2) t w_1(x,t) + g_2(x,t), 
	\end{eqnarray}
	where the periodic parts $g_1$ and $g_2$ are given by 
	\begin{eqnarray*}
 g_1(x,t)& = & 
4 k_0^{-1} \left[ \cosh(\sigma_0 t) \sin(k_0 x) w_1(x,t) + \sinh(\sigma_0 t) \cos(k_0 x) w_2(x,t) \right] \\
	&&  + \lambda_0 
	\frac{2 \lambda_0 \cosh(\sigma_0 t) - 2 \cos(k_0x) - i k_0 \sinh(\sigma_0 t)}{\cosh(\sigma_0 t) - \lambda_0 \cos(k_0x)}
	\end{eqnarray*}
and 
	\begin{eqnarray*}
g_2(x,t) = 
4 k_0^{-1} \left[ \cosh(\sigma_0 t) \sin(k_0 x) w_2(x,t) - \sinh(\sigma_0 t) \cos(k_0 x) w_1(x,t) \right].
\end{eqnarray*}
Note that the components $g_1$ and $g_2$ are bounded in $t$ as $t \to \pm \infty$. In view of (\ref{v1-w1}), the linear combination 
\begin{eqnarray}
	\label{element-basis}
	v(x,t) = \lambda_0^2 \frac{1-\lambda_0}{1+\lambda_0} v_3(x,t) - w_3(x,t) = t s_0(x,t) + f_0(x,t),
\end{eqnarray}
where $s_0$ and $f_0$ are $L$-periodic in $x$ and bounded as $t \to \pm \infty$, e.g. 
\begin{eqnarray*}
s_0	&=& 2i \lambda_0^2 \left[ 1 - 2 \lambda_0^2 + \frac{ik_0 \lambda_0 \sinh(\sigma_0 t)}{\cosh(\sigma_0 t) - \lambda_0 \cos(k_0 x)} \right. \\
&& \left. + (1-\lambda_0^2) \frac{ik_0 \sinh(\sigma_0 t) \cos(k_0 x) + 2(1 - \lambda_0^2 \cos^2(k_0x))}{\left[ \cosh(\sigma_0 t) - \lambda_0 \cos(k_0x) \right]^2} \right].
\end{eqnarray*}
As $t \to \pm \infty$, the solution $v$ in (\ref{element-basis}) is asymptotically equivalent to the solution $\widetilde{v}_0^-$ in the decomposition  (\ref{v-arbitrary-constant}). 

\begin{remark}
	Since $v_2$ and $w_2$ are not linearly dependent from each other, there is no a linear combination of $v_4$ and $w_4$ which would be $L$-periodic in $x$.
\end{remark}

Finally, by using Pair III in Table \ref{table-1} with the non-periodic solution $\phi_0$ in (\ref{non-periodic-vector}), we obtain two non-periodic solutions which are quadratic in $x$. Again, such quadratic solutions in $x$ play no role in the proof of Theorem \ref{six solutions}.

\subsubsection{Solutions related to the generalized eigenfunction at $\lambda = \lambda_0$}

	With the account of $v_1$, $v_2$, $w_1$, $w_2$, the relation 
	(\ref{v1-w1}), and the linear combination (\ref{element-basis}), it remains to obtain two $L$-periodic solutions 
	of the linearized NLS equation (\ref{nls-lin}) at AB, 
	which would be asymptotically equivalent to the remaining solutions $v_{\lambda(k_1)}^+$ and $v_{-\lambda(k_1)}^+$ in the decomposition (\ref{v-arbitrary-constant}). These solutions will be constructed from  linear combinations of non-periodic solutions that grows linearly in $x$, just as the solution $v$ in (\ref{element-basis}).

By Lemma~\ref{lax antiper}, the eigenvalue $\lambda_0$ is algebraically double with the generalized eigenfunction $\varphi_1$ given by  
(\ref{gen-eigenvector}) in addition to the eigenfunction $\varphi_0$ given by (\ref{norming-factor}). It is natural to expect additional solutions for the linearized NLS equation to be obtained from the eigenfunction $\varphi_0$ and the generalized eigenfunction $\varphi_1$. It is surprising, however, that this is not the case. As a result, we have to use the eigenfunction $\varphi_0$ and the generalized eigenfunction $\varphi_1$ together with the non-periodic solutions $\phi_0$ and $\phi_1$ given by (\ref{non-periodic-vector}) 
and (\ref{second-non-periodic-vector}). 

By using the expansion of the $2\times2$ matrix solution $\hat{\Phi}(\lambda)$ computed in Appendix~\ref{a hatphi}, we write
\[
\hat\Phi(\lambda) = \left[ 2ik_0\varphi(\lambda), \phi(\lambda)\right],
\]
where
\begin{eqnarray}
  \label{varphilambda}
  \varphi(\lambda) &=&  \frac{\varphi_0}{\lambda - \lambda_0} 
+ \varphi_1 + \varphi_2 (\lambda - \lambda_0) + \mathcal{O}((\lambda - \lambda_0)^2) ,
\\
\label{philambda}
  \phi(\lambda) &=& \phi_0 + \phi_1 (\lambda - \lambda_0) + \mathcal{O}((\lambda - \lambda_0)^2).
\end{eqnarray}
Here $\varphi_0$, $\varphi_1$, and $\varphi_2$ are given by (\ref{norming-factor}), (\ref{gen-eigenvector}), and \eqref{second-generalized-eigenvector}, 
whereas $\phi_0$ and $\phi_1$ are given by (\ref{non-periodic-vector}) and \eqref{second-non-periodic-vector}. 

Both columns of $\hat\Phi(\lambda)$ being solutions of the Lax system (\ref{lax-1})--(\ref{lax-2}), the three pairs in Table~\ref{table-1} give solutions $v(\lambda)$ of the linearized NLS equation (\ref{nls-lin}) at AB. Expanding $v(\lambda)$ at $\lambda=\lambda_0$ generates a set of possible solutions to the linearized NLS equation (\ref{nls-lin}) at AB. It turns out that the $L$-periodic solutions and the linearly growing in $x$ solutions obtained from Pair I  in Table \ref{table-1} are all linear combinations of the previously obtained solutions and that the solutions obtained from Pair III in Table \ref{table-1} are all at least quadratic in $x$. As a result, the new suitable solutions to the linearized NLS equation (\ref{nls-lin}) must be obtained by using Pair II in Table \ref{table-1}.

Using Pair II in Table \ref{table-1} with the two columns in the matrix $\hat{\Phi}(\lambda)$ expanded at $\lambda=\lambda_0$, we obtain the following expansions
\begin{equation}
\label{expansion-v}
v = (2ik_0) \left[ \frac{w_{\pm}}{\lambda - \lambda_0} + v_{\pm} + \mathcal{O}(\lambda - \lambda_0) \right],
\end{equation}
where each term of the expansion gives a solution $v$ to the linearized NLS equation (\ref{nls-lin}) at AB. It follows that $w_+ = w_3$ and $w_- = w_4$ were previously obtained in (\ref{w3}) and (\ref{w4}) respectively. The next corrections in (\ref{expansion-v}) give two new solutions:
\begin{eqnarray*}
	v_+ &=& \varphi_{0,1} \phi_{1,1} - \bar{\varphi}_{0,2} \bar{\phi}_{1,2} + \varphi_{1,1} \phi_{0,1} - \bar{\varphi}_{1,2} \bar{\phi}_{0,2}, \\
	v_- &=& i \varphi_{0,1} \phi_{1,1} + i \bar{\varphi}_{0,2} \bar{\phi}_{1,2} + i \varphi_{1,1} \phi_{0,1} + i \bar{\varphi}_{1,2} \bar{\phi}_{0,2},
\end{eqnarray*}
where the first subscript stands for $\varphi_0$, $\varphi_1$, $\phi_0$, and $\phi_1$ and the second subscript stands for the first and second components of the $2$-vectors. For further computations of $v_\pm$ we obtain
\begin{eqnarray*}
p_0 p_+(\lambda_0) + \bar{q}_0 \bar{q}_+(\lambda_0) &= &2 \cos(k_0x) \left[ 2 \lambda_0 \sinh(\sigma_0 t) - i k_0 \cosh(\sigma_0 t) \right], \\
p_0 p_+(\lambda_0) - \bar{q}_0 \bar{q}_+(\lambda_0)& =& -2 \sin(k_0x) \left[ k_0 \sinh(\sigma_0 t) + 2 i \lambda_0 \cosh(\sigma_0 t) \right].
\end{eqnarray*}
After substitution of (\ref{norming-factor}), (\ref{gen-eigenvector}), 
(\ref{non-periodic-vector}), and (\ref{second-non-periodic-vector}) into $v_{\pm}$, we obtain
\begin{eqnarray*}
	v_{\pm}(x,t) = x r_{\pm}(x,t) + t s_{\pm}(x,t) + f_{\pm}(x,t),
\end{eqnarray*}
where the $L$-periodic parts are computed explicitly:
\begin{eqnarray*}
	r_+ &=& -\frac{8}{k_0^2} (3 - 2 \lambda_0^2) w_1, \\ 
	r_- &=& -\frac{8}{k_0^2} (1 - 4 \lambda_0^2) w_2 +
	\frac{2\lambda_0^2}{(1+\lambda_0)^2}v_2,
\end{eqnarray*}
\begin{eqnarray*}
	s_+ &=& \frac{4}{k_0} (1-2\lambda_0^2) (\hat{p}_0 p_+(\lambda_0) - \bar{\hat{q}}_0 \bar{q}_+(\lambda_0)) + \frac{16}{k_0^2} (1-2\lambda_0^2) \sinh(\sigma_0t) \sin(k_0 x)  w_1 \\	
	&&  +\frac{8}{k_0^2} (1-2\lambda_0^2) \left( 2 \cosh(\sigma_0t) \cos(k_0 x) - \lambda_0 \right) w_2 - 8 \lambda_0  w_2, \\[1ex]
	s_- &=& \frac{4i}{k_0} (1-2\lambda_0^2) (\hat{p}_0 p_+(\lambda_0) + \bar{\hat{q}}_0 \bar{q}_+(\lambda_0)) + \frac{16}{k_0^2} (1-2\lambda_0^2) \sinh(\sigma_0t) \sin(k_0 x)  w_2 \\	
	&&  -\frac{8}{k_0^2} (1-2\lambda_0^2) \left( 2 \cosh(\sigma_0t) \cos(k_0 x) - \lambda_0 \right) w_1 + 8 \lambda_0  w_1,
\end{eqnarray*}
and
\begin{eqnarray*}
	f_+ &=& \frac{1}{2i k_0} \left( p_0 p_+(\lambda_0) + \bar{q}_0 \bar{q}_+(\lambda_0)\right) + 
	\frac{i}{k_0} (\hat{p}_0 p_+(\lambda_0) - \bar{\hat{q}}_0 \bar{q}_+(\lambda_0)) \\
	&& + \frac{2}{k_0^2} \left( \cosh(\sigma_0 t) \cos(k_0 x) - \lambda_0 \right) (p_0 \hat{p}_0 - \bar{q}_0 \bar{\hat{q}}_0) - \frac{2i}{k_0^2} \sinh(\sigma_0 t) \sin(k_0 x)  (p_0 \hat{p}_0 + \bar{q}_0 \bar{\hat{q}}_0)\\	
	&& + \frac{2}{k_0^2} \sinh(\sigma_0 t) \cos(k_0 x) (\hat{p}_0 p_+(\lambda_0) - \bar{\hat{q}}_0 \bar{q}_+(\lambda_0))
	\\ && - \frac{2i}{k_0^2} \cosh(\sigma_0 t) \sin(k_0 x) (\hat{p}_0 p_+(\lambda_0) + \bar{\hat{q}}_0 \bar{q}_+(\lambda_0)) \\
	&&  + \frac{4}{k_0^3} 
	\cosh(2 \sigma_0t) \sin(2 k_0 x)  w_1  + \frac{4}{k_0^3} \sinh(2 \sigma_0t) \cos(2 k_0 x)  w_2, \\[1ex]
	f_- &=& \frac{1}{2 k_0} \left( p_0 p_+(\lambda_0) - \bar{q}_0 \bar{q}_+(\lambda_0)\right) -
	\frac{1}{k_0} (\hat{p}_0 p_+(\lambda_0) + \bar{\hat{q}}_0 \bar{q}_+(\lambda_0)) \\
	&& + \frac{2i}{k_0^2} \left( \cosh(\sigma_0 t) \cos(k_0 x) - \lambda_0 \right) (p_0 \hat{p}_0 + \bar{q}_0 \bar{\hat{q}}_0)
       + \frac{2}{k_0^2} \sinh(\sigma_0 t) \sin(k_0 x)  (p_0 \hat{p}_0 - \bar{q}_0 \bar{\hat{q}}_0)\\	
	&& + \frac{2i}{k_0^2} \sinh(\sigma_0 t) \cos(k_0 x) (\hat{p}_0 p_+(\lambda_0) + \bar{\hat{q}}_0 \bar{q}_+(\lambda_0))
	\\ && 	 + \frac{2}{k_0^2} \cosh(\sigma_0 t) \sin(k_0 x) (\hat{p}_0 p_+(\lambda_0) - \bar{\hat{q}}_0 \bar{q}_+(\lambda_0)) \\
	&&  + \frac{4}{k_0^3} \cosh(2 \sigma_0t) \sin(2 k_0 x)  w_2
         - \frac{4}{k_0^3} \sinh(2 \sigma_0t) \cos(2 k_0 x)  w_1.
\end{eqnarray*}
The $x$-growing part of $v_+$ is cancelled in the linear combination
\begin{eqnarray}
\nonumber
v(x,t) &=& k_0^2 v_+(x,t) - \frac{2 \lambda_0 (3-2\lambda_0^2)(1-\lambda_0)}{1+\lambda_0} v_3(x,t) \\
\label{new+}
&=&  t s_1(x,t) + k_0^2 f_+(x,t) - \frac{2 \lambda_0 (3-2\lambda_0^2)(1-\lambda_0)}{1+\lambda_0} f_1(x,t),
\end{eqnarray}
where
\begin{eqnarray*}
s_1(x,t) &=& 4i \lambda_0 (7-10\lambda_0^2)  
\frac{(2 \lambda_0^2-1) \cosh(\sigma_0 t)-i\lambda_0k_0 \sinh(\sigma_0 t) - \lambda_0 \cos(k_0 x)}{\cosh(\sigma_0 t) - \lambda_0 \cos(k_0 x)},
\end{eqnarray*}
and $f_+(x,t)$, $f_1(x,t)$ are $L$-periodic in $x$. 
Note that $t s_1(x,t)$ is irreducible in the sense that there are no other solutions of the linearized NLS equation (\ref{nls-lin}) with the same behavior as $t s_1(x,t)$. On the other hand, $s_1(x,t)$ and $f_1(x,t)$ are bounded as $t \to \pm \infty$, whereas $k_0^2 f_+(x,t)$ is unbounded. As $t \to \pm \infty$, we deduce that the exponentially growing component of $v(x,t)$ is given by 
\[
v(x,t) \sim -4 k_0^2 (1 - 4 \lambda_0^2) \cosh(\sigma_0 t) \cos(k_0 x) 
- 8 i \lambda_0 k_0 (3 - 4 \lambda_0^2) \sinh(\sigma_0 t) \cos(k_0x).
\]
We conclude that the solution $v$ in \eqref{new+} is $L$-periodic and asymptotically equivalent to the mode $v_{-\lambda(k_1)}^+$ in the decomposition (\ref{v-arbitrary-constant}) as $t \to \pm \infty$.

The $x$-growing part of $v_-$ is cancelled in the linear combination
\begin{eqnarray}
\nonumber
  v(x,t) &=& k_0^2 v_-(x,t) - \frac{2 (1-4\lambda_0^2)}{\lambda_0} w_4(x,t)
  -\frac{8\lambda_0^2 (1-\lambda_0)}{1+\lambda_0} v_4(x,t) \\
  \label{new-}
  &=& t s_2(x,t) +  k_0^2 f_-(x,t) 
  - \frac{2 (1-4\lambda_0^2)}{\lambda_0} g_2(x,t) -\frac{8\lambda_0^2 (1-\lambda_0)}{1+\lambda_0} f_2(x,t),
\end{eqnarray}
where
\begin{eqnarray*}
s_2(x,t) = -\lambda_0 k_0^2 (2\lambda_0^2+1)  
\frac{\sin(k_0 x)[k_0 \cosh(\sigma_0 t)+2i\lambda_0 \sinh(\sigma_0 t)]}{[\cosh(\sigma_0 t) - \lambda_0 \cos(k_0 x)]^2},
\end{eqnarray*}
and $f_-(x,t)$, $f_2(x,t)$, $g_2(x,t)$ are $L$-periodic in $x$. 
Again, the solution $v(x,t)$ grows exponentially in time as $t \to \pm \infty$ due to the unbounded component $k_0^2 f_-(x,t)$, according to 
$$
v(x,t) \sim -4 k_0^2 (1 - 4 \lambda_0^2) \sinh(\sigma_0 t) \sin(k_0 x) 
- 8 i \lambda_0 k_0 (3 - 4 \lambda_0^2) \cosh(\sigma_0 t) \sin(k_0x).
$$
We conclude that the solution $v$ in \eqref{new-} is $L$-periodic and asymptotically equivalent to the mode $v_{\lambda(k_1)}^+$ in the decomposition (\ref{v-arbitrary-constant}) as $t \to \pm \infty$. This completes the proof of the theorem.

\subsubsection{Solutions related to other eigenvalues}

We conclude this section with some comments on the solutions generated by the remaining eigenvalues in the Lax spectrum. These are the geometrically double eigenvalues $\{ \lambda_m^{(P)} \}_{m \in \mathbb{N}_{\rm even}}$ for $L$-periodic solutions and $\{ \lambda_m^{(A)}\}_{m \in \mathbb{N}_{\rm odd} \backslash \{1\}}$ for $L$-antiperiodic solutions. We exclude the case $L \in \pi\mathbb N$ when  $0$ is an eigenvalue of higher algebraic multiplicity and the two eigenfunctions alone were not enough to obtain the decomposition (\ref{v-arbitrary-constant}) for the constant solution~$u=1$. Then, using Pairs I and III in Table \ref{table-1}, from the associated eigenfunctions we obtain $L$-periodic solutions of the linearized NLS equation (\ref{nls-lin}) at AB which are asymptotically equivalent to the solutions $\{ v_{\pm \lambda(k_m)}^\pm \}_{m\in\mathbb N\setminus\{0,1\}}$, in the decomposition (\ref{v-arbitrary-constant}). Pair II in Table \ref{table-1} generates two $L$-periodic solutions which are linear combinations of the solutions $v_1$, $v_2$, and $w_2$ from Theorem~\ref{six solutions}. Together with the other three solutions from Theorem~\ref{six solutions}, the resulting set of solutions is asymptotically equivalent to the one in the  decomposition (\ref{v-arbitrary-constant}). While we do not attempt to prove completeness of this set, we refer to \cite[Section 4]{GS-2021} for a recent discussion of this question.

\section{Kuznetsov--Ma breather (KMB)} \label{s KMB}

Here we apply the same procedure of Section \ref{s AB} for KMB. Since KMB is localized in $x$, we have to consider the Lax spectrum and bounded solutions of the linearized NLS equation in the function space $L^2(\mathbb R)$. 

Let $\lambda_0 \in (1,\infty)$ and define 
the particular solution $\varphi = (p_0,q_0)^T$ of the Lax system (\ref{lax-1})--(\ref{lax-2}) with $u = 1$ and $\lambda = \lambda_0$:
\begin{equation}
\label{KMB-eigen}
\left\{ \begin{array}{l} 
\displaystyle
p_0(x,t) = \sqrt{\lambda_0 + \frac{1}{2} \beta_0} \; e^{\frac{1}{2} (\beta_0 x + i \alpha_0 t)} - \sqrt{\lambda_0 - \frac{1}{2} \beta_0} \; 
e^{-\frac{1}{2} (\beta_0 x + i \alpha_0 t)}, \\
\displaystyle
q_0(x,t) = -\sqrt{\lambda_0 - \frac{1}{2} \beta_0} \; e^{\frac{1}{2} (\beta_0 x + i \alpha_0 t)} + \sqrt{\lambda_0 + \frac{1}{2} \beta_0} \; 
e^{-\frac{1}{2} (\beta_0 x + i \alpha_0 t)},
\end{array}
\right.
\end{equation}
where $\beta_0 = 2 \sqrt{\lambda_0^2-1}$ and $\alpha_0 = \lambda_0 \beta_0$. 
Notice that $p_0$ and $q_0$ in (\ref{KMB-eigen}) are related symbolically to the ones for AB in \eqref{AB-eigen} through the equalities $k_0=i\beta_0$ and $\sigma_0=i\alpha_0$. Elementary computations give 
\begin{eqnarray} \nonumber 
&&|p_0|^2 + |q_0|^2 = 4 \left[ \lambda_0 \cosh(\beta_0 x) - \cos(\alpha_0 t) \right]\\  
\label{modKMB}
&&|p_0|^2 - |q_0|^2 = 2\beta_0\sinh(\beta_0 x)\\
\nonumber
&&p_0 \bar{q}_0 = -2 \cosh(\beta_0 x) + 2 \lambda_0 \cos(\alpha_0 t) + i \beta_0 \sin(\alpha_0 t),
\end{eqnarray}
so that the one-fold Darboux transformation (\ref{DT-potential}) yields the formula for KMB:
\begin{equation}
\label{KM-u}
\hat{u}_0(x,t) = - 1 + \frac{2(\lambda_0^2-1) \cos(\alpha_0 t) + i \alpha_0 \sin(\alpha_0 t)}{\lambda_0 \cosh(\beta_0 x) - \cos(\alpha_0 t)}.
\end{equation}
The complementary transformation (\ref{DT-squared}) gives a consistent relation 
\begin{equation*}
|\hat{u}_0(x,t)|^2 = 1 + \alpha_0 \beta_0 \frac{\lambda_0 - \cosh(\beta_0 x) \cos(\alpha_0 t)}{(\lambda_0 \cosh(\beta_0 x) - \cos(\alpha_0 t))^2},
\end{equation*}
which can also be derived from (\ref{KM-u}). KMB is periodic in $t$ with period $T = 2\pi/\alpha_0$ and localized in $x$ with $\lim\limits_{x \to \pm \infty} \hat{u}_0(x,t) =-1$.

\subsection{Lax spectrum at KMB} \label{ss KMB Lax}

As for AB, we use the Darboux matrix (\ref{DT-matrix}) to construct the bounded solutions of the Lax system (\ref{lax-1})--(\ref{lax-2}) with $u = \hat{u}_0$ from the bounded solutions of the Lax system with $u=1$ and then determine the Lax spectrum at KMB in~$L^2(\mathbb R)$. The Lax spectrum $\Sigma_{\rm KMB}$ is shown in Figure~\ref{fig-Lax-3} where the red dots show isolated eigenvalues $\{+\lambda_0,-\lambda_0\}$.

\begin{figure}[htb]
	\begin{center}
		\includegraphics[width=8cm,height=6cm]{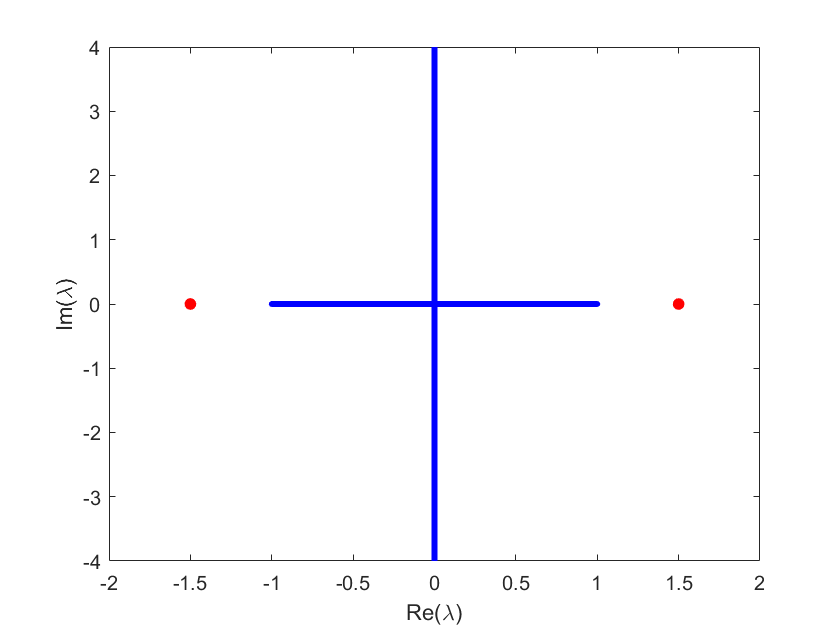}
	\end{center}
	\caption{The Lax spectrum $\Sigma_{\rm KMB}$ in $L^2(\mathbb{R})$ for KMB. }	
	\label{fig-Lax-3}
\end{figure}

The following lemma gives precisely the spectrum $\Sigma_{\rm KMB}$.

\begin{lemma}\label{LaxKMB}
Consider KMB given by (\ref{KM-u}). The spectrum of the ZS spectral problem (\ref{lax-1}) with $u = \hat{u}_0$ in $L^2(\mathbb R)$ is the set  
  \begin{equation}
\label{spectrum-KMB}
\Sigma_{\rm KMB} = i \mathbb{R} \cup [-1,1] \cup \{ \lambda_0, -\lambda_0\},
  \end{equation}
with the following properties:
\begin{enumerate}
\item For each $\lambda \in i \mathbb{R} \cup (-1,1)$, there exist two linearly independent bounded solutions.
\item For $\lambda = 1$ and $\lambda = -1$, there exists only one bounded solution.
\item  The eigenvalues $\lambda = \lambda_0$ and $\lambda = -\lambda_0$ are algebraically simple with associated eigenfunctions $\varphi = (\hat{p}_0,\hat{q}_0)^T$ and $\varphi = (\bar{\hat{q}}_0,-\bar{\hat{p}}_0)^T$ respectively.
  \end{enumerate}
\end{lemma}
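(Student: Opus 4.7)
The plan is to adapt the Darboux strategy of Lemmas~\ref{lax antiper} and~\ref{lax per} to the setting $L^2(\mathbb{R})$, relying on two specific features of KMB: the denominator $|p_0|^2+|q_0|^2=4[\lambda_0\cosh(\beta_0 x)-\cos(\alpha_0 t)]\geq 4(\lambda_0-1)>0$ is uniformly positive in $x$, and $\hat u_0(x,t)\to -1$ as $x\to\pm\infty$, so that the Lax system at $u=\hat u_0$ is asymptotically constant-coefficient with the same asymptotic matrix (up to the sign of $u$) as the Lax system at $u=1$. In particular, the part of the spectrum coming from the behavior at infinity is unchanged and coincides with $i\mathbb{R}\cup[-1,1]$, while any isolated eigenvalues must be exhibited by the Darboux construction.

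First I would verify that the Darboux matrix $D(\lambda)$ in \eqref{DT-matrix} and its inverse are uniformly bounded in $x\in\mathbb{R}$ for every $\lambda\neq\pm\lambda_0$. Every entry of $D(\lambda)$ is, up to a factor $1/(\lambda-\lambda_0)$, of the form $\hat p_0 p_0$, $\hat p_0 q_0$, $\hat q_0 p_0$, or $\hat q_0 q_0$; using \eqref{modKMB} and the algebraic identity $p_0\bar q_0=-2\cosh(\beta_0 x)+2\lambda_0\cos(\alpha_0 t)+i\beta_0\sin(\alpha_0 t)$, each such product has numerator and denominator growing at the same rate $e^{\beta_0|x|}$ at infinity, so the ratio is bounded in $x$. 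Consequently, \eqref{fund-matrix} gives a bijection between bounded (respectively, $L^2(\mathbb{R})$) solutions of the Lax system at $u=1$ and at $u=\hat u_0$ whenever $\lambda\neq\pm\lambda_0$. Combined with the explicit solutions \eqref{eigenvectors-constant-background}--\eqref{eigenvectors-constant-background-two} at $u=1$, this immediately yields properties (1) and (2). Moreover, for $\lambda$ outside $i\mathbb{R}\cup[-1,1]\cup\{\pm\lambda_0\}$, the asymptotic characteristic roots $\pm\sqrt{\lambda^2-1}$ have nonzero real part, so by a Levinson-type argument the two linearly independent solutions of the Lax system at $u=1$ grow exponentially at opposite ends of $\mathbb{R}$; no combination is bounded, and the bijection transports this conclusion to $u=\hat u_0$.

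For the isolated eigenvalue $\lambda=\lambda_0$, formula \eqref{DT-eigen} produces the eigenfunction $\varphi_0=(\hat p_0,\hat q_0)^T$, and an inspection of \eqref{modKMB} shows that both components decay like $e^{-\beta_0|x|/2}$ as $|x|\to\infty$, hence belong to $L^2(\mathbb{R})$. The eigenvalue $-\lambda_0=-\bar\lambda_0$ and its eigenfunction follow from the symmetry in Remark~\ref{remark-symmetry}. Geometric simplicity is a Wronskian argument: since $\mathrm{tr}\,U(\hat u_0,\lambda_0)=0$, the Wronskian of any two solutions of the Lax equation at $\lambda_0$ is $x$-independent; any second, linearly independent solution $\phi$ must then grow like $e^{+\beta_0|x|/2}$ at least at one end in order for $W(\varphi_0,\phi)$ to remain a nonzero constant as $\varphi_0\to 0$ at infinity.

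The main obstacle is algebraic simplicity. I would establish it via the Fredholm condition: since $\lambda_0$ is isolated from the essential spectrum, the operator $\mathcal{L}-\lambda_0 I$ is Fredholm of index zero on the natural domain in $L^2(\mathbb{R})^2$, and a generalized eigenfunction $\varphi_g\in L^2(\mathbb{R})$ solving $(\mathcal{L}-\lambda_0 I)\varphi_g=\varphi_0$ exists if and only if $\langle\varphi_0^*,\varphi_0\rangle_{L^2(\mathbb{R})}=0$, where $\varphi_0^*=(\bar{\hat q}_0,\bar{\hat p}_0)^T$ is the eigenfunction of the adjoint problem. Substituting \eqref{DT-eigen} and the formula for $p_0 q_0$, this inner product reduces to $2\int_{\mathbb{R}}\hat p_0\hat q_0\,dx$, whose integrand decomposes into an odd part in $x$ that vanishes and an even part of the form $(A\cosh(\beta_0 x)+B)/(\lambda_0\cosh(\beta_0 x)-\cos(\alpha_0 t))^2$; this integral is evaluated in closed form, for instance via the substitution $s=\cosh(\beta_0 x)$, and shown to be a nonzero constant independent of $t$, as it must be since the algebraic multiplicity cannot depend on time. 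An alternative route, closer to the proof of Lemma~\ref{lax antiper}, is to build the Laurent expansion of $\hat\Phi(\lambda)$ around $\lambda_0$: at $u=1$ the solutions $(p_0,q_0)^T$ at $\lambda=\lambda_0$ now grow exponentially at both infinities (unlike the oscillatory behavior in the AB case), so the constant term at $\lambda_0$ inherits an exponentially growing contribution that cannot simultaneously be cancelled at both ends of $\mathbb{R}$ by any homogeneous solution, and the candidate generalized eigenfunction therefore fails to lie in $L^2(\mathbb{R})$. In both approaches, the only real labor is the bookkeeping of the explicit computation.
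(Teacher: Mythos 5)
Your proposal follows essentially the same route as the paper: boundedness of the Darboux matrix $D(\lambda)$ and its inverse to transfer the continuous spectrum $\Sigma_0$ from $u=1$ (properties (1) and (2)), exponential decay of $\varphi_0=(\hat p_0,\hat q_0)^T$ plus the constancy of the Wronskian of the trace-free Lax system for geometric simplicity, and the Fredholm solvability condition $\langle\varphi_0^*,\varphi_0\rangle\neq 0$ with $\varphi_0^*=(\bar{\hat q}_0,\bar{\hat p}_0)^T$ for algebraic simplicity, where the paper evaluates the integral explicitly (obtaining $-2\lambda_0\beta_0^{-1}$) exactly as you outline. The extra details you supply (the Levinson-type exclusion of $\lambda\notin\Sigma_0\cup\{\pm\lambda_0\}$ and the alternative Laurent-expansion route) only elaborate on points the paper treats implicitly.
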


\begin{proof}
The Darboux matrix $D(\lambda)$ given by (\ref{DT-matrix}) is invertible for $\lambda\not=\pm\lambda_0$. Moreover, both $D(\lambda)$ and its inverse are bounded in $x$ for $\lambda\not=\pm\lambda_0$. As a result, we have a one-to-one correspondence between the bounded solutions of the Lax systems with $u = 1$ and $u = \hat{u}_0$ for $\lambda\not=\pm\lambda_0$. This implies that, up to the values $\pm\lambda_0$, the ZS spectral problems
(\ref{lax-1}) with $u = 1$ and $u = \hat{u}_0$ have the same continuous spectrum $\Sigma_0$ given by \eqref{spectrum-constant}, so that properties (1) and (2) hold.

It remains to prove (3). Due to the  symmetry property of the Lax system, it is enough to show the result for $\lambda_0$.

The vector $\varphi = (p_0,q_0)^T$ given by \eqref{KMB-eigen} is a solution of the Lax system (\ref{lax-1})--(\ref{lax-2}) with $u = 1$ and $\lambda = \lambda_0$. The Darboux transformation (\ref{DT-eigen}) gives the solution $\varphi = (\hat{p}_0,\hat{q}_0)^T$ of the Lax system (\ref{lax-1})--(\ref{lax-2}) with $u = \hat{u}_0$ and $\lambda = \lambda_0$. From the formulas \eqref{KMB-eigen} and \eqref{modKMB} we find that $\hat{p}_0(x,t),\hat{q}_0(x,t) \to 0$ as $|x| \to \infty$ exponentially fast, hence $\varphi = (\hat{p}_0,\hat{q}_0)^T$ is an eigenfunction in $L^2(\mathbb{R})$ associated with $\lambda_0$. 
Furthermore, the Lax system (\ref{lax-1})--(\ref{lax-2}) having zero-trace, the Wronskian of any two solutions is constant both in $x$ and $t$. Since one solution is decaying to zero as $|x| \to \infty$, another linearly independent solution is necessarily growing at infinity. Consequently, $\lambda_0$ is geometrically simple.

For the algebraic multiplicity, we use the form $\left( \mathcal{L} - \lambda_0 I \right) \varphi_0 = 0$ with the eigenfunction $\varphi_0 = (\hat{p}_0,\hat{q}_0)^T \in L^2(\mathbb{R})$ and show that the linear nonhomogeneous equation 
\begin{equation}
\label{eigen-gen}
\left( \mathcal{L} - \lambda_0 I \right) \psi_0 = \varphi_0,
\end{equation}
does not have the generalized eigenfunction $\psi_0\in L^2(\mathbb R)$. The solvability condition for this equation is given by the Fredholm condition 
$\langle \varphi_0^*, \varphi_0 \rangle = 0$, where 
$\langle \cdot, \cdot \rangle$ is the inner product in $L^2(\mathbb{R})$ and $\varphi_0^*$ is an eigenfunction of the adjoint problem $\left( \mathcal{L}^* - \lambda_0 I \right) \varphi_0^* = 0$. A direct calculation shows that
$\varphi_0^* = (\bar{\hat{q}}_0,\bar{\hat{p}}_0)^T$, and then we compute
\begin{eqnarray*}
\langle \varphi_0^*, \varphi_0 \rangle &=& -\int_{\mathbb{R}} \frac{8 \lambda_0^2 p_0 q_0}{(|p_0|^2 + |q_0|^2)^2} dx \\
&=& \lambda_0^2 \int_{\mathbb{R}} \frac{\cosh(\beta_0 x + i \alpha_0 t) - \lambda_0}{(\lambda_0 \cosh(\beta_0 x) - \cos(\alpha_0 t))^2} dx \\
&=& -\lambda_0 \beta_0^{-1}  \frac{\lambda_0 \sinh(\beta_0 x) + i \sin(\alpha_0 t)}{\lambda_0 \cosh(\beta_0 x) - \cos(\alpha_0 t)} \biggr|_{x \to -\infty}^{x \to +\infty} =-2 \lambda_0 \beta_0^{-1}.
\end{eqnarray*}
Consequently, $\langle \varphi_0^*, \varphi_0 \rangle\not=0$ so that $\lambda_0$ is a simple eigenvalue and property (3) holds.
\end{proof}

\subsection{Linearized NLS equation at KMB} \label{ss KMB LNLS}

As in the case of the constant-amplitude solution $u=1$, we construct bounded solutions of the linearized NLS equation \eqref{nls-lin} at KMB from the bounded solutions of the Lax equations for $\lambda\in\Sigma_{\rm KMB}$ in Lemma~\ref{LaxKMB}. Recall that the solutions are unbounded in $x$ when $\lambda\notin\Sigma_{\rm KMB}$.

Here we focus on solutions generated by the eigenvalue $\lambda_0$, and in particular on those which are decaying to zero as $|x| \to \infty$. The eigenvalue $\lambda = -\lambda_0$ produces the same solutions of the linearized NLS equation, due to the symmetry in Remark \ref{remark-symmetry}. 
The following theorem provides the main result of these computations. 

\begin{theorem}\label{t kmb}
Consider KMB given by (\ref{KM-u}). The eigenvalue $\lambda_0$ of the Lax system generates three linearly independent exponentially decaying solutions of the linearized NLS equation (\ref{nls-lin}). These solutions are proportional to the three derivatives of KMB with respect to $x$, $t$, and $\lambda_0$.
\end{theorem}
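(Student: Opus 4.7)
The plan is to exhibit the three solutions as the partial derivatives $v^{(x)}:=\partial\hat u_0/\partial x$, $v^{(t)}:=\partial\hat u_0/\partial t$, and $v^{(\lambda)}:=\partial\hat u_0/\partial\lambda_0$, verify that each solves the linearized NLS equation \eqref{nls-lin} at $u=\hat u_0$, and then trace each back to the Lax eigenvalue $\lambda_0$ via Table~\ref{table-1}. That $v^{(x)}$ and $v^{(t)}$ solve \eqref{nls-lin} is a direct consequence of the translation invariance of \eqref{nls-u} in $x$ and $t$. For $v^{(\lambda)}$, observe that $\hat u_0(\cdot,\cdot;\lambda_0)$ is a smooth one-parameter family of solutions of \eqref{nls-u}; differentiating that equation with respect to $\lambda_0$ yields exactly \eqref{nls-lin} for $v=v^{(\lambda)}$.

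Exponential decay in $x$ is read off from \eqref{KM-u}: the denominator $\lambda_0\cosh(\beta_0 x)-\cos(\alpha_0 t)$ grows like $\tfrac{\lambda_0}{2}e^{\beta_0|x|}$, hence $\hat u_0+1=O(e^{-\beta_0|x|})$, and the same holds for $v^{(x)}$ and $v^{(t)}$. The derivative $v^{(\lambda)}$ picks up an additional $O(|x|)$ factor from $\partial_{\lambda_0}\beta_0$ acting through $\cosh(\beta_0 x)$, but remains exponentially decaying. For linear independence, evaluate at $t=0$: since $\cos(\alpha_0\cdot 0)=1$ and $\sin(\alpha_0\cdot 0)=0$, direct computation gives that $v^{(x)}(x,0)\in\mathbb R$ is odd in $x$, $v^{(t)}(x,0)\in i\mathbb R$ is even in $x$, and $v^{(\lambda)}(x,0)\in\mathbb R$ is even in $x$. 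The distinct parity and reality types forbid any nontrivial linear dependence at $t=0$, hence at all times.

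To trace the three solutions back to $\lambda_0$, apply Proposition~\ref{prop-NLS-lin}. By Lemma~\ref{LaxKMB}, $\lambda_0$ has eigenfunction $\varphi_0=(\hat p_0,\hat q_0)^T$, and Pair~I of Table~\ref{table-1} produces the solutions $w_1=\hat p_0^{\,2}-\bar{\hat q}_0^{\,2}$ and $w_2=i(\hat p_0^{\,2}+\bar{\hat q}_0^{\,2})$. A direct calculation, parallel to \eqref{w1}--\eqref{w2} in the AB case under the formal substitutions $k_0\mapsto i\beta_0$ and $\sigma_0\mapsto i\alpha_0$, shows that $w_1$ and $w_2$ are nonzero constant multiples of $v^{(x)}$ and $v^{(t)}$, respectively, and both decay exponentially since $\hat p_0,\hat q_0\to 0$ exponentially. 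For the third solution, differentiate the Darboux formula \eqref{DT-potential} with $u_0=1$ with respect to $\lambda_0$: the resulting bilinear expression for $v^{(\lambda)}$ fits Pair~II of Table~\ref{table-1} applied to $\varphi_0$ and a second Lax solution at $\lambda_0$ built from $\partial_{\lambda_0}(\hat p_0,\hat q_0)^T$, which satisfies a non-homogeneous Lax-type equation coming from $\partial_\lambda U=\mathrm{diag}(1,-1)$. The main obstacle is verifying this identification rigorously: since $\varphi_0$ decays like $e^{-\beta_0|x|/2}$ while any second Lax solution at $\lambda=\lambda_0$ grows like $e^{\beta_0|x|/2}$, a generic Pair~II combination is only $O(1)$, so the exponential decay of $v^{(\lambda)}$ reflects a specific cancellation enforced by the $\lambda_0$-dependence of the Darboux transformation and must be verified by explicit computation.
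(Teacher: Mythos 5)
Your verification that the three derivatives $\partial_x\hat u_0$, $\partial_t\hat u_0$, $d\hat u_0/d\lambda_0$ are exponentially decaying solutions of (\ref{nls-lin}) is sound, and your parity/reality argument for their linear independence at $t=0$ is cleaner than anything in the paper, which simply asserts independence. The identification of $\partial_x\hat u_0$ and $\partial_t\hat u_0$ with the Pair~I solutions $w_1=\hat p_0^2-\bar{\hat q}_0^2$ and $w_2=i(\hat p_0^2+\bar{\hat q}_0^2)$ also matches the paper's computations (\ref{mode-1})--(\ref{mode-2}).

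The gap is in tying $d\hat u_0/d\lambda_0$ to the eigenvalue $\lambda_0$, which is precisely the nontrivial content of the theorem (the statement is that the Lax eigenvalue \emph{generates} these solutions, not merely that they exist). Your proposed second Lax solution ``built from $\partial_{\lambda_0}(\hat p_0,\hat q_0)^T$'' does not work as described: since $\lambda_0$ is simultaneously the spectral parameter and the parameter of the breather family, $\partial_{\lambda_0}\varphi_0$ satisfies an inhomogeneous equation whose right-hand side involves not only $\partial_\lambda U=\mathrm{diag}(1,-1)$ but also the variation of $U$ through $\partial_{\lambda_0}\hat u_0$ itself; it is not a solution of the Lax system with $u=\hat u_0$ frozen at $\lambda=\lambda_0$. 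The paper instead constructs the genuine second solution $\phi_0$ at $\lambda=\lambda_0$ explicitly, as the regular part of the Laurent expansion of the Darboux-transformed fundamental matrix $\hat\Phi(\lambda)$ (formula (\ref{growing-vector}), via Remark~\ref{hatphi kmb} and Appendix~\ref{a hatphi}), and then evaluates both Pair~II combinations of $\varphi_0$ and $\phi_0$: one gives $w_3=-4\lambda_0 x\,w_1+4(1-2\lambda_0^2)t\,w_2+f_1$ with $f_1$ exponentially decaying, hence exponentially decaying and equal to a multiple of $d\hat u_0/d\lambda_0$, while the other gives $w_4$, which is only asymptotically constant (see (\ref{limv4})). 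You correctly diagnose that a generic Pair~II product of an $e^{-\beta_0|x|/2}$ eigenfunction with an $e^{\beta_0|x|/2}$ solution is only $O(1)$ and that the decay of the third solution rests on a specific cancellation — the existence of $w_4$ confirms this is not automatic — but you leave exactly that cancellation unverified. To close the argument you would need to write down $\phi_0$ explicitly (or carry out the equivalent residue computation) and check that the particular combination producing $w_3$ has no $O(1)$ tail; this explicit computation is the core of the paper's proof and cannot be bypassed by the symmetry considerations that handle the first two solutions.
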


\begin{remark}
Besides the three exponentially decaying solutions in Theorem \ref{t kmb}, we also find three linearly independent bounded solutions which are asymptotically constant as $x\to\pm\infty$. These six solutions are the analgue for KMB of the six solutions given in Theorem~\ref{six solutions} for~AB.
\end{remark}

\subsubsection{Solutions related to $\lambda= \lambda_0$}

Since $\lambda_0$ is a simple eigenvalue in $\Sigma_{\rm KMB}$, the eigenfunction $\varphi_0 = (\hat{p}_0,\hat{q}_0)^T$ provides an exponentially decaying solution of the Lax system (\ref{lax-1})--(\ref{lax-2}) with $\lambda=\lambda_0$. The second linearly independent solution is exponentially growing in $x$. According to Remark~\ref{hatphi kmb} in Appendix~\ref{a hatphi} this second solution is given by:
\begin{equation}
\label{growing-vector}
\phi_0 = \left[ \begin{array}{c} p_0 \\ q_0 \end{array} \right] + 
4 \left[-\lambda_0 x + i (1-2\lambda_0^2) t + \beta_0^{-1} \sinh(\beta_0 x + i \alpha_0 t) \right] \left[ \begin{array}{c} \hat{p}_0 \\ \hat{q}_0    \end{array} \right].
\end{equation}

By using Pair I in Table \ref{table-1} with $\varphi_0$, we obtain the solutions
\begin{eqnarray}
w_1(x,t) = \hat{p}_0^2 - \bar{\hat{q}}_0^2 = -\frac{\lambda_0^2 \sinh(\beta_0 x) (\beta_0 \cos(\alpha_0 t) + 2 i \lambda_0 \sin(\alpha_0 t))}{2 (\lambda_0 \cosh(\beta_0 x) - \cos(\alpha_0 t))^2} 
\label{mode-1}
\end{eqnarray}
and
\begin{eqnarray}
w_2(x,t) = i\hat{p}_0^2 +i\bar{\hat{q}}_0^2 =\frac{i\lambda_0^2 (2 \lambda_0 \cosh(\beta_0 x) \cos(\alpha_0 t) - 2 + i \beta_0 \cosh(\beta_0 x) \sin(\alpha_0 t))}{2 (\lambda_0 \cosh(\beta_0 x) - \cos(\alpha_0 t))^2},
\label{mode-2}
\end{eqnarray}
which are periodic in $t$ and exponentially decaying in $x$. It turns out that these solutions are proportional to the derivatives of $\hat{u}_0$ with respect to $x$ and $t$, 
\[
w_1= \lambda_0 \beta_0^{-2} \frac{\partial \hat{u}_0}{\partial x},\quad
w_2 = \beta_0^{-2} \frac{\partial \hat{u}_0}{\partial t}.
\]
Hence these solutions are  generated by the symmetries of the NLS equation (\ref{nls-u}) with respect to translation in $x$ and $t$.

While Pair III in Table \ref{table-1} with $\phi_0$ gives exponentially growing solutions, Pair II with $\varphi_0$ and $\phi_0$ gives two bounded solutions:
\begin{eqnarray}
\label{mode-3}
w_3(x,t) &=& -4 \lambda_0 x w_1(x,t) + 4 (1-2\lambda_0^2) t w_2(x,t) + f_1(x,t),\\
\label{mode-4}
w_4(x,t) &=& -4 \lambda_0 x w_2(x,t) - 4 (1-2\lambda_0^2) t w_1(x,t) + f_2(x,t),
\end{eqnarray}
where $w_1$ and $w_2$ are given by \eqref{mode-1} and \eqref{mode-2}, respectively, and
\begin{eqnarray*}
	f_1(x,t) &=& 2 \lambda_0 \cos(\alpha_0 t) \frac{2 \lambda_0 \cos(\alpha_0 t) - (1+\lambda_0^2) \cosh(\beta_0 x)}{\left[ \lambda_0 \cosh(\beta_0 x) - \cos(\alpha_0 t) \right]^2} \\
&& + 4 i \lambda_0 \beta_0^{-1} \sin(\alpha_0 t) \frac{(2\lambda_0^2 -1) \cos(\alpha_0 t)  - \lambda_0^3 \cosh(\beta_0 x)}{\left[ \lambda_0 \cosh(\beta_0 x) - \cos(\alpha_0 t) \right]^2},
\\
f_2(x,t)& =& 4i \lambda_0^2 \beta_0^{-1} \frac{\sinh(\beta_0 x)}{\lambda_0 \cosh(\beta_0 x) - \cos(\alpha_0 t)}.
\end{eqnarray*}
Here $f_1$ is exponentially decreasing as $|x| \to \infty$, whereas $f_2$ is bounded but not decaying as $|x| \to \infty$, and both $f_1$ and $f_2$ are periodic in $t$. Consequently, $w_3$ is also exponentially decaying in $x$, and a direct computation shows that it is proportional to the derivative of $\hat{u}_0$ with respect to $\lambda_0$,
\[
\frac{d \hat{u}_0}{d \lambda_0} = x \beta_0^{-1} \beta_0'(\lambda_0) \frac{\partial \hat{u}_0}{\partial x} + t \alpha_0^{-1} \alpha_0'(\lambda_0) \frac{\partial \hat{u}_0}{\partial t} + \frac{\partial \hat{u}_0}{\partial \lambda_0} = -\lambda_0^{-1} v_3.
\]
In this computation, $\lambda_0$ is an arbitrary parameter and we write $\beta_0 = \beta_0(\lambda_0)= 2 \sqrt{\lambda_0^2 - 1}$ and $\alpha_0=\alpha_0(\lambda_0) = \lambda_0 \beta_0(\lambda_0)$.
The solution $w_4$ is asymptotically constant, with
\begin{equation}\label{limv4}
\lim_{x \to \pm \infty} w_4(x,t) =\pm 4i \lambda_0 \beta_0^{-1}.
\end{equation}
The solutions $w_1$, $w_2$, and $w_3$ are the three linearly independent exponentially decaying solutions in Theorem~\ref{t kmb}.  

\subsubsection{Solutions related to $\lambda \in \Sigma_0$}

First, we consider the solutions of the linearized NLS equation which are asymptotically constant, but not decaying to $0$. These solutions are obtained from Pairs I and II in Table \ref{table-1} for $\lambda=1$ and from Pair II for any $\lambda \in \Sigma_0$. We are looking for a suitably chosen linear combination of these solutions with $w_4$ which might lead to a fourth exponentially decaying solution of the linearized NLS equation (\ref{nls-lin}). We show below that this is not the case.

For $\lambda=1$, the Lax system has the bounded solution $\hat\varphi$ in \eqref{varphi-1} and the unbounded solution $\hat\phi$ in \eqref{varphi-2} in which $(p_0,q_0)$ is given by (\ref{KMB-eigen}).
Using Pair~I of Table~\ref{table-1} with $\hat\varphi$ we find the solutions
\begin{equation}\label{e kmb v1}
  v_1(x,t)=  \frac{2\lambda_0(1+\lambda_0)}{1-\lambda_0}\frac{ \sinh(\beta_0 x) (\beta_0 \cos(\alpha_0 t) + 2 i \lambda_0 \sin(\alpha_0 t))}{[\lambda_0 \cosh(\beta_0 x) - \cos(\alpha_0 t)]^2} 
\end{equation}
and
\begin{eqnarray}
  v_2(x,t)&=&  \frac{2i}{(1-\lambda_0)^2}\left[\frac{-\lambda_0 \cosh(\beta_0 x) +(2\lambda_0^2-1) \cos(\alpha_0 t) + 2 i \lambda_0\beta_0 \sin(\alpha_0 t)}{ \lambda_0 \cosh(\beta_0 x) - \cos(\alpha_0 t)}\right. \nonumber \\
&&\left.+\lambda_0^2\frac{\beta_0^2 \sinh^2(\beta_0 x)+
(-2 \cosh(\beta_0 x) + 2 \lambda_0 \cos(\alpha_0 t) + i \beta_0 \sin(\alpha_0 t))^2}{4 \left[\lambda_0 \cosh(\beta_0 x) - \cos(\alpha_0 t)\right]^2}\right].
\label{e kmb v2}
\end{eqnarray}
The solution $v_1$ is proportional to $w_1$ given in \eqref{mode-1},
\[
v_1 = -\frac{4(1+\lambda_0)}{\lambda_0 (1-\lambda_0)} w_1,
\]
whereas the solution $v_2$ is asymptotically constant with
\[
\lim_{x \to \pm \infty} v_{2}(x,t) =-\frac{2i(1+\lambda_0)}{1-\lambda_0} .
\]
By using Pair~II with $\hat{\varphi}$ and $\hat{\phi}$, 
we find the bounded solution
\begin{equation}\label{e kmb v3}
  v_3(x,t)=  \left(x+\frac12\right)v_{1}(x,t) + t v_{2}(x,t) + g_1(x,t) ,
\end{equation}
where
\begin{eqnarray*}
g_1(x,t)= \frac1{(1-\lambda_0)^2}\left[1+
 \lambda_0^2\frac{\beta_0^2 \sinh^2(\beta_0 x)+
(-2 \cosh(\beta_0 x) + 2 \lambda_0 \cos(\alpha_0 t) + i \beta_0 \sin(\alpha_0 t))^2}{4 \left[\lambda_0 \cosh(\beta_0 x) - \cos(\alpha_0 t)\right]^2}\right] ,
\end{eqnarray*}
and a solution $v_4(x,t)$ which is unbounded in $x$. The solution $v_3$ grows linearly in $t$ and
\[
\lim_{x \to \pm \infty} v_{3}(x,t) = -t\,\frac{2i(1+\lambda_0)}{1-\lambda_0}+\frac{\lambda_0^2}{(1-\lambda_0)^2}.
\]
Pair~III gives two unbounded solutions. The three bounded, but not decaying to $0$, solutions $v_2$, $v_3$, and $w_4$ are linearly independent. Comparing their limits at $x=\pm\infty$ we conclude that there is no linear combination of these solutions which could lead to a localized solution.

Using Pair~II in Table \ref{table-1} with the two  linearly independent solutions of the Lax system for  $\lambda\in i\mathbb R \cup (-1,1)$ we do not find any new solutions. After some computations, we obtain that all these solutions are linear combinations of the exponentially decaying solutions $w_1$, $w_2$, obtained from the eigenvalue $\lambda=\lambda_0$, and the asymptotically constant solution $v_2$ obtained from $\lambda=1$.

The remaining solutions of the linearized NLS equation (\ref{nls-lin}) are obtained using Pairs I and III in Table \ref{table-1} for $\lambda\in i\mathbb R \cup (-1,1)$. Since the Darboux matrix $D(\lambda)$ in (\ref{DT-matrix}) is invertible with constant limits as $x\to\pm\infty$, these solutions are asymptotically the linear combinations of the solutions found for $u = 1$. For $\lambda\in i\mathbb R \cup (-1,1)$  the latter solutions are asymptotically periodic in $x$ with wavenumber $k=k(\lambda)=2\sqrt{1-\lambda^2}>0$. By analogy with the case $u=1$, we denote by  $\hat{v}_{\pm\lambda(k)}^\pm(x,t)$ the four solutions of the linearized NLS equation at KMB for $k \in (0,\infty) \backslash \{2\}$. Although only two linearly independent solutions are obtained for $k = 2$, the point $k = 2$ is of measure zero in the continuous spectrum $\Sigma_{\rm KMB}$.

\subsubsection{Localized solutions}

Based on these explicit computations, we expect that a solution $v \in C^0(\mathbb{R},L^2(\mathbb{R}))$ of the linearized NLS equation (\ref{nls-lin}) at KMB can be uniquely expressed in the linear superposition form 
\begin{eqnarray}
v(x,t)&=& c_1 v_1(x,t) + c_2 v_2(x,t) + c_3 v_3(x,t) \nonumber \\
&&+\int_0^\infty \left[\hat c_k^+  \hat v_{\lambda(k)}^+(x,t) + \hat c_k^-\hat v_{\lambda(k)}^-(x,t)  + \hat c_{-k}^+  \hat v_{-\lambda(k)}^+(x,t) + \hat c_{-k}^-\hat v_{-\lambda(k)}^-(x,t)\right] dk,\quad
\label{v-KMB}
\end{eqnarray}
where the coefficients $c_1$, $c_2$, $c_3$, and $\hat{c}_{\pm k}^\pm$ are uniquely determined by the coefficients by the initial condition $v(\cdot,0)=v_0\in L^2(\mathbb R)$. A rigorous justification of this formula requires an additional completeness proof which is outside the scope of this paper.

\begin{remark}
The decomposition (\ref{v-KMB}) precisely shows how many linearly independent solutions of the linearized NLS equation (\ref{nls-lin}) at KMB correspond to the point and continuous parts of the Lax spectrum $\Sigma_{\rm KMB}$. Interestingly, this decomposition is different from the complete set of solutions of the linearized NLS 
equation at the NLS soliton \cite{Kaup1,Kaup2} where perturbations to a single NLS soliton are decomposed over four exponentially decaying solutions which correspond to translations of the NLS soliton over four parameters and the four continuous families of eigenfunctions of the continuous spectrum. Here, we only found three exponentially decaying solutions. 
\end{remark}

\begin{remark}
	It follows from (\ref{v-KMB}) that the linear instability of KMB is related to the continuous spectrum $\Sigma_0$ in $\Sigma_{\rm KMB}$ with exactly the same growth rate as the one of the constant-amplitude background $u = 1$. This is in agreement with the numerical computation of unstable modes for KMB in \cite{Cuevas}, where KMB was truncated on a spatially periodic domain $[-L,L]$. According to Figs. 1,2, and 3 in \cite{Cuevas}, the number of unstable modes of KMB depends on the period $T$ for every fixed $L$. In the limit $T \to 0$ $(\lambda_0 \to \infty)$, the number of unstable modes corresponds to those of the constant-amplitude background $u = 1$. However, for each fixed $L$, the number of unstable modes decreases as $T$ decreases. Our analysis corresponds to the case $L = \infty$, when the unstable modes form a continuous spectrum which is independent of period~$T$. Indeed, the results in \cite{Cuevas} showed that the number of unstable modes increases when $L$ increases. 
\end{remark}

\section{Conclusion}
\label{sec-conclusion}

We have classified solutions of the linearized NLS equation (\ref{nls-lin}) 
at two breather solutions of the NLS equation (\ref{nls-u}) given by 
AB and KMB. In the case of AB, our results agree with the symbolic computations in \cite{GS-2021} where exponentially growing in time and spatially periodic 
solutions of the linearized NLS equation were discovered. In the case of KMB, 
we provide the set of solutions for characterizing the linear instability of breathers which was not achieved in the previous work \cite{Z} due to lack of spectral mapping properties. In both cases, the question of completeness was left opened and is the future open problem of highest priority.

Among further directions, it is worth mentioning that AB and KMB are particular solutions of the third-order Lax-Novikov equation 
\begin{equation}
\label{LN-3}
u''' + 6 |u|^2 u' + 2i c (u'' + 2|u|^2 u) + 4 b u' + 8i a u = 0,
\end{equation}
for $a = c = 0$. More general solutions of the third-order Lax--Novikov 
equation with $a = c = 0$ are represented by the double-periodic solutions which are periodic both in $x$ and $t$ \cite{Nail1,CPW}. Linear instabilities of the double-periodic solutions were recently explored in \cite{Pel-Double} by utilizying the Floquet theory both in $x$ and $t$. The linear unstable bands 
of the double-periodic solutions should correspond to the linear unstable 
modes of AB and KMB in the case of degeneration of the double-periodic solutions, this limiting procedure is still to be studied in future. 

Overall, characterizing instability of breathers on the constant-amplitude background is a more difficult problem compared to characterizing of the modulation instability of travelling periodic waves in the NLS equation 
\cite{CPW-2020,DecSegal}. Further understanding of the linear and nonlinear instability of breathers will provide better clarity of the formation of complex 
rogue wave patterns and integrable turbulence in the framework of the 
NLS equation (\ref{nls}).

\appendix

\section{Computation of $\hat\Phi(\lambda)$ near $\lambda = \lambda_0$}
\label{a hatphi}

Let $u = \hat u_0$ be AB given by \eqref{AB-u} and consider the $2\times2$ matrix solution ${\Phi}(\lambda)$ to the Lax system (\ref{lax-1})--(\ref{lax-2}) with $u=1$ in the form:
\begin{equation}
\label{fund-matrix-explicit}
\Phi(\lambda) = \left[ \begin{array}{cc} 
p_+(\lambda) & p_-(\lambda) \\
q_+(\lambda) & q_-(\lambda) \end{array}\right],
\end{equation}
where
\begin{eqnarray*}
	p_{\pm}(\lambda) &:=& \sqrt{\lambda - \frac{i}{2} k(\lambda)} \; e^{-\frac{1}{2} i k(\lambda) x + \frac{1}{2} \sigma(\lambda) t} \pm \sqrt{\lambda + \frac{i}{2} k(\lambda)} \; 
	e^{\frac{1}{2} i k(\lambda) x - \frac{1}{2} \sigma(\lambda) t}, \\
	q_{\pm}(\lambda) &:=& -\sqrt{\lambda + \frac{i}{2} k(\lambda)} \; e^{-\frac{1}{2} i k(\lambda) x + \frac{1}{2} \sigma(\lambda) t} \mp \sqrt{\lambda - \frac{i}{2} k(\lambda)} \; 
	e^{\frac{1}{2} i k(\lambda) x - \frac{1}{2} \sigma(\lambda) t},
\end{eqnarray*}
with $k(\lambda) := 2 \sqrt{1-\lambda^2}$, $\sigma(\lambda) := \lambda k(\lambda)$, and $\lambda \in \mathbb{R}$. 

\begin{remark}
The solution (\ref{AB-eigen}) used for the construction of $\hat u_0$ corresponds to the second column of $\Phi(\lambda)$ evaluated at $\lambda = \lambda_0$, so that $p_0 = p_-(\lambda_0)$ and $q_0 = q_-(\lambda_0)$.
\end{remark}

The $2\times2$ matrix solution $\hat{\Phi}(\lambda)$ of the Lax system (\ref{lax-1})--(\ref{lax-2}) with $u=\hat u_0$ is given by the transformation (\ref{fund-matrix}) and (\ref{DT-matrix}), or explicitly by 
\begin{equation}
\hat{\Phi}(\lambda) = \Phi(\lambda) + \frac{1}{\lambda - \lambda_0} \left[ \begin{array}{l} \hat{p}_0 \\ \hat{q}_0 \end{array} \right] \left[ -q_0 \;\; p_0 \right] \Phi(\lambda).
\end{equation}
Expanding $\hat{\Phi}(\lambda)$ into Laurent series near the simple pole at $\lambda = \lambda_0$, we write
\begin{equation}
\label{expansion}
\hat{\Phi}(\lambda) = \frac{\hat{\Phi}_{-1}}{\lambda - \lambda_0} 
+ \hat{\Phi}_0 + \hat{\Phi}_1 (\lambda - \lambda_0) + \mathcal{O}((\lambda - \lambda_0)^2),
\end{equation}
where
\begin{eqnarray*}
\hat{\Phi}_{-1} &:=& \left[ \begin{array}{l} \hat{p}_0 \\ \hat{q}_0 \end{array} \right] \left[ -q_0 \;\; p_0 \right] \Phi(\lambda_0), 	\\
\hat{\Phi}_0 &:=& \Phi(\lambda_0) +  \left[ \begin{array}{l} \hat{p}_0 \\ \hat{q}_0 \end{array} \right] \left[  -q_0 \;\; p_0  \right] \Phi'(\lambda_0), \\
\hat{\Phi}_1 &:=& \Phi'(\lambda_0) + \frac{1}{2} \left[ \begin{array}{l} \hat{p}_0 \\ \hat{q}_0 \end{array} \right] \left[  -q_0 \;\; p_0  \right] \Phi''(\lambda_0).
\end{eqnarray*}
Further expanding the eigenvalue problem
\[
(\mathcal L - \lambda I)\hat\Phi(\lambda) = 0,
\]
into Laurent series near the simple pole at $\lambda = \lambda_0$, we find successively at orders $\mathcal{O}((\lambda - \lambda_0)^{-1})$, $\mathcal O(1)$ and $\mathcal{O}(\lambda - \lambda_0)$ the equalities
\begin{eqnarray}
  \label{eigen pb}
 (\mathcal L - \lambda_0 I)\hat{\Phi}_{-1} = 0 , \quad
  (\mathcal L - \lambda_0 I)\hat{\Phi}_{0} = \hat{\Phi}_{-1} , \quad
  (\mathcal L - \lambda_0 I)\hat{\Phi}_{1} = \hat{\Phi}_{0}.
\end{eqnarray}
We use these equalities to identify the eigenfunctions and the generalized eigenfunctions of the Lax system for $\lambda=\lambda_0$ from the columns of $\hat{\Phi}_{-1}$, $\hat{\Phi}_{0}$, and $\hat{\Phi}_{1}$.

We record the following algebraic computations: 
\begin{eqnarray}
\label{technical-1}
\begin{array}{l} 
-q_0 p_+(\lambda_0) + p_0 q_+(\lambda_0) = 2i k_0, \\
-q_0 p_-(\lambda_0) + p_0 q_-(\lambda_0) = 0, \\
-q_0 p_+(\lambda_0) - p_0 q_+(\lambda_0) = 4 \sinh(\sigma_0 t - i k_0 x), \\ 
-q_0 p_-(\lambda_0) - p_0 q_-(\lambda_0) = 4 \cosh(\sigma_0 t - i k_0 x) - 4 \lambda_0.\end{array}
\end{eqnarray}
By using the first two lines of (\ref{technical-1}),  we obtain
\begin{equation}\label{hatphi-1}
  \hat{\Phi}_{-1} = [2ik_0 \varphi_0,0],
\end{equation}
  where 
\begin{equation}
\label{norming-factor}
\varphi_0 = \left[ \begin{array}{c} \hat{p}_0 \\ \hat{q}_0 \end{array} \right] 
= \frac{\lambda_0}{2 \left[ \cosh(\sigma_0 t) - \lambda_0 \cos(k_0 x) \right] } \left[ \begin{array}{c} -\bar{q}_0 \\ \bar{p}_0 \end{array} \right], 
\end{equation}
is precisely the $L$-antiperiodic eigenfunction of the Lax system (\ref{lax-1})--(\ref{lax-2}) for $u = \hat{u}_0$ and $\lambda = \lambda_0$.

Next, using the equalities
$$
\frac{d}{d \lambda} \sqrt{ \lambda \pm \frac{i}{2} k(\lambda)} = \mp \frac{i}{k} \sqrt{\lambda \pm \frac{i}{2} k(\lambda)}
$$
we compute the derivatives
\begin{eqnarray}
	\label{der-p-q}
	\begin{array}{l}
	p_{\pm}'(\lambda) = k(\lambda)^{-1} \left[ i + 2 i \lambda x + 2 (1-2\lambda^2)t \right] p_{\mp}(\lambda), \\
	q_{\pm}'(\lambda) = k(\lambda)^{-1} \left[ -i + 2 i \lambda x + 2 (1-2\lambda^2)t \right] q_{\mp}(\lambda). \end{array}
\end{eqnarray}
which together with the four equalities of (\ref{technical-1}), lead to
\begin{eqnarray*}
\left[ -q_0 \;\; p_0 \right] \Phi'(\lambda_0) 
= 4ik_0^{-1} \left[ \cosh(\sigma_0 t - i k_0 x) - \lambda_0, k_0( i \lambda_0 x + (1-2\lambda_0^2) t) + \sinh(\sigma_0 t - i k_0 x) \right]. 
\end{eqnarray*}
Then we compute
\begin{equation}\label{hatphi0}
  \hat{\Phi}_{0} = [2ik_0 \varphi_1,\phi_0],
\end{equation}
where
\begin{eqnarray}
\label{gen-eigenvector}
\varphi_1 = \frac{1}{2ik_0} \left[ \begin{array}{c} p_+(\lambda_0) \\
q_+(\lambda_0) \end{array} \right] + 
\frac{2}{k_0^2} \left[ \cosh(\sigma_0 t - i k_0 x) - \lambda_0 \right] \left[ \begin{array}{c} \hat{p}_0 \\ \hat{q}_0   \end{array} \right].
\end{eqnarray}
is the $L$-antiperiodic generalized eigenfunction satisfying $(\mathcal L-\lambda_0 I)\varphi_1=\varphi_0$, and
\begin{equation}
\label{non-periodic-vector}
\phi_0 = \left[ \begin{array}{c} p_0 \\ q_0 \end{array} \right] + 
4 \left[-\lambda_0 x + i (1-2\lambda_0^2) t + i k_0^{-1} \sinh(\sigma_0 t - i k_0 x) \right] \left[ \begin{array}{c} \hat{p}_0 \\ \hat{q}_0    \end{array} \right],	
\end{equation}
is a second linearly independent solution to the Lax system (\ref{lax-1})--(\ref{lax-2}) for $u = \hat{u}_0$ and $\lambda = \lambda_0$, which is not $L$-antiperiodic in $x$. 

Finally, by differentiating (\ref{der-p-q}) in $\lambda$ and using (\ref{technical-1}) we obtain   
\begin{eqnarray*}
-q_0 p_+''(\lambda_0) + p_0 q_+''(\lambda_0) &=& 
2i k_0^{-1} \left[ 4(i \lambda_0 x + (1-2\lambda_0^2) t)^2 - 1 \right]  \\
&& + 16 i k_0^{-2} \left[ i \lambda_0 x + (1-2\lambda_0^2) t \right] 
\sinh(\sigma_0 t - i k_0 x)  \\
&& + 16 i \lambda_0  k_0^{-3} 
\left[ \cosh(\sigma_0 t - i k_0 x) - \lambda_0 \right].
\end{eqnarray*}
and 
\begin{eqnarray*}
	-q_0 p_-''(\lambda_0) + p_0 q_-''(\lambda_0) &=& 
	16 i k_0^{-2} \left[ i \lambda_0 x + (1-2 \lambda_0^2) t \right] \cosh(\sigma_0 t - i k_0 x) \\
	&& + 16 i \lambda_0 k_0^{-3} \sinh(\sigma_0 t - i k_0 x) + 4i (ix - 4 \lambda_0 t).
\end{eqnarray*}
Then we compute
\begin{equation}\label{hatphi1}
  \hat{\Phi}_{1} = [2ik_0 \varphi_2,\phi_1],
\end{equation}
which consists of  non-$L$-antiperiodic functions, 
\begin{eqnarray}
  \nonumber
  \varphi_2 & =& \frac{1}{k_0^2} (\lambda_0 x - i (1-2\lambda_0^2)t) \left[ \begin{array}{c} p_0 \\ q_0 \end{array} \right] + 
\frac{1}{2k_0^2} \left[ \begin{array}{c} p_0 \\
-q_0 \end{array} \right]  \\
\nonumber
&&  
+ \frac{1}{2 k_0^2}\left[ 4(i \lambda_0 x + (1-2\lambda_0^2)t)^2 - 1 \right] \left[ \begin{array}{c} \hat{p}_0 \\ \hat{q}_0   \end{array} \right] \\
		\nonumber
	&& 
+\frac{4}{k_0^3} (i \lambda_0 x + (1-2\lambda_0^2)t) \sinh(\sigma_0 t - i k_0 x) \left[ \begin{array}{c} \hat{p}_0 \\ \hat{q}_0   \end{array} \right] \\
\label{second-generalized-eigenvector}
&&  
+ 
\frac{4 \lambda_0}{k_0^4} \left[ \cosh(\sigma_0 t - i k_0 x) - \lambda_0 \right] \left[ \begin{array}{c} \hat{p}_0 \\ \hat{q}_0   \end{array} \right]. 
\end{eqnarray}
satisfying $(\mathcal L-\lambda_0 I)\varphi_2=\varphi_1$, and
\begin{eqnarray}
\nonumber
	 \phi_1 & =& \frac{2}{k_0} (i\lambda_0 x + (1-2\lambda_0^2)t) \left[ \begin{array}{c} p_+(\lambda_0) \\ q_+(\lambda_0) \end{array} \right] + 
	\frac{i}{k_0} \left[ \begin{array}{c} p_+(\lambda_0) \\
		-q_+(\lambda_0) \end{array} \right] \\
		\nonumber
	&& 
	+\frac{8i}{k_0^2} (i \lambda_0 x + (1-2\lambda_0^2)t) \cosh(\sigma_0 t - i k_0 x) \left[ \begin{array}{c} \hat{p}_0 \\ \hat{q}_0   \end{array} \right] + 
	\frac{8i \lambda_0}{k_0^3} \sinh(\sigma_0 t - i k_0 x)  \left[ \begin{array}{c} \hat{p}_0 \\ \hat{q}_0   \end{array} \right] \\
	&&   
	+ 2i (ix - 4 \lambda_0 t) \left[ \begin{array}{c} \hat{p}_0 \\ \hat{q}_0   \end{array} \right].
	\label{second-non-periodic-vector}
\end{eqnarray}
satisfying $(\mathcal L-\lambda_0 I)\phi_1=\phi_0$.

\begin{remark}\label{hatphi kmb}
These computations can be transferred to KMB given by (\ref{KM-u}) by taking $k_0=i\beta_0$ and $\sigma_0=i\alpha_0$. In particular, we obtain two linearly independent solutions of the Lax system (\ref{lax-1})--(\ref{lax-2}) with $u = \hat{u}_0$ and $\lambda=\lambda_0$ for KMB from (\ref{norming-factor}) and (\ref{non-periodic-vector}). The first one is the solution $\varphi_0 = (\hat{p}_0,\hat{q}_0)^T$ in Lemma~\ref{LaxKMB} and the second one is the solution $\phi_0$ given by (\ref{growing-vector}).
\end{remark}

\begin{remark}
Derivatives of the matrix solution $\hat\Phi(\lambda)$ in $\lambda$ at $\lambda = \pm \lambda_0$  were computed in the recent work \cite{GS-2021} without discussing geometric and algebraic multiplicities of the eigenvalue~$\lambda_0$.
\end{remark}

\end{document}